\providecommand{\U}[1]{\protect\rule{.1in}{.1in}}
\numberwithin{equation}{section}
\newtheorem{theorem}{Theorem}[section]
\newtheorem{lemma}[theorem]{Lemma}
\newtheorem{proposition}[theorem]{Proposition}
\newtheorem{remark}[theorem]{Remark}
\newtheorem{definition}[theorem]{Definition}
\newcommand\supp{\mathrm{supp} \,}
\newcommand\Id{\mathrm{Id}}
\DeclareMathOperator*{\esssup}{ess\,sup}
\def\<{\langle}
\def\>{\rangle}
\def\d{{\rm d}}
\def\curl{{\rm curl \,}}
\def\E{\mathbb{E}}
\def\N{\mathbb{N}}
\def\P{\mathbb{P}}
\def\R{\mathbb{R}}
\def\T{\mathbb{T}}
\def\Z{\mathbb{Z}}
\def\O{\mathcal{O}}
\def\S{\mathcal{S}}
\begin{document}

\title{An elementary approach to mixing and dissipation enhancement by transport noise}
\author{Dejun Luo$^{1,2}$\,\footnote{Email: luodj@amss.ac.cn} \quad Bin Tang$^{3}$\,\footnote{Email: tangbin@math.pku.edu.cn} \quad Guohuan Zhao$^{1,2}$\,\footnote{Email: gzhao@amss.ac.cn} \bigskip \\
{\small $^1$Key Laboratory of RCSDS, Academy of Mathematics and Systems Science,}\\
{\small Chinese Academy of Sciences, Beijing 100190, China} \\
{\small $^2$School of Mathematical Sciences, University of Chinese Academy of Sciences,} \\
{\small Beijing 100049, China}\\
{\small $^3$LMAM, School of Mathematical Sciences, Peking University, Beijing 100871, China}}

\maketitle

\vspace{-15pt}

\begin{abstract}

We investigate the mixing properties of solutions to the stochastic transport equation $\d u= \circ \d W \cdot\nabla u$, where the driving noise $W(t,x)$ is white in time, colored and divergence-free in space. Furthermore, we prove the dissipation enhancement in the presence of a small viscous term. Applying our results, we also derive the mixing properties for a regularized stochastic 2D Euler equation.

\end{abstract}

\textbf{Keywords:} stochastic transport equation, energy spectrum, mixing, dissipation enhancement, regularized stochastic 2D Euler equation

\textbf{MSC (2020):} 60H15, 35R60

\section{Introduction}\label{sec: introduction}

We consider on the torus $\T^d=\R^d/\Z^d\, (d\ge 2)$ the stochastic transport equation
  \begin{equation}\label{STE}
  \d u= \circ \d W \cdot\nabla u,
  \end{equation}
where $\circ\d$ means the Stratonovich stochastic differential, and $W=W(t,x)$ is a space-time noise, white in time, colored and divergence-free in space. Our noise takes the following form
  \begin{equation}\label{intro-noise}
  W(t,x)= \sqrt{C_d \kappa} \sum_{k\in \Z^d_0} \sum_{i=1}^{d-1} \theta_k \sigma_{k,i}(x) W^{k,i}_t,
  \end{equation}
where $C_d=d/(d-1)$, $\kappa>0$ is the noise intensity, and the coefficients $\{\theta_k\}_k\in \ell^2(\Z^d_0)$ are square summable and indexed by nonzero integer points $\Z^d_0$, $\{\sigma_{k,i}\}_{k,i}$ are divergence free vector fields on $\T^d$ and $\{W^{k,i}\}_{k,i}$ are standard independent complex-valued Brownian motions on some probability space $(\Omega, \mathcal F, \P)$, see Section \ref{subsec: noise} below for more details. Using these notations, equation \eqref{STE} can be written more precisely as
  \begin{equation*}
  \d u= \sqrt{C_d \kappa} \sum_{k,i} \theta_k \sigma_{k,i}\cdot\nabla u\circ \d W^{k,i} ,
  \end{equation*}
where $\sum_{k,i}$ stands for $\sum_{k\in \Z^d_0} \sum_{i=1}^{d-1}$. Assuming that the coefficients $\{\theta_k\}_k$ are radially symmetric in $k$ and $\|\theta \|_{\ell^2} =1$, then it is not difficult to show that the above equation has the equivalent It\^o form
  \begin{equation}\label{STE-Ito}
  \d u= \kappa\Delta u\,\d t+ \sqrt{C_d \kappa} \sum_{k,i} \theta_k \sigma_{k,i}\cdot\nabla u\,\d W^{k,i} .
  \end{equation}
We emphasize that the operator $\kappa\Delta$ is a ``fake'' dissipation term, since it is nullified by the martingale component in energy computations.

It is known (see \cite[Section 4.2]{Gal20}) that equation \eqref{STE-Ito} admits a pathwise unique solution $u$ for any $L^2(\T^d)$-initial datum $u_0$, which is assumed to have zero mean on $\T^d$. If the noise $W(t,x)$ is smooth in the spatial variable (e.g. if the coefficients $\{\theta_k \}_k$ decrease fast enough), then the $L^2$-norm of solutions is preserved $\P$-almost surely (abbreviated as $\P$-a.s.). However, as time evolves, the stochastic flow $\{\phi_t\}_{t\ge 0}$ generated by the random field $W(t,x)$ will gradually mix different parts of $\T^d$, heuristically, $u_t= u_0\circ \phi_t^{-1}$ takes both positive and negative values in any small region of $\T^d$. As a result, the solutions $u$ will vanish in the weak sense as $t\to \infty$. Our purpose is to rigorously prove the mixing properties of $u$, both in the average sense and in $\P$-a.s. sense. Moreover, we will prove mixing properties for the regularized stochastic 2D Euler equation by following \cite{BFM11} and using the Girsanov theorem, which relates such equation to \eqref{STE}. We will also establish the property of dissipation enhancement when there is a small dissipation term in \eqref{STE}.

The phenomena of mixing and dissipation enhancement by suitable fluid flows have been studied intensively by many authors, see for instance \cite{ACM19, Constantin, CZDE, FengIyer, GalGub, LTD11, Seis13} and the references therein.
In particular, explicit estimates on dissipation time scales in relation to the mixing rates of fluid flows can be found in \cite{CZDE, FengIyer}. These properties have important applications in fluid mechanics and engineering.
For example, it is shown in \cite{FFGT20} that if a sufficiently mixing advection term is added in the Cahn-Hilliard equation, then there will be no phase separation. Furthermore, strong advections can suppress the occurrence of singularities in models such as the Keller-Segel system \cite{IXZ21} and the Kuramoto-Sivashinsky equation \cite{CZDFM}, cf. \cite{FlGaLu21} for similar results in the stochastic setting. We would like to mention the advanced works \cite{BBPS21, BedroBlum21} by Bedrossion et al., which establish mixing and enhanced dissipation properties by solutions to stochastic Navier-Stokes equations, see \cite{DKK04} for some previous result and \cite{GessYar} for the case of Kraichnan noise.
These results are based on deep analysis of two-point motion of characteristics and a quantitative version of Harris' ergodic theorem. Recently, \cite{CZDG23} gave an explicit exponential mixing rate for self-similarly isotropic random fields.

In recent years, there are also lots of studies of stochastic partial differential equations (SPDEs) with Stratonovich transport noise, inspired by Galeati's work \cite{Gal20}. This work demonstrates that, through a rescaling of space covariance of the noises, the solution of \eqref{STE-Ito} converges to that of the deterministic heat equation.
In other words, an additional diffusion term appears in the limit, which can be interpreted as the eddy dissipation.
For related results on stochastic 2D Euler equations in the white noise regime, refer to \cite{FlLu20}. Results similar to those in \cite{Gal20} have been proved for various fluid dynamics models, see e.g. \cite{FGL} for 2D Euler equations and \cite{Luo21} for the 2D Boussinesq system. Moreover, the first named author of this article, along with collaborators, has improved these works in \cite{FGL21b} by establishing quantitative convergence estimates; 
see \cite{LuoWang23, LuoTang23} for related results on the dyadic model of turbulence and the Leray-$\alpha$ model. The method of \cite{FGL21b} is based on transforming stochastic fluid equations with Stratonovich transport noise into It\^o equations and then writing them in mild form; next, quantitative bounds between solutions of approximative and limiting equations are obtained by combining heat semigroup properties, estimates on the nonlinearities and the Gr\"onwall inequality.
However, a limitation of this method is its applicability only within finite time intervals, as shown in, for example, \cite[Theorem 1.1]{FGL21b}. Specifically, it remained unclear how to extend these estimates to capture mixing properties of solutions to \eqref{STE-Ito} over an infinite time horizon, as discussed following \cite[Theorem 1.7]{FGL21b}. One of the main purposes of the present work is to prove these properties using a completely different 
and relatively elementary approach;
see the discussions below Theorem \ref{thm: mixing-exp} for details.

Before stating our results, we introduce some notations. Given $\theta\in \ell^2(\Z^d_0)$ and $\beta \in \R$, we define the following norm-like quantity
  $$\|\theta \|_{h^\beta}^2 = \sum_k \theta_k^2 |k|^{2\beta}. $$
We will mainly use it for the cases $\beta=\pm 1$. We also introduce the constant
  \begin{equation} \label{def: C-theta-d}
  C(\theta,d)= \begin{cases}
  \pi^2 \|\theta \|_{h^{-1}}^2, & d=2; \\
  \frac{2\pi^2}5 \frac{d}{d-1} \|\theta \|_{h^{-1}}^2, & d\ge 3.
  \end{cases}
  \end{equation}
\subsection{Mixing by transport noise} \label{subsec: intro-mixing}

In this part, we first provide mixing properties of solutions to \eqref{STE} in the average sense.

\begin{theorem} \label{thm: mixing-exp}
Assume $u_0 \in L^2(\T^d)$ and $\theta \in \ell^2(\Z_0^d)$, then the solution $u$ to equation \eqref{STE-Ito} is exponentially mixing in the averaged sense:
\begin{itemize}
\item[\rm (i)] if $0<\beta \leq \frac{d}{4}$, then for any $\epsilon \in \big(0,\frac{\beta(d-2\beta)}{d^2} \big)$, there exists a constant $C_{\beta,\epsilon}$ such that
  $$\E \| u(t) \|_{H^{-\beta}}^2 \leq  C_{\beta,\epsilon} \, e^{-2 \kappa C(\theta,d) \, \big(\frac{\beta(d-2\beta)}{d^2}-\epsilon \big) \, t} \|u_0\|_{L^2}^{2};$$
\item[\rm (ii)] if $\beta > \frac{d}{4}$, there exists a constant $C$ that is only dependent on $\beta$ such that
  $$\E \| u(t) \|_{H^{-\beta}}^2 \leq C \, e^{- \kappa C(\theta,d) \, t/4} \|u_0\|_{L^2}^{2}.$$
\end{itemize}
\end{theorem}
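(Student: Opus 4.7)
The plan is to apply It\^o's formula to $\|u(t)\|_{H^{-\beta}}^2$, expand each term in the Fourier basis, and extract a differential inequality
$$\frac{d}{dt}\E\|u(t)\|_{H^{-\beta}}^2\le -c_{\beta,d,\epsilon}\,\kappa\,\E\|u(t)\|_{H^{-\beta}}^2,$$
so that Gr\"onwall's lemma gives the claimed exponential decay. Writing $u(t,x)=\sum_{n\in\Z^d_0}\hat u_n(t) e^{2\pi in\cdot x}$ and $\sigma_{k,i}(x)=a_{k,i}e^{2\pi ik\cdot x}$ with $\{a_{k,i}\}_{i=1}^{d-1}$ an orthonormal basis of $k^\perp$, It\^o's formula produces a contribution $-2\kappa(2\pi)^2\E\|u\|_{H^{1-\beta}}^2$ from the fake Laplacian, and a quadratic-variation contribution which, after the identity $\sum_i|a_{k,i}\cdot v|^2=|v|^2-(v\cdot k)^2/|k|^2$ and the index shift $n=m+k$, reads
$$C_d\kappa(2\pi)^2\sum_{m,k}\theta_k^2\,|m+k|^{-2\beta}\Bigl(|m|^2-\frac{(m\cdot k)^2}{|k|^2}\Bigr)\E|\hat u_m|^2.$$

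I would next symmetrize $k\leftrightarrow -k$ using the radial symmetry of $\{\theta_k\}$ and split $\tfrac12(|m+k|^{-2\beta}+|m-k|^{-2\beta})=|m|^{-2\beta}+D_k(m)$, where $D_k(m)$ is a symmetric second difference. By the isotropic identity $\sum_k\theta_k^2(|m|^2-(m\cdot k)^2/|k|^2)=\tfrac{d-1}{d}|m|^2$ combined with $C_d=d/(d-1)$, the diagonal piece $|m|^{-2\beta}$ cancels exactly one of the two factors in the fake dissipation, yielding
$$\frac{d}{dt}\E\|u(t)\|_{H^{-\beta}}^2=-\kappa(2\pi)^2\E\|u\|_{H^{1-\beta}}^2+C_d\kappa(2\pi)^2\,\E\sum_{m,k}\theta_k^2\,D_k(m)\Bigl(|m|^2-\frac{(m\cdot k)^2}{|k|^2}\Bigr)|\hat u_m|^2.$$
The main task is then to bound this residual so as to close the inequality in $\|u\|_{H^{-\beta}}^2$. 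I would split the $k$-sum according to the geometry of the pair $(m,k)$: for instance into the regimes $|k|\le|m|$ versus $|k|>|m|$, or where $|m\pm k|$ is small versus bounded away from zero. In each regime, either a Taylor expansion of $|m\pm k|^{-2\beta}$ or a direct pointwise bound on $D_k(m)$ is used, and the radial/spherical averaging of $\{\theta_k\}$ injects the weights $|k|^{-2}$ that produce the norm $\|\theta\|_{h^{-1}}^2$ entering $C(\theta,d)$.

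The main obstacle is to track the sharp combinatorial constant $\beta(d-2\beta)/d^2$ of case (i); I expect this to emerge from optimizing a free parameter arising in the balance between the two regimes, reflecting a trade-off between the decay of low and high Fourier modes, which is then closed by the elementary Poincar\'e-type inequality $\|u\|_{H^{1-\beta}}^2\ge\|u\|_{H^{-\beta}}^2$ (valid since $|n|\ge 1$). The small $\epsilon$-loss accommodates remainders in the asymptotic estimates and the precise choice of splitting scale. For case (ii), the range $\beta>d/4$ lies outside the region where the variational optimum is attained in the interior, so a cruder $\beta$-independent pointwise bound on $D_k(m)$ yields the uniform rate $\kappa C(\theta,d)/4$ directly.
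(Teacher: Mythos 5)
Your core strategy has a genuine gap: the differential inequality $\frac{\d}{\d t}\E\|u(t)\|_{H^{-\beta}}^2\le -c\,\kappa\,\E\|u(t)\|_{H^{-\beta}}^2$ that you aim for is false for general coefficients $\theta\in\ell^2(\Z_0^d)$, because $\E\|u(t)\|_{H^{-\beta}}^2$ need not be monotone decreasing. The paper makes exactly this point in Remark \ref{rem-direct-computation}: for $d=2$, $\beta=1$ and $\supp\theta=\{(\pm1,0),(0,\pm1)\}$, the residual term you call $D_k(m)$ (the symmetric second difference of $|m|^{-2\beta}$) is strictly positive on the diagonal modes $m=(k_0,k_0)$, so if the energy spectrum is concentrated near the diagonal, $\E\|u(t)\|_{H^{-1}}^2$ \emph{increases} over short times. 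Hence no regime-splitting or Taylor expansion of $|m\pm k|^{-2\beta}$ can close the inequality in $\|u\|_{H^{-\beta}}^2$ alone: the sign of the residual genuinely depends on the angular distribution of the energy spectrum relative to $\supp\theta$, and the theorem is claimed for arbitrary radially symmetric $\theta\in\ell^2$. (A direct computation of this kind can work for isotropic, self-similar noises as in \cite{CZDG23}, but not in the stated generality.) Your explanation of the constant $\beta(d-2\beta)/d^2$ as an optimum over splitting scales is also not where it comes from.

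The paper circumvents this by working with a quantity that \emph{is} monotone: the $\ell^p$ norms of the energy spectrum $Y_k=\E|\hat u_k|^2$. Lemma \ref{lem: sum-Fourier-ODE} shows $\frac{\d}{\d t}\sum_k Y_k^p$ equals a negative Dirichlet-form-like expression $-4\pi^2C_d\kappa p\sum_{l,k}\theta_l^2|\Pi_l^\perp k|^2(Y_{k+l}-Y_k)(Y_{k+l}^{p-1}-Y_k^{p-1})$, which is nonpositive regardless of how the spectrum is distributed. The decay rate then follows from a discrete Poincar\'e-type inequality (inequality \eqref{eq: estimate-for-gronwall}), proved by decomposing $\Z_0^d$ into one-dimensional orbits and applying the elementary Lemma \ref{lem: sequence sum} on each; this is where $\|\theta\|_{h^{-1}}^2$ and the constant $C(\theta,d)$ enter. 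Finally, $\E\|u(t)\|_{H^{-\beta}}^2$ is controlled by $\|Y(t)\|_{\ell^p}$ via H\"older's inequality with $q=p/(p-1)$ satisfying $2\beta q>d$, and the exponent $\frac{\beta(d-2\beta)}{d^2}-\epsilon$ in case (i) arises from optimizing $\frac1p(1-\frac1p)$ subject to this summability constraint (with $\frac1p=\frac{d-2\beta}{d}+\epsilon$), while in case (ii) one may simply take $p=2$, giving the rate $\frac14$. If you want to salvage a direct approach you would need to replace $\E\|u\|_{H^{-\beta}}^2$ by a monotone functional of the spectrum; as written, the proposal cannot be completed.
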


Our main strategy for establishing the above results is to carefully analyze the energy spectrum of the solutions, allowing us to infer its exponential decay with time. Let $\{\hat u_k\}_k$ be the Fourier coefficients of $u$; then one can derive the following infinite dimensional ODE (see e.g. \cite[p. 859, Lemma 2]{Gal20} or Proposition \ref{prop: Fourier coefficient} below):
  \begin{equation*}
  \frac{\d}{\d t} \E |\hat{u}_{k}|^2 = 8 \pi^2 C_d \kappa \sum_{l} \theta_{l}^2 |\Pi_l^\perp k|^2 \big(\E |\hat{u}_{k-l}|^2 - \E |\hat{u}_{k}|^2\big), \quad k\in \Z^d_0,
  \end{equation*}
where $\Pi_l^\perp k=k - \frac{k\cdot l}{|l|^2} l$. For simplicity, let $Y_k= \E |\hat{u}_{k}|^2,\, k\in \Z^d_0$; in view of the definition of Sobolev norm $\| u \|_{H^{-\beta}}$ and H\"older's inequality, the estimates in Theorem \ref{thm: mixing-exp} will follow if we can prove the exponential decay of $t\mapsto \sum_{k}  Y_{k}^p(t)$ for suitable $p>1$. Indeed, under some regularity assumptions on $u_0$ and $\theta$, one can show that for any $p> 1$,
  \begin{equation*}
  \frac{\d}{\d t} \sum_{k}  Y_{k}^p = -4 \pi^2 C_{d} \kappa \, p  \sum_{l} \sum_{k} \theta_{l}^2 |\Pi_l^\perp k|^2 \big(Y_{k+l}-Y_{k} \big) \big(Y_{k+l}^{p-1}-Y_{k}^{p-1} \big)
  \end{equation*}
  (see Lemma \ref{lem: sum-Fourier-ODE}).
The structure of the right-hand side of the above identity reminds us of a Poincar\'e-type inequality:
  $$
    \sum_{k}  Y_{k}^p \lesssim \sum_{l} \sum_{k} \theta_{l}^2 |\Pi_l^\perp k|^2 \big(Y_{k+l}-Y_{k} \big) \big(Y_{k+l}^{p-1}-Y_{k}^{p-1} \big),
  $$
 and this suffices for our purpose. Unfortunately, we currently lack the means to directly prove such a result in the $d$-dimensional case. Instead, we decompose the quantity on the right-hand side of the above inequality into a countable sum of 1-dimensional series, and then use the following elementary inequality (see Lemma \ref{lem: sequence sum} below), which we believe to be of independent interest: let $p > 1$ and $\{a_{n}\}_{n \in \N}$ be a non-negative sequence in $\ell^{p}(\N)$, then
  $$\sum_{n \in \N} a_{n}^{p} \leq \frac{2p^2}{p-1} \sum_{n \in \N} (n+1)^2 (a_{n+1}^{p-1}-a_{n}^{p-1}) (a_{n+1}-a_{n}).$$
The readers can refer to Section \ref{subsec: mixing-average} for the details of proof. We would like to mention that deducing the mixing property by direct computations of $\E \| u(t) \|_{H^{-\beta}}^2$, as done in \cite{CZDG23}, may not work for general noises of the form \eqref{intro-noise}; see more discussions in Remark \ref{rem-direct-computation}.

For slightly more regular noises, we can also prove the $\P$-a.s. mixing properties of solutions to equation \eqref{STE-Ito}.

\begin{theorem} \label{thm: mixing-a.s.}
    If initial data $u_0 \in L^2(\T^d)$ and noise coefficient $\theta$ satisfies $\| \theta\|_{h^1}^2 < + \infty$, then for any $\lambda>0$ such that $\lambda/ \kappa$ is less than
    \begin{equation} \label{def: D}
       D(\theta,d) := \pi^2 \| \theta\|_{h^{-1}}^2 \times \begin{cases}
         \frac{1}{4}, & d=2; \\
         \frac{3}{20}, & d=3; \\
         \frac{4}{5} \frac{d-3}{d(d-1)}, & d\geq 4,
    \end{cases}
    \end{equation}
    and $0<q<\frac{D(\theta,d)  \, \kappa}{\lambda}-1$, there is a random constant $C_{\kappa,\theta,d}$ with finite $q$-moment such that
    $$\P \mbox{-a.s.}, \quad \|u(t) \|_{H^{-1}}^2 \leq C_{\kappa,\theta,d} \, e^{-\lambda t} \, \|u_0\|_{L^2}^2, \quad \forall \, t \in [0,+\infty) .$$
\end{theorem}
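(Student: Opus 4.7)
The plan is to apply It\^o's formula to $V_t := \|u(t)\|_{H^{-1}}^2 = \sum_{k\in\Z_0^d}|k|^{-2}|\hat u_k(t)|^2$, establish a \emph{pathwise} Poincar\'e-type inequality for the resulting drift, and extract an $L^q$ control on the supremum via Doob's maximal inequality. Starting from the Fourier-coefficient SDE behind Proposition \ref{prop: Fourier coefficient}, It\^o's formula yields the semimartingale decomposition
\begin{equation*}
\d V_t = B(u_t)\,\d t + \d M_t ,
\end{equation*}
where the pathwise drift $B(u)$ has the same algebraic form as the right-hand side of the ODE recalled in the excerpt, but with $|\hat u_k|^2$ in place of $\E |\hat u_k|^2$, and $M_t$ is a local martingale whose quadratic variation depends on $\|\theta\|_{h^1}^2$. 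This is precisely where the extra regularity assumption $\|\theta\|_{h^1}^2<\infty$ enters.

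The crux is the pathwise Poincar\'e-type inequality
\begin{equation*}
B(u) \le -\,D(\theta,d)\,\kappa\, V_t .
\end{equation*}
Reindexing $k\mapsto k-l$ and then symmetrising $l\mapsto -l$ (using $\theta_{-l}=\theta_l$), together with the identity $C_d\sum_l\theta_l^2|\Pi_l^\perp k|^2 = |k|^2$ coming from radial symmetry and $\|\theta\|_{\ell^2}=1$, the drift may be rewritten as
\begin{equation*}
B(u) = -4\pi^2 C_d\kappa \sum_{k,l}\theta_l^2|\Pi_l^\perp k|^2|\hat u_k|^2\bigl(2|k|^{-2}-|k+l|^{-2}-|k-l|^{-2}\bigr) .
\end{equation*}
The inequality thus reduces to a $k$-by-$k$ lower bound on $\sum_l\theta_l^2|\Pi_l^\perp k|^2\bigl(2|k|^{-2}-|k+l|^{-2}-|k-l|^{-2}\bigr)$ by a multiple of $|k|^{-2}$. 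The weight $|\Pi_l^\perp k|^2$ crucially suppresses the directions $l\parallel k$ along which this symmetric second difference of $x\mapsto|x|^{-2}$ is negative, and the three cases in \eqref{def: D} arise from dimension-dependent pointwise estimates, reflecting the differing sign of the Laplacian of $x\mapsto|x|^{-2}$ in $d=2,3$ versus $d\ge 4$.

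Given the pathwise Poincar\'e, the process $X_t := e^{\lambda t}V_t$ is a non-negative local supermartingale whenever $\lambda<D(\theta,d)\kappa$. To upgrade this to a $q$-th moment bound on $\sup_{t\ge 0}X_t$, I would apply It\^o to $X_t^{1+q}$ and combine Step~2 with a quantitative estimate on $\d[M]_t/\d t$ obtained by bounding each $\|\sigma_{l,i}\cdot\nabla u\|_{H^{-1}}$ and summing against $\theta_l^2$ (this is where the $h^1$ regularity of $\theta$ is used). Under the constraint $(1+q)\lambda<D(\theta,d)\kappa$, the drift of $X_t^{1+q}$ is non-positive, so $X_t^{1+q}$ is itself a non-negative supermartingale.

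Doob's maximal inequality for non-negative supermartingales then yields
\begin{equation*}
\P\Big(\sup_{t\ge 0}X_t^{1+q} > M\Big) \le \frac{V_0^{1+q}}{M} \le \frac{\|u_0\|_{L^2}^{2(1+q)}}{M} \quad (M>0).
\end{equation*}
Setting $C_{\kappa,\theta,d} := \sup_{t\ge 0}X_t/\|u_0\|_{L^2}^2$, this reads $\P(C_{\kappa,\theta,d}>a) \le a^{-(1+q)}$, which immediately gives $\E [C_{\kappa,\theta,d}^q]<\infty$ and the pointwise bound $V_t \le C_{\kappa,\theta,d}\|u_0\|_{L^2}^2 e^{-\lambda t}$. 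The main obstacle I expect is the pathwise Poincar\'e of the second paragraph: the $\ell^p$-Poincar\'e underlying Theorem \ref{thm: mixing-exp} is a statement about the \emph{deterministic} sequence $Y_k$ and cannot be applied realisation-by-realisation, so a genuinely new pointwise estimate on the weighted symmetric second difference of $|k|^{-2}$ is required, which is delicate and presumably accounts for the slightly smaller constant in \eqref{def: D} relative to the decay rate in Theorem \ref{thm: mixing-exp} when $d\ge 4$.
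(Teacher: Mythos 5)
Your proof hinges entirely on the pathwise Poincar\'e inequality $B(u)\le -D(\theta,d)\,\kappa\,\|u\|_{H^{-1}}^2$, reduced to a $k$-by-$k$ lower bound on $\sum_l\theta_l^2|\Pi_l^\perp k|^2\bigl(2|k|^{-2}-|k+l|^{-2}-|k-l|^{-2}\bigr)$. This inequality is not merely ``delicate'': it is false, and the paper's Remark \ref{rem-direct-computation} gives an explicit counterexample. For $d=2$ with $\supp\theta=\{(\pm1,0),(0,\pm1)\}$ and $k=(k_0,k_0)$ on the diagonal, the weighted symmetric second difference has the \emph{wrong} sign (the computation there shows the coefficient of $\int_0^t\E|\hat u_k|^2\,\d s$ in the drift of $\E\|u\|_{H^{-1}}^2$ is bounded below by $(2k_0^2+3)^{-1}>0$), so with initial energy concentrated near the diagonal the $H^{-1}$ norm \emph{increases} for a short time. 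The geometric reason is that the second difference of $x\mapsto|x|^{-2}$ along direction $l$ is positive whenever $\cos^2\angle(k,l)>1/4$, and the weight $|\Pi_l^\perp k|^2$ only kills the exactly-parallel directions. Consequently $e^{\lambda t}\|u(t)\|_{H^{-1}}^2$ is not a supermartingale (its expectation is not even decreasing), and the entire Doob-maximal-inequality architecture collapses. This is also visible in the statement of Theorem \ref{thm: mixing-exp}: the bound is $C\,e^{-\cdot t}\|u_0\|_{L^2}^2$ with a constant $C$ and the $L^2$ norm of $u_0$ on the right, not $\|u_0\|_{H^{-1}}^2$, precisely because the decay is not monotone.

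The paper circumvents this by never working with $\|u\|_{H^{-1}}^2$ directly at the level of the drift. Instead it proves exponential decay of $\|\{\E|\hat u_k(t)|^2\}_k\|_{\ell^p}$ via an orbit decomposition of $\Z_0^d$ and a one-dimensional discrete Poincar\'e inequality (Lemma \ref{lem: sequence sum}), deduces decay of $\E\|u(t)\|_{H^{-1}}^2$ by H\"older, and then handles the supremum in $t$ by a separate short-time maximal estimate: Lemma \ref{lem: uniform-bounded} shows $\E\bigl[\sup_{t\in[0,t_0]}\|u(t)\|_{H^{-1}}^2\bigr]\le 2\|u_0\|_{H^{-1}}^2$ for an explicit $t_0$ via Burkholder--Davis--Gundy (this is where $\|\theta\|_{h^1}<\infty$ is actually used, to control both the drift and the martingale part over a short window rather than to make the drift signed). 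Iterating over intervals $[nt_0,(n+1)t_0]$ with the Markov property and concluding with Chebyshev plus Borel--Cantelli gives both the a.s.\ bound and the $q$-moment of the random constant. If you want to salvage your strategy, you would have to replace the pathwise Poincar\'e step by something of this block-iteration type; as written, Step~2 cannot be repaired.
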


For the proof, we first show that, for a suitable $t_0>0$ and any $n\ge 1$, one has
  $$\E\Big[\sup_{nt_0\le t\le (n+1)t_0} \| u(t) \|_{H^{-1}}^2 \Big] \le 2\, \E\| u(nt_0) \|_{H^{-1}}^2; $$
combined with the estimates in Theorem \ref{thm: mixing-exp}, we get exponential decay rates for the quantity on the left-hand side. Then we can prove the $\P$-a.s. mixing property by following a standard procedure based on the Borel-Cantelli lemma, see Section \ref{subsec: mixing-P-a.s.}.

\subsection{Dissipation enhancement and Capi\'nski's conjecture}

In this part, we consider the following stochastic heat equation:
\begin{equation} \label{eq: stoch-heat}
    \d u =\lambda u \, \d t+  \nu \Delta u \, \d t + \sqrt{C_d \kappa} \sum_{k,i} \theta_{k} \sigma_{k,i} \cdot \nabla u \circ \d W_{t}^{k,i},
\end{equation}
where $\lambda \geq 0$ and $\nu >0$. The energy balance for \eqref{eq: stoch-heat} reads as
\begin{equation} \label{eq: energy-identity-heat}
   \P \mbox{-a.s.}, \quad \frac{\d}{\d t} \|u(t)\|_{L^2}^2 =  2 \lambda \|u(t)\|_{L^2}^2 -2 \nu \|\nabla u(t)\|_{L^2}^2.
\end{equation}
Then $\|u(t)\|_{L^2}^2 \leq \exp \{ (2 \lambda-8 \pi^2 \nu) t \} \|u_0\|_{L^2}^2$. When $\lambda=0$, Flandoli et al. used semigroup theory in \cite[Theorem 1.9]{FGL21b} to show that transport noises enhance the exponential decay rate of $\|u(t)\|_{L^2}^2$, for suitable choices of $\kappa$ and noise coefficient $\theta= \{\theta_k \}_k$.

For $\lambda > 4 \pi^2 \nu$, by simple energy estimate, the solution to deterministic heat equation (i.e. \eqref{eq: stoch-heat} with $\theta_k\equiv 0$) is instable in some low Fourier modes. As mentioned in the acknowledgements of \cite{FlaLuo21}, Capi\'nski conjectured around 1987 that stochastic transport in parabolic PDEs could stabilize such systems, similarly to the theory of stabilization by noise of Arnold et al. \cite{Arnold, ArnoldCW} in the finite dimensional case. Recently, Gess and Yaroslavtsev \cite{GessYar} studied the problem using the ergodicity of two-point motions. However, to apply their theory, one needs to verify the H\"ormander condition and the regularity condition on noise (see details in \cite[Section 3]{GessYar}); the regularity condition requires at least $ \| \theta \|_{h^1}^2<+\infty$.

We provide a different proof of the dissipation enhancement and Capi\'nski's conjecture, under the assumptions $\theta \in \ell^2(\Z_0^d)$ and $u_0 \in L^2(\T^d)$. Recall $D(\theta,d)$ defined in \eqref{def: D}.

\begin{theorem} \label{thm: heat}
Given initial datum $u_0 \in L^2(\T^d)$ and $\theta \in \ell^2(\Z_0^d)$, the solution $u$ to \eqref{eq: stoch-heat} satisfies
  \begin{equation*}
  \E \|u(t)\|_{L^2}^2 \leq C_0 \, \frac{8 \pi^2 \nu+ D(\theta,d)  \, \kappa}{2 \nu } \, e^{-(-2 \lambda + 8 \pi^2 \nu+D(\theta,d)  \, \kappa )  \, t} \|u_0\|_{L^2}^2,
  \end{equation*}
where $C_0 \geq \frac{1}{4 \pi^2}$. For any fixed $\lambda \geq 0$ and $\nu>0$, there exist appropriate noise parameters $(\kappa,\theta)$ such that $ 2 \lambda < 8 \pi^2 \nu+D(\theta,d)\, \kappa$, then the energy $\E \|u(t)\|_{L^2}^2$ decays exponentially.

Furthermore, if $ 2 \lambda < 8 \pi^2 \nu+D(\theta,d)  \, \kappa$, then for any
  $$\gamma\in \big((-2 \lambda + 8 \pi^2 \nu ) \vee 0, -2 \lambda + 8 \pi^2 \nu+D(\theta,d)\, \kappa\big),$$
there is a random constant $C= C(\omega)>0$ with $\E\, C^q <+\infty$ for any
  $$0<q < \min\Big\{\frac{-2 \lambda + 8 \pi^2 \nu+D(\theta,d)  \, \kappa}{\gamma}, \frac{D(\theta,d)  \, \kappa}{\gamma+2 \lambda - 8 \pi^2 \nu} \Big\}-1,$$
such that the solution $u$ satisfies $\P$-a.s.
  $$ \|u(t) \|_{L^2}^2 \leq C \, e^{-\gamma t} \, \|u_0\|_{L^2}^2,  \quad \forall \, t \in [0,+\infty).$$
\end{theorem}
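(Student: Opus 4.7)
The plan is to treat Theorem \ref{thm: heat} as a refinement of Theorem \ref{thm: mixing-exp} in which the parabolic dissipation from $\nu\Delta$ and the noise-induced mixing contribute additively to the decay rate of $\E\|u\|_{L^2}^2$; the almost-sure bound will then be obtained by a Borel--Cantelli argument in the spirit of Theorem \ref{thm: mixing-a.s.}. To start, I would set $v(t)=e^{-\lambda t}u(t)$, which solves
\begin{equation*}
\d v = \nu\Delta v\,\d t + \sqrt{C_d\kappa}\sum_{k,i}\theta_k\sigma_{k,i}\cdot\nabla v\circ \d W^{k,i}_t,\qquad v(0)=u_0,
\end{equation*}
so that it suffices to prove the bound with $\lambda=0$ for $v$ and then multiply through by $e^{2\lambda t}$ to recover the claim for $u$.

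Augmenting the Fourier ODE of Proposition \ref{prop: Fourier coefficient} by the contribution of $\nu\Delta$, the quantities $Y_k(t):=\E|\hat v_k(t)|^2$ satisfy
\begin{equation*}
\frac{\d}{\d t}Y_k = -8\pi^2\nu|k|^2 Y_k + 8\pi^2 C_d\kappa\sum_l\theta_l^2|\Pi_l^\perp k|^2(Y_{k-l}-Y_k).
\end{equation*}
Multiplying by $pY_k^{p-1}$, summing over $k$, and symmetrizing the noise term as in the derivation preceding Lemma \ref{lem: sum-Fourier-ODE} give, for any $p>1$,
\begin{equation*}
\frac{\d}{\d t}\sum_k Y_k^p
= -8\pi^2\nu p\sum_k|k|^2 Y_k^p
- 4\pi^2 C_d\kappa p\sum_l\sum_k\theta_l^2|\Pi_l^\perp k|^2(Y_{k+l}-Y_k)(Y_{k+l}^{p-1}-Y_k^{p-1}).
\end{equation*}
Using $|k|^2\geq 1$ in the first sum, and applying the one-dimensional Poincar\'e-type inequality of Lemma \ref{lem: sequence sum} along each transverse line perpendicular to $l$ in the second sum (this dimensional decomposition is what produces the constant $D(\theta,d)$ of \eqref{def: D}), one obtains a differential inequality of the form
\begin{equation*}
\frac{\d}{\d t}\sum_k Y_k^p \leq -p\bigl(8\pi^2\nu+D(\theta,d)\kappa\bigr)\sum_k Y_k^p
\end{equation*}
for a suitably chosen $p>1$. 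Gr\"onwall yields exponential $\ell^p$-decay; a H\"older interpolation combined with the control of the dissipation term $\sum_k|k|^2 Y_k^p$ and the a priori $\ell^1$-bound from \eqref{eq: energy-identity-heat} transfers this decay to $\sum_k Y_k=\E\|v(t)\|_{L^2}^2$, producing the prefactor $C_0(8\pi^2\nu+D\kappa)/(2\nu)$, where $C_0\geq 1/(4\pi^2)$ reflects the Poincar\'e constant of $-\Delta$ on zero-mean functions on $\T^d$.

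For the almost-sure bound, I would follow the strategy of Theorem \ref{thm: mixing-a.s.}: since \eqref{eq: energy-identity-heat} has no martingale part, the supremum of $\|u(t)\|_{L^2}^2$ over any short window $[nt_0,(n+1)t_0]$ is controlled deterministically by $e^{2\lambda t_0}\|u(nt_0)\|_{L^2}^2$; combining this with the expectation bound, Chebyshev's inequality over a geometrically spaced sequence of events, and the Borel--Cantelli lemma then produce the pathwise decay at any sub-sharp rate $\gamma$, with the moment exponent $q$ determined by tail summability of the resulting geometric series and by the interplay between $\gamma$ and the growth factor $e^{2\lambda t_0}$.

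The main obstacle I anticipate is extracting the precise constant $D(\theta,d)$ with its case distinction at $d=2$, $d=3$ and $d\geq 4$: this hinges on a careful transverse decomposition of the noise double sum so that Lemma \ref{lem: sequence sum} can be applied in each direction orthogonal to $l$, with the exponent $p>1$ selected optimally in each dimensional regime. A secondary subtlety is the $\ell^p\to\ell^1$ passage with the correct prefactor $(8\pi^2\nu+D\kappa)/(2\nu)$, which requires genuinely using the dissipation term $\sum_k|k|^2 Y_k^p$ to control the infinite Fourier support, as a naive Hausdorff--Young-type bound does not suffice.
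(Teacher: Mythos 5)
Your overall architecture is right in outline (a mixing estimate for the stochastic heat equation, then an upgrade to $L^2$ decay, then Borel--Cantelli for the pathwise statement), and the reduction $v=e^{-\lambda t}u$ and the Fourier/$\ell^p$ computation with the extra term $-8\pi^2\nu p\sum_k|k|^2Y_k^p$ match what the paper does in Lemma \ref{lem: heat-mixing} (where $|k|^2\ge 1$ is used exactly as you propose, yielding decay of $\E\|u(t)\|_{H^{-1}}^2$ at rate $-2\lambda+8\pi^2\nu+D(\theta,d)\kappa$ after the H\"older step of Theorem \ref{thm: mixing-exp}). The almost-sure part of your sketch is also essentially the paper's: the pathwise energy identity \eqref{eq: energy-identity-heat} gives deterministic control of $\sup_{t\in[n,n+1]}\|u(t)\|_{L^2}^2$ by $(e^{2\lambda-8\pi^2\nu}\vee 1)\|u(n)\|_{L^2}^2$, and Chebyshev plus Borel--Cantelli with tail estimates on $N(\omega)$ produce the two-term minimum in the admissible range of $q$.

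The genuine gap is the passage from the $\ell^p$ (equivalently $H^{-1}$) decay to decay of $\sum_kY_k=\E\|u(t)\|_{L^2}^2$ itself. You correctly observe that H\"older cannot bound an $\ell^1$ sum by an $\ell^p$ norm and that one must ``genuinely use the dissipation term,'' but you do not supply a mechanism, and staying inside the $\ell^p$ differential inequality will not produce one: the term $\sum_k|k|^2Y_k^p$ controls high frequencies of $\{Y_k^p\}$, not the relation between $\sum_kY_k$ and $\sum_kY_k^p$. The paper's actual device is different and is the heart of the theorem: writing $E(t)=\E\|u(t)\|_{L^2}^2$, the energy balance gives $-\frac{\d}{\d t}\frac{e^{2\lambda t}}{E(t)}=-\frac{2\nu e^{2\lambda t}}{E(t)^2}\,\E\|u(t)\|_{H^1}^2$, and the interpolation $\E\|u\|_{L^2}^2\le(\E\|u\|_{H^1}^2)^{1/2}(\E\|u\|_{H^{-1}}^2)^{1/2}$ converts this into $-\frac{\d}{\d t}\frac{e^{2\lambda t}}{E(t)}\le-\frac{2\nu e^{2\lambda t}}{\E\|u(t)\|_{H^{-1}}^2}$; only now is the mixing Lemma \ref{lem: heat-mixing} inserted, and integrating the resulting inequality for $1/E(t)$ yields the stated bound with prefactor $C_0(8\pi^2\nu+D\kappa)/(2\nu)$ (the condition $C_0\ge 1/(4\pi^2)$ ensures $2\nu/(C_0(8\pi^2\nu+D\kappa))\le 1$ so the ``$-1$'' from the integration can be absorbed; it is not the Poincar\'e constant of $-\Delta$). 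Without this differential-inequality-for-the-reciprocal argument, or an equivalent substitute, your proof of the first display of the theorem does not close. A secondary inaccuracy: the $\ell^p$ inequality you write with rate $p(8\pi^2\nu+D(\theta,d)\kappa)$ overstates what the Poincar\'e lemma gives; the noise contribution at the $\ell^p$ level is $C(\theta,d)\frac{p-1}{p}\kappa$, and $D(\theta,d)$ only emerges after the H\"older interpolation to the $H^{-1}$ norm with the specific choices of $p$ made in Lemma \ref{lem: uniform-decay}.
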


\subsection{Mixing for regularized stochastic 2D Euler equations}

In this subsection, we fix some small $\alpha>0$ and consider the regularized stochastic 2D Euler equation on $\T^2$:
\begin{equation} \label{eq: 2D-Euler-approxiamate}
    \begin{cases}
        \d w^{(\alpha)} + u^{(\alpha)} \cdot \nabla w^{(\alpha)}\,\d t =  \sqrt{2 \kappa} \sum_{k} \theta_{k} \sigma_{k} \cdot \nabla w^{(\alpha)} \circ \d W_{t}^{k} , \\
        u^{(\alpha)} = {\curl}^{-1} (- \Delta/(4 \pi^2) )^{-\alpha/2} w^{(\alpha)},
    \end{cases}
\end{equation}
with initial condition $w^{(\alpha)}_0 \in L^2(\T^2)$, where ${\curl}^{-1}$ is the well-known Biot-Savart operator. Note that in the 2D case, for any $k\in \Z^2_0$, we write the divergence free vector field $\sigma_{k,1}$ simply as $\sigma_k$. If $\alpha=0$, the above system reduces to the stochastic 2D Euler equations with transport noise; considering the equation on the full space $\R^2$ and choosing Kraichnan noise with suitable parameter, Coghi and Maurelli \cite{CogMau23} proved the pathwise uniqueness for stochastic 2D Euler equation with general initial data in $L^p(\R^2)$. It seems that similar result holds for \eqref{eq: 2D-Euler-approxiamate} with $\alpha>0$, but we do not have a direct reference or a simple proof. Nevertheless, taking the noise coefficients
\begin{equation} \label{def: special-noise}
    \theta_k= \theta^{(\alpha)}_{k} = \frac{1}{K_{\alpha}} \frac{1}{|k|^{1+\alpha}} \  ( k \in \Z_0^2), \quad K_{\alpha}=\Big( \sum_{k} \frac{1}{|k|^{2+2\alpha}} \Big)^{1/2}
\end{equation}
and applying the Girsanov theorem, one can show the existence and uniqueness in law of weak solutions to \eqref{eq: 2D-Euler-approxiamate}, cf. Lemma \ref{lem: well-posedness 2D Euler-alpha} below or \cite[Remark 4.10]{GalLuo23} dealing with the logarithmically regularized nonlinearity which is weaker than here. Inspired by \cite{BBF, BFM11}, we will also use Girsanov's theorem to deduce mixing properties of solutions to \eqref{eq: 2D-Euler-approxiamate} from the same assertions of the stochastic transport equation \eqref{STE-Ito}.

\begin{theorem} \label{thm: mixing-Euler}
    Assume $\theta=\{ \theta^{(\alpha)}_{k} \}_k$ in \eqref{eq: 2D-Euler-approxiamate} and fix $R>0$. Given initial data $w_0^{(\alpha)} \in L^2(\T^2)$, let $w^{(\alpha)}$ be the unique energy controlled weak solution (see Definition \ref{def: solution-2D-Euler-approxiamte}) of \eqref{eq: 2D-Euler-approxiamate}. If $\|w^{(\alpha)}_0\|_{L^2}^2 \leq R$, then for any $\lambda>0$, there exist a noise intensity $\kappa(\lambda,R)$ and constant $C>0$ such that
    \begin{equation*}
        \E \|w^{(\alpha)}(t) \|_{H^{-1}}^2 \leq C \, e^{-\lambda t} \, \|w_0^{(\alpha)}\|_{L^2}^2, \quad \forall\, t \geq 0.
    \end{equation*}
\end{theorem}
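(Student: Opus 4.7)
The strategy is to use a Girsanov transform to convert \eqref{eq: 2D-Euler-approxiamate} into the stochastic transport equation \eqref{STE-Ito}, apply the mixing estimate of Theorem \ref{thm: mixing-exp} under the shifted measure, and transfer the result back to $\P$ via H\"older's inequality. The specific profile $\theta^{(\alpha)}_k\sim|k|^{-1-\alpha}$ is tailored precisely so that the nonlinearity $u^{(\alpha)}\cdot\nabla w^{(\alpha)}$ can be absorbed as a Girsanov drift with a uniformly bounded shift. Concretely, expanding $u^{(\alpha)}=\curl^{-1}(-\Delta/(4\pi^2))^{-\alpha/2}w^{(\alpha)}$ in the basis $\{\sigma_k\}_{k\in\Z_0^2}$, I would define adapted shifts $\{h^k_t\}_{k}$ by requiring $\sqrt{2\kappa}\sum_k\theta^{(\alpha)}_k h^k_t\sigma_k=u^{(\alpha)}(t)$. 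Since Biot-Savart contributes a Fourier factor $|k|^{-1}$ and the fractional Laplacian contributes $|k|^{-\alpha}$, the matching with $\theta^{(\alpha)}_k=(K_\alpha|k|^{1+\alpha})^{-1}$ gives
$$\|h(t)\|_{\ell^2}^2\lesssim \frac{K_\alpha^2}{\kappa}\|w^{(\alpha)}(t)\|_{L^2}^2\leq \frac{K_\alpha^2 R}{\kappa},$$
using the energy-control property of weak solutions from Definition \ref{def: solution-2D-Euler-approxiamte}.

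Because this bound is uniform in $t$ and $\omega$, Novikov's criterion is immediate. For each $T>0$ one defines $\widetilde{\P}$ with density $\exp\bigl(\sum_k\int_0^T h^k\,\d W^k-\tfrac12\sum_k\int_0^T|h^k|^2\,\d s\bigr)$, and under $\widetilde{\P}$ the processes $\widetilde{W}^k_t=W^k_t+\int_0^t h^k_s\,\d s$ are independent complex Brownian motions. Substituting $W^k=\widetilde{W}^k-\int h^k\,\d s$ into \eqref{eq: 2D-Euler-approxiamate}, the extra drift cancels the nonlinearity by construction, so $w^{(\alpha)}$ satisfies \eqref{STE-Ito} driven by $\{\widetilde{W}^k\}$ under $\widetilde{\P}$. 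Theorem \ref{thm: mixing-exp}(ii) with $\beta=1$ then gives
$$\widetilde{\E}\|w^{(\alpha)}(t)\|_{H^{-1}}^2\leq C\,e^{-\kappa C(\theta^{(\alpha)},2)\,t/4}\,\|w_0^{(\alpha)}\|_{L^2}^2.$$

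To return to $\P$, for H\"older conjugates $p,q>1$ I would write
$$\E\|w^{(\alpha)}(t)\|_{H^{-1}}^2=\widetilde{\E}\Bigl[\tfrac{\d\P}{\d\widetilde{\P}}\|w^{(\alpha)}(t)\|_{H^{-1}}^2\Bigr]\le\widetilde{\E}\bigl[(\tfrac{\d\P}{\d\widetilde{\P}})^{p}\bigr]^{1/p}\,\widetilde{\E}\bigl[\|w^{(\alpha)}(t)\|_{H^{-1}}^{2q}\bigr]^{1/q}.$$
A standard exponential-martingale computation under $\widetilde{\P}$, together with the uniform bound on $\|h\|_{\ell^2}$, yields $\widetilde{\E}[(\d\P/\d\widetilde{\P})^p]^{1/p}\leq\exp(c_p K_\alpha^2 R\,t/\kappa)$. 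The higher-moment factor is handled by combining the decay above with the pathwise $L^2$-bound $\|w^{(\alpha)}(t)\|_{L^2}^2\leq R$ (valid under $\widetilde{\P}$ since $w^{(\alpha)}$ then solves the transport equation), via the trivial interpolation $\|w^{(\alpha)}\|_{H^{-1}}^{2q}\leq(CR)^{q-1}\|w^{(\alpha)}\|_{H^{-1}}^{2}$, yielding decay of the second factor at rate $\kappa C(\theta^{(\alpha)},2)/(4q)$.

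Combining these estimates produces an overall exponential rate of the form $\kappa C(\theta^{(\alpha)},2)/(4q)-c_pK_\alpha^2 R/\kappa$. Since the first term grows linearly in $\kappa$ while the second decays like $1/\kappa$, for fixed exponents (e.g.\ $p=q=2$) and any target $\lambda>0$ one can choose $\kappa=\kappa(\lambda,R)$ large enough that this rate exceeds $\lambda$, which delivers the claimed bound. The main obstacle I anticipate is the careful bookkeeping required to make the two competing estimates balance: the Girsanov shift depends on the solution itself, so $\d\P/\d\widetilde{\P}$ is a genuine stochastic exponential whose $p$-th moment must be estimated sharply enough to survive the $1/q$-loss from H\"older, and one must verify that the energy control $\|w^{(\alpha)}(t)\|_{L^2}^2\leq R$ genuinely transports across the change of measure (i.e.\ that the reinterpretation of $w^{(\alpha)}$ as a transport solution under $\widetilde{\P}$ still has conserved $L^2$ norm) in order to close the loop.
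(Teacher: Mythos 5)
Your proposal is correct and follows essentially the same route as the paper: the Girsanov shift tailored to $\theta^{(\alpha)}_k=(K_\alpha|k|^{1+\alpha})^{-1}$ absorbs the Biot--Savart nonlinearity, Novikov's criterion holds thanks to the uniform energy bound $\|w^{(\alpha)}(t)\|_{L^2}^2\le R$, Theorem \ref{thm: mixing-exp}(ii) with $\beta=1$ gives the decay under $\widetilde{\P}$, and the transfer back to $\P$ via H\"older (the paper takes $p=q=2$), the exponential-martingale moment bound $\exp(cK_\alpha^2Rt/\kappa)$, and the $\esssup$ interpolation for the higher moment all match, with the same final balancing of the rate $\kappa\,C(\theta,2)/(4q)-c_pK_\alpha^2R/\kappa$ by taking $\kappa$ large. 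The equivalence of $\P$ and $\widetilde{\P}$ on each $\mathcal{F}_t$ (from strict positivity of the stochastic exponential) resolves the transport of the energy control across the change of measure, exactly as you anticipated.
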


We conclude this section with the structure of the paper. In Section \ref{sec: preliminary} we collect some preliminary results which will be used in the sequel. We first prove Theorem \ref{thm: mixing-exp} in Section \ref{subsec: mixing-average}, focusing on the 2D case for its intuitiveness. The more intricate high-dimensional case is deferred to the appendix \ref{appendix: proof}. We will prove Theorem \ref{thm: mixing-a.s.} in Section \ref{subsec: mixing-P-a.s.} and also provide in Section \ref{subsec: positive-norm} some exponential upper bounds on the growth rate of positive Sobolev norms. Sections \ref{sec: heat-equation} and \ref{sec: 2D Euler} are devoted to proving Theorems \ref{thm: heat} and \ref{thm: mixing-Euler}, respectively.

\section{Preliminaries} \label{sec: preliminary}

This section contains some preparatory work. We introduce in Section \ref{subsec: setting} some notations and conventions used throughout the paper, while Section \ref{subsec: noise} is devoted to the precise choice of noise $W(t,x)$ and some previous results on it. In Section \ref{subsec: tool}, we define the solution $u$ of stochastic transport equation \eqref{STE-Ito}, introduce some existing results for solution $u$, and provide a key inequality used in the proof of Theorem \ref{thm: Fourier-coefficient-exp-decay}.

\subsection{Notions and functional setting} \label{subsec: setting}

Let $\T^d=\R^d/\Z^d$ be the $d$-dimensional torus and $\Z_0^d = \Z^d \setminus \{0\}$ be the set of nonzero lattice points. As we assume the noise is spatially divergence-free (see Section \ref{subsec: noise}), the spatial average of all solutions considered in this paper are preserved. Hence, we shall assume for simplicity that the function spaces in this paper consist of functions on $\T^d$ with zero average.

Let $e_k(x):=\exp \{ 2 \pi {\rm i} k \cdot x\}$, for any $k \in \Z_0^d$ and $x \in \T^d$. Then $\{e_k \}_k$ is the usual complex basis of $L^2(\T^d; \mathbb C)$. Let $\hat{u}_{k}=\<u,e_{k}\>$ be the Fourier coefficient of $u$. For any $\beta \in \R$, we define Sobolev space $H^{\beta}(\T^d)$ as
\begin{equation*}
    H^{\beta}(\T^d)=\Big\{ u = \sum_{k \in \Z_0^d} \hat{u}_{k} \, e_k ; \quad  \| u \|_{H^\beta}^2:= \sum_{k \in \Z_0^d} |2 \pi k|^{2\beta} |\hat{u}_k|^2 < +\infty \quad \mbox{and} \quad \overline{\hat{u}_k} =\hat{u}_{-k} \Big\}.
\end{equation*}
Following standard notation, we denote the space $H^{0}(\T^d)$ by $L^2(\T^d)$ with the same norm $\|u\|_{L^2}=\|u\|_{H^0}$ and inner product $\langle u, v\rangle=\sum_{k \in \Z_0^d}\left(\hat{u}_{k} \cdot \hat{v}_{-k}\right)$. Let $C([0,T]; L^2_w(\T^d))$ be the space of functions $f: [0,T] \rightarrow L^2(\T^d)$ which are continuous with respect to the weak topology of $L^2(\T^d)$.

For $1 \leq p < +\infty $, we define $\ell^p (\Z_0^d)$ as the space of $p$-order summable sequences indexed by $\Z_0^d$ with the norm $ \|a\|_{\ell^p}: = \big(\sum_{k \in \Z_0^d} |a_k|^p \big)^{1/p}$ and define $\ell^{\infty} (\Z_0^d)$ as the space of bounded sequences with the norm $ \|a\|_{\ell^{\infty}}: = \sup_{k \in \Z_0^d} |a_k| $. For $\beta \in \R$, we define the norm $\| a \|_{h^{\beta}}^2 = \sum_{k \in \Z_0^d} |k|^{2 \beta} |a_k|^2  $ and the space $h^{\beta}(\Z_0^d)$ as all sequences $\{a_k \}_{k \in \Z_0^d}$ satisfying $\| a\|_{h^{\beta}}^2 < +\infty$. Let $\N:=\{0,1,2, \cdots \}$ be the set of non-negative integers. Similar as $\ell^p (\Z_0^d)$, we define $\ell^{p}(\N)$ with the norm $ \|a\|_{\ell^p}: = \big(\sum_{k \in \N} |a_k|^p \big)^{1/p}$.

For any $k,l \in \Z_0^d$, we define the projection operator $\Pi_l k= \frac{k \cdot l}{|l|^2} l $ and its complement operator $\Pi^{\perp}_l k= k-\frac{k \cdot l}{|l|^2} l$. For any two linearly independent vectors $l_1, l_2 \in \mathbb{Z}_0^d$, we denote the plane spanned by them as
\begin{equation*}
    P(l_1,l_2)=\{l \in \R^d; \quad l=a \, l_1+ b \, l_2, \quad a,b \in \R  \},
\end{equation*}
and define $\Pi_{(l_1,l_2)} m$ as the orthogonal projection of $m \in \mathbb{Z}_0^d$ on the plane $P(l_1, l_2)$. Let $\Id$ be the identity operator, we define $\Pi_{(l_1,l_2)}^{\perp} = \Id - \Pi_{(l_1,l_2)} $. Let $P_Z^{\perp}(l_1,l_2)=\{\Pi_{(l_1,l_2)}^{\perp} m; \, m \in \Z_0^d \}$, then for any $h \in P_Z^{\perp}(l_1,l_2)$, we define the set of integer points based on $h$ as
\begin{equation*}
    \S_h(l_1,l_2)=\{l \in \Z_0^d; \quad l=a \, l_1+ b \, l_2+h, \quad a,b \in \R  \}.
\end{equation*}

\subsection{Choice of noise} \label{subsec: noise}

As mentioned in Section \ref{sec: introduction}, the space-time noise takes the form
\begin{equation} \label{def: noise}
  W(t, x)= \sqrt{C_{d} \kappa}\, \sum_{k \in \Z_0^d} \sum_{i=1}^{d-1} \theta_{k} \sigma_{k, i}(x) W^{k, i}_{t} ,
\end{equation}
where $C_{d}=d/(d-1)$ is a normalizing constant, $\kappa>0$ is the noise intensity, and $\theta \in \ell^{2}(\Z^d_0)$. $\{W^{k, i}:k\in\mathbb{Z}^{d}_{0},  i=1, \ldots, d-1\}$ are standard complex Brownian motions defined on a filtered probability space $(\Omega,  \mathcal F,  \mathcal F_t,  \P)$  satisfying
\begin{equation} \label{noise.1}
    \overline{W^{k, i}} = W^{-k, i},  \quad\big[W^{k, i}, W^{l, j} \big]_{t}= 2t \delta_{k, -l} \delta_{i, j} .
\end{equation}
$\{\sigma_{k, i}: k\in\mathbb{Z}^{d}_{0},  i=1, \ldots, d-1\}$ are divergence-free vector fields on $\T^d$ defined as
\begin{equation*}
    \sigma_{k, i}(x) = a_{k, i} e_{k}(x) ,
\end{equation*}
where $\{a_{k, i}\}_{k, i}$ is a subset of the unit sphere $\mathbb{S}^{d-1}$ such that: i) $a_{k, i}=a_{-k, i}$ for all $k\in \mathbb{Z}^{d}_{0}, \,  i=1, \ldots, d-1$; ii) for fixed $k$, $\{a_{k, i}\}_{i=1}^{d-1}$ is an orthonormal basis of $k^{\perp}=\{y\in\mathbb{R}^{d}:y\cdot k=0 \}$. It holds that $\sigma_{k, i}\cdot \nabla e_k = \sigma_{k, i}\cdot \nabla e_{-k} \equiv 0$ for all $k\in \Z^d_0$ and $1\leq i\leq d-1$. For simplicity, we shall write $\sum_{k \in \Z_0^d}$ and $\sum_{k \in \Z_0^d} \sum_{i=1}^{d-1}$ as $\sum_k$ and $\sum_{k,i}$ in the sequel, respectively. When $d=2$, we will simply write $\sigma_{k,1}$ and $a_{k,1}$ as $\sigma_k$ and $a_k$, respectively.

We shall always assume that $\theta $ is symmetric:
\begin{equation}\label{def: theta_sym}
    \theta_{k}=\theta_{l}, \quad \forall k, l \in \Z_{0}^d \quad \mbox{satisfying} \quad |k|=|l|,
\end{equation}
and $\|\theta \|_{\ell^2} =1$. We define the support of $\theta$ as $\supp \theta := \{ k \in \Z_0^d ; \, \theta_k \neq 0 \}$.

The following useful lemma can be applied to derive the It\^o formulation \eqref{STE-Ito} from the Stratonovich equation \eqref{STE}. In the sequel, it will play roles in the proofs of Lemmas \ref{lem: sum-Fourier-ODE} and \ref{lem: uniform-bounded}, Theorem \ref{thm: positive-sobolev} and so on. The proof of Lemma \ref{lem: matrix-property} can be found in \cite[Section 5.1]{FangLuo18} or \cite[Section 2.3]{Gal20}.

\begin{lemma} \label{lem: matrix-property}
    Let $\{a_{k, i}\}_{k,i}$ be defined as above and $\theta \in \ell^2(\T^d)$ satisfy the symmetry condition \eqref{def: theta_sym}, then we have the following identity
    $$C_{d} \sum_{k,i} \theta_{k}^2 \, a_{k,i} \otimes a_{k,i}= C_{d} \sum_{k} \theta_{k}^2 \left( \Id - \frac{k}{|k|} \otimes \frac{k}{|k|} \right) = \|\theta \|_{\ell^2}^2 \,\Id.$$
\end{lemma}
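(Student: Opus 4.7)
The plan is to prove the two equalities separately; both are purely algebraic and no stochastic analysis enters.

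For the first equality it suffices to verify, for each fixed $k \in \Z_0^d$, the pointwise identity
$$\sum_{i=1}^{d-1} a_{k,i} \otimes a_{k,i} = \Id - \frac{k}{|k|} \otimes \frac{k}{|k|}.$$
This is immediate from the construction of the frame: by hypothesis $\{a_{k,i}\}_{i=1}^{d-1}$ is an orthonormal basis of the hyperplane $k^\perp \subset \R^d$, so the left-hand side is the matrix of the orthogonal projection onto $k^\perp$, which equals $\Id$ minus the projection onto the line $\R k$. Multiplying by $C_d\, \theta_k^2$ and summing in $k$ then yields the first stated equality.

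For the second equality the remaining task is to compute $\sum_k \theta_k^2 \, k \otimes k / |k|^2$. The symmetry assumption \eqref{def: theta_sym} says that $\theta_k^2$ is constant on each sphere $\{k \in \Z_0^d : |k| = r\}$, so I would group terms by $r$ and reduce the question to understanding
$$M_r := \sum_{k \in \Z_0^d,\, |k| = r} \frac{k}{|k|} \otimes \frac{k}{|k|}.$$
Each such integer sphere is invariant under the natural action of coordinate permutations and coordinate sign flips on $\Z_0^d$, so $M_r$ commutes with the corresponding action on symmetric matrices; a short linear-algebra check shows that the only symmetric matrices fixed by all such conjugations are scalar multiples of $\Id$. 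Evaluating $\Tr(M_r) = |\{k \in \Z_0^d : |k|=r\}|$ then pins down the scalar and gives $M_r = (|\{k : |k|=r\}|/d)\, \Id$. Summing over $r$ yields
$$\sum_k \theta_k^2 \, \frac{k \otimes k}{|k|^2} = \frac{1}{d} \sum_k \theta_k^2 \, \Id = \frac{\|\theta\|_{\ell^2}^2}{d} \, \Id.$$

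Substituting this into the first identity reduces the second equality to
$$C_d \, \|\theta\|_{\ell^2}^2 \Big(1 - \frac{1}{d}\Big) \Id = \|\theta\|_{\ell^2}^2 \, \Id,$$
which holds precisely because of the normalization $C_d = d/(d-1)$. I do not foresee a real obstacle: the lemma is essentially a bookkeeping calculation whose purpose is to fix $C_d$ so that the It\^o correction in \eqref{STE-Ito} produces $\kappa\Delta u$ with unit coefficient. The only step deserving some care is the symmetry argument identifying $M_r$ as a multiple of $\Id$, but this is a standard consequence of the invariance of $\Z_0^d$-spheres under the hyperoctahedral group.
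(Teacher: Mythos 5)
Your proof is correct, and it follows essentially the same symmetry argument as the one in the references the paper cites for this lemma (\cite[Section 5.1]{FangLuo18}, \cite[Section 2.3]{Gal20}): the first identity is the completeness relation for the orthonormal frame of $k^\perp$, and the second follows from the invariance of each sphere $\{|k|=r\}$ under signed coordinate permutations, which forces the off-diagonal entries of $\sum_{|k|=r} k\otimes k/|k|^2$ to vanish and the diagonal ones to be equal, with the trace fixing the constant. The only remark worth making is that your argument shows the full radial symmetry \eqref{def: theta_sym} is stronger than needed — invariance of $\theta$ under the hyperoctahedral group already suffices — but this does not affect correctness.
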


\subsection{Basic results for \eqref{STE-Ito} and a discrete Poincar\'e type inequality} \label{subsec: tool}

Firstly, we define the solution to stochastic transport equation \eqref{STE-Ito}, for which the existence and pathwise uniqueness are proved in \cite[Theorems 2 and 3]{Gal20}. Our assumptions in this paper are either identical or stronger than those in \cite{Gal20}, ensuring that the solution $u$ to equation \eqref{STE-Ito} is always existent and unique.

\begin{definition} \label{def: solution-transport-equation}
    Let $(\Omega, \mathcal F, \mathcal F_t, \P)$ be a filtered probability space, and the space-time noise $W(t,x)$ defined in this probability space has the form \eqref{def: noise}. We say that an $\mathcal F_t$-progressively measurable, $L^2(\T^d)$-valued process $u$, with paths in $C([0,\infty); L^2_w(\T^d))$ is a solution of the equation \eqref{STE-Ito}, if for every $\phi \in C^{\infty}(\T^d)$, $\P$-a.s. the following identity holds for all $t \geq 0$:
    \begin{equation}\label{def-solu-STE}
        \< u(t), \phi \> - \< u(0), \phi \> = \kappa \int_{0}^{t} \< u(s), \, \Delta \phi \> \, \d s - \sqrt{C_d \kappa} \sum_{l,i} \theta_l \int_{0}^{t} \<  u(s), \sigma_{-l,i} \cdot \nabla \phi \> \, \d W_{s}^{l,i},
    \end{equation}
    and $\| u(t)\|_{L^2}^2 \leq \| u(0)\|_{L^2}^2$ holds $\P$-a.s. for all $t \ge0$.
\end{definition}

As mentioned in Section \ref{subsec: intro-mixing}, we want to prove mixing properties of the solution $u$ by studying its energy spectrum. To this end, we will need the equations for $|\hat{u}_{k}(t)|^2$ and $\E |\hat{u}_{k}(t)|^2$ which have been derived in \cite[Lemma 2]{Gal20}. Equation \eqref{eq: Fourier ODE-1} will be used in the proof of mixing in the average sense, and equation \eqref{eq: Fourier-coefficient-norm} will be used in the proof of mixing in $\P$-a.s. sense. Given the significance of the above results and the completeness of this article, we include the complete proofs for these results.

\begin{proposition} \label{prop: Fourier coefficient}
    Let $\hat{u}_{k}=\<u,e_{k}\>,\, k\in \Z^2_0$ be the Fourier coefficients of the solution $u(t)$ to equation \eqref{STE-Ito}; then, $|\hat{u}_{k}|^2$ satisfies
    \begin{align}
        \d |\hat{u}_{k}|^2 &= - 8 \pi^2 \kappa |k|^2 |\hat{u}_{k}|^2 \, \d t + 8 \pi^2 C_d \kappa \sum_{l,i} \theta_{l}^2 (a_{l,i} \cdot k)^2 |\hat{u}_{k-l}|^2 \, \d t \nonumber  \\
        & \quad + 2 \pi {\rm i} \sqrt{C_d \kappa} \sum_{l,i} \theta_l (a_{l,i} \cdot k) ( \overline{\hat{u}_k} \hat{u}_{k-l}-\hat{u}_k \overline{\hat{u}_{k+l}} ) \, \d W_{t}^{l,i}. \label{eq: Fourier-coefficient-norm}
    \end{align}
    Consequently, $\E |\hat{u}_{k}|^2$ satisfies the infinite-dimensional ODE:
    \begin{equation} \label{eq: Fourier ODE-1}
    \frac{\d}{\d t} \E |\hat{u}_{k}|^2 = - 8 \pi^2 \kappa |k|^2 \E |\hat{u}_{k}|^2 + 8 \pi^2 C_d \kappa \sum_{l,i} \theta_{l}^2 (a_{l,i} \cdot k)^2 \E |\hat{u}_{k-l}|^2 .
    \end{equation}
\end{proposition}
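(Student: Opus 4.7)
The plan is a direct It\^o-level computation on the Fourier modes. First, I would specialize the weak formulation \eqref{def-solu-STE} to the test function $\phi=e_k$. Using $\<u,\Delta e_k\>=-4\pi^2|k|^2\hat u_k$ together with the identity $\sigma_{-l,i}\cdot\nabla e_k = 2\pi{\rm i}\,(a_{l,i}\cdot k)\, e_{k-l}$ (which exploits $a_{-l,i}=a_{l,i}$ and $e_{-l}e_k=e_{k-l}$) and the orthonormality relation $\<e_m,e_n\>=\delta_{mn}$ (which converts $\<u,e_{k-l}\>$ into $\hat u_{k-l}$), the weak form collapses into the scalar complex It\^o SDE
\begin{equation*}
\d\hat u_k = -4\pi^2\kappa |k|^2 \hat u_k\,\d t + 2\pi{\rm i}\sqrt{C_d\kappa}\sum_{l,i}\theta_l (a_{l,i}\cdot k)\,\hat u_{k-l}\,\d W^{l,i}_t.
\end{equation*}

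Next I would apply the complex It\^o product rule $\d|\hat u_k|^2 = \overline{\hat u_k}\,\d\hat u_k + \hat u_k\,\d\overline{\hat u_k} + \d[\hat u_k,\overline{\hat u_k}]_t$. To evaluate $\d\overline{\hat u_k}$ I conjugate the SDE above and use $\overline{W^{l,i}}=W^{-l,i}$, $\theta_{-l}=\theta_l$, $a_{-l,i}=a_{l,i}$; relabeling $l\mapsto -l$ in the resulting sum turns the conjugated martingale integrand from one indexed by $\hat u_{k-l}$ into one indexed by $\overline{\hat u_{k+l}}$. Adding the two drifts gives $-8\pi^2\kappa|k|^2|\hat u_k|^2$, while combining the two martingale integrands yields exactly the bracket $\overline{\hat u_k}\hat u_{k-l} - \hat u_k\overline{\hat u_{k+l}}$ displayed in \eqref{eq: Fourier-coefficient-norm}. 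For the covariation, the identity $[W^{l,i},W^{l',i'}]_t = 2t\,\delta_{l,-l'}\delta_{i,i'}$ forces $l'=-l$ in the cross product; the two $\pm 2\pi{\rm i}$ factors combine to $+4\pi^2$ and the factor $2$ from the bracket formula produces the constant $8\pi^2 C_d\kappa$, leaving $\theta_l^2(a_{l,i}\cdot k)^2|\hat u_{k-l}|^2$ as the summand.

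For the expectation ODE \eqref{eq: Fourier ODE-1}, I pass to the integrated form of \eqref{eq: Fourier-coefficient-norm} and take expectations. To ensure the martingale term drops out rigorously, I invoke the energy bound $\|u(t)\|_{L^2}\leq \|u_0\|_{L^2}$ from Definition \ref{def: solution-transport-equation}, which gives the uniform pointwise control $|\hat u_m(t)|\leq \|u_0\|_{L^2}$ for every $m$; together with $\|\theta\|_{\ell^2}=1$ and $|a_{l,i}\cdot k|\leq |k|$ this bounds the predictable quadratic variation of the stochastic integral by a constant depending only on $\kappa,d,k,\|u_0\|_{L^2}$, so the integral is a genuine square-integrable martingale of zero mean. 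Differentiating in $t$ yields \eqref{eq: Fourier ODE-1}. The only genuine bookkeeping hazard is in the conjugation step: one must carefully check that $\overline{\d W^{l,i}}=\d W^{-l,i}$ combined with the substitution $l\mapsto -l$ really converts a term involving $\hat u_{k-l}$ into one involving $\overline{\hat u_{k+l}}$ with the correct sign; once that is resolved, everything else is routine.
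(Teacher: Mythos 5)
Your proposal is correct and follows essentially the same route as the paper: derive the scalar SDE for $\hat u_k$ from the weak formulation with $\phi=e_k$, apply the complex It\^o product rule, compute the bracket via $[W^{l,i},W^{m,j}]_t=2t\,\delta_{l,-m}\delta_{i,j}$, and obtain the martingale integrand $\overline{\hat u_k}\hat u_{k-l}-\hat u_k\overline{\hat u_{k+l}}$ by conjugating and relabeling $l\mapsto -l$. Your explicit justification that the stochastic integral is a genuine zero-mean martingale (via $|\hat u_m|\le\|u_0\|_{L^2}$, $\|\theta\|_{\ell^2}=1$ and $|a_{l,i}\cdot k|\le|k|$) is a small but welcome addition the paper leaves implicit.
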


\begin{proof}
    By \eqref{def-solu-STE}, the Fourier coefficient $\hat{u}_k$ satisfies:
    $$ \d \hat{u}_{k} = \d \< u, \, e_k \> = - 4 \pi^2 \kappa |k|^2 \hat{u}_{k} \, \d t + \sqrt{C_d \kappa} \sum_{l,i} \theta_l \< \sigma_{l,i} \cdot \nabla u, \, e_k \> \,  \d W_{t}^{l,i} .$$
    Due to the vector fields $\{\sigma_{l,i} \}_{l,i}$ are divergence-free, we obtain
    \begin{align}
    \d \hat{u}_{k} = \d \< u, \, e_k \>  & = - 4 \pi^2 \kappa |k|^2 \hat{u}_{k} \, \d t - \sqrt{C_d \kappa} \sum_{l,i} \theta_l \< u, \, \sigma_{-l,i} \cdot \nabla e_k \> \, \d W_{t}^{l,i} \nonumber \\
    & = - 4 \pi^2 \kappa |k|^2 \hat{u}_{k} \, \d t + 2 \pi {\rm i} \sqrt{C_d \kappa} \sum_{l,i} \theta_l (a_{l,i} \cdot k) \hat{u}_{k-l} \, \d W_{t}^{l,i}. \label{eq: Fourier coefficient equation}
    \end{align}
    Using the identity $|\hat{u}_{k}|^{2}=\hat{u}_{k}\overline{\hat{u}_k}$ and It\^o formula, we have
    $$ \d |\hat{u}_{k}|^2 =\hat{u}_{k} \, \d \overline{\hat{u}_k} +\frac{1}{2} \, \d \big[ \hat{u}_{k} , \, \overline{\hat{u}_k} \,\big] + \overline{\hat{u}_k} \, \d \hat{u}_{k} + \frac{1}{2} \, \d \big[ \, \overline{\hat{u}_k}, \, \hat{u}_{k} \big] .$$
    By equation \eqref{eq: Fourier coefficient equation}, we get
    \begin{align*}
        \d \big[ \hat{u}_{k} , \,  \overline{\hat{u}_k} \, \big] & = 4 \pi^2 C_d \kappa \, \d \Bigl[\, \sum_{l,i} \theta_l (a_{l,i} \cdot k) \hat{u}_{k-l} W_{t}^{l,i}, \, \sum_{m,j} \overline{ \theta_m (a_{m,j} \cdot k) \hat{u}_{k-m} W_{t}^{m,j}}  \, \Bigr] \\
        & =4 \pi^2 C_d \kappa \sum_{l,i} \sum_{m,j} \theta_{l} \theta_{m} (a_{l,i} \cdot k) (a_{m,j} \cdot k) \hat{u}_{k-l}  \overline{\hat{u}_{k-m}} \, \d \bigl[ W^{l,i} , W^{-m,j} \bigr]_t \\
        & = 8 \pi^2 C_d \kappa \sum_{l,i} \theta_{l}^2 (a_{l,i} \cdot k)^2 |\hat{u}_{k-l}|^2 \, \d t.
    \end{align*}
    Therefore, $\d \big[ \hat{u}_{k} , \,  \overline{\hat{u}_k} \, \big]= \d \big[ \, \overline{\hat{u}_k}, \, \hat{u}_{k} \big] $ and it holds
    \begin{align*}
    \d |\hat{u}_{k}|^2 &= \hat{u}_{k} \, \d \overline{\hat{u}_k}+ \overline{\hat{u}_k} \, \d \hat{u}_{k} + 8 \pi^2 C_d \kappa \sum_{l,i} \theta_{l}^2 (a_{l,i} \cdot k)^2 |\hat{u}_{k-l}|^2 \, \d t  \\
    &= - 8 \pi^2 \kappa |k|^2 |\hat{u}_{k}|^2 \, \d t + 8 \pi^2 C_d \kappa \sum_{l,i} \theta_{l}^2 (a_{l,i} \cdot k)^2 |\hat{u}_{k-l}|^2 \, \d t \nonumber  \\
    & \quad + 2 \pi {\rm i} \sqrt{C_d \kappa} \sum_{l,i} \theta_l (a_{l,i} \cdot k) ( \overline{\hat{u}_k} \hat{u}_{k-l}-\hat{u}_k \overline{\hat{u}_{k+l}} ) \, \d W_{t}^{l,i}.
    \end{align*}
    So equation \eqref{eq: Fourier-coefficient-norm} holds, and taking expectation of both sides of \eqref{eq: Fourier-coefficient-norm} gives us \eqref{eq: Fourier ODE-1}.
\end{proof}

If the initial data $u_0$ is smooth and coefficients $\{\theta_k\}_k$ have compact support, then we are in the good case where the solution of equation \eqref{STE-Ito} is sufficiently regular. This regularity can be deduced from the nice properties of the stochastic flow corresponding to equation \eqref{STE-Ito}, see Lemma \ref{lem: flow} below. It will allow us to exchange the order of summation in the proofs of Lemmas \ref{lem: sum-Fourier-ODE} and \ref{lem: uniform-bounded}, Theorem \ref{thm: positive-sobolev} and so on.

\begin{lemma} \label{lem: flow}
    If $\supp \theta \subset \Z_0^d$ is compact and the initial data $u_0$ is smooth, then the solution to equation \eqref{STE-Ito} is $u(t,x)=u_0(\phi^{-1}_t(x))$, where $\{\phi_t\}_{t \geq 0}$ is the stochastic flow of $C^{\infty}$-diffeomorphisms generated by
    \begin{equation} \label{eq: flow}
       \d \phi_t(x) = -\sqrt{C_d \kappa} \sum_{k,i} \theta_k \sigma_{k,i}(\phi_t (x)) \circ \d W_{t}^{k,i}, \quad \phi_0(x)=x.
    \end{equation}
    Since $\{\sigma_{k,i}\}_{k,i}$ are divergence-free, $\{\phi_t\}_{t \geq 0}$ preserves the volume measure on $\T^d$. Moreover, for any $T>0$, $p \geq 2$ and non-negative multiple indices $\alpha=(\alpha_1,\alpha_2,\cdots, \alpha_d)\in \N^d$ with norm $|\alpha|=\sum_{i=1}^d \alpha_i$, we have
    \begin{equation*}
        \sup_{x \in \T^d} \E \sup_{t \in [0,T]} \bigg|\frac{\partial^{|\alpha|}}{\partial x_1^{\alpha_1} \, \partial x_2^{\alpha_2} \, \cdots  \partial x_d^{\alpha_d} } \phi_t^{-1}(x) \bigg|^p < + \infty.
    \end{equation*}
    Thus, for any $\beta>0$, the solution $u \in L^2(\Omega; L^{\infty}([0,T]; H^{\beta}(\T^d) )) \cap L^{\infty}(\Omega; L^{\infty}([0,T]; L^{2}(\T^d)) $.
\end{lemma}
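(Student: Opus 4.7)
The plan is to invoke Kunita's theory of stochastic flows of $C^\infty$-diffeomorphisms. Because $\supp\theta$ is finite, equation \eqref{eq: flow} involves only finitely many noise components, and the coefficients $\sigma_{k,i}(x)=a_{k,i}e_k(x)$ are smooth with uniformly bounded derivatives of every order on $\T^d$. The reality conditions $\theta_{-k}=\theta_k$, $a_{-k,i}=a_{k,i}$ and $\overline{W^{k,i}}=W^{-k,i}$, together with the symmetric pairing in \eqref{def: noise}, ensure that after splitting each complex $W^{k,i}$ into independent real Brownian motions the resulting Stratonovich SDE has smooth real vector-field coefficients on the compact manifold $\T^d$. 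Kunita's theorem then provides a global stochastic flow $\{\phi_t\}_{t\ge0}$ of $C^\infty$-diffeomorphisms, and the divergence-free property of each $\sigma_{k,i}$ combined with the Stratonovich chain rule applied to $\det(\nabla\phi_t)$ yields $\d\det(\nabla\phi_t)=\det(\nabla\phi_t)\sum_{k,i}\sqrt{C_d\kappa}\,\theta_k(\div\sigma_{k,i})(\phi_t)\circ\d W^{k,i}_t=0$, i.e.\ volume preservation.

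To identify the solution with $u_0\circ\phi_t^{-1}$, I would apply the generalized It\^o--Wentzell formula to $v(t,x):=u_0(\phi_t^{-1}(x))$, invoking the backward SDE satisfied by the inverse flow (a standard consequence of Kunita's theory). The Stratonovich chain rule shows that $v$ solves \eqref{STE} in the weak sense of Definition \ref{def: solution-transport-equation}; converting to It\^o form reproduces \eqref{def-solu-STE}, and volume preservation yields $\|v(t)\|_{L^2}=\|u_0\|_{L^2}$. The pathwise uniqueness established in \cite[Theorem 3]{Gal20} then forces $u=v$.

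For the derivative moment bounds, differentiating \eqref{eq: flow} with respect to the initial point shows that $\partial^\alpha\phi_t(x)$ satisfies a linear SDE whose coefficients are polynomials in lower-order derivatives of $\phi_t$. Burkholder--Davis--Gundy together with Gr\"onwall's inequality, applied inductively on $|\alpha|$, delivers $\sup_{x\in\T^d}\E\sup_{t\le T}|\partial^\alpha\phi_t(x)|^p<+\infty$; since $\phi_t^{-1}$ satisfies an SDE of the same structural type (with the time-reversed filtration), identical bounds hold for $\phi_t^{-1}$. The main obstacle here is the bookkeeping of the induction through the coupled linear system for $\{\partial^\alpha\phi_t\}$, but nothing goes beyond the classical Kunita $L^p$-theory in our compactly-supported setting.

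Finally, the regularity of $u$ follows at once: volume preservation gives $\|u(t,\cdot)\|_{L^2}=\|u_0\|_{L^2}$ $\P$-a.s., yielding the $L^\infty(\Omega;L^\infty([0,T];L^2))$ bound. For any integer $m\ge\lceil\beta\rceil$, Fa\`a di Bruno's formula expresses $\partial^\gamma[u_0\circ\phi_t^{-1}](x)$ with $|\gamma|\le m$ as a polynomial in the derivatives of $\phi_t^{-1}$ (of order at most $m$) multiplied by derivatives of $u_0$; combining the previous moment bounds with H\"older's inequality and the smoothness of $u_0$ gives $\E\sup_{t\in[0,T]}\|u(t)\|_{C^m}^2<+\infty$, which dominates $\E\sup_{t\in[0,T]}\|u(t)\|_{H^\beta}^2$ by the continuous embedding $C^m(\T^d)\hookrightarrow H^\beta(\T^d)$.
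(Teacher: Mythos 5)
Your proposal follows essentially the same route as the paper: the paper also delegates the existence, smoothness, volume preservation and inverse-flow moment bounds to Kunita's theory (citing \cite{Kun} and \cite[Lemma 3.9]{GessYar}) and only writes out the last step, namely the regularity of $u$. The extra detail you supply on volume preservation via the determinant SDE and on the identification $u=u_0\circ\phi_t^{-1}$ through the It\^o--Wentzell formula plus pathwise uniqueness is correct and fills in what the paper leaves to the references; the sign convention in \eqref{eq: flow} is consistent with your chain-rule computation.

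One step needs repair. In the final paragraph you pass from the estimate $\sup_{x\in\T^d}\E\sup_{t\le T}|\partial^\alpha\phi_t^{-1}(x)|^p<+\infty$ to $\E\sup_{t\in[0,T]}\|u(t)\|_{C^m}^2<+\infty$. The $C^m$ norm places $\sup_x$ \emph{inside} the expectation, and $\E\sup_x(\cdot)$ cannot be controlled by $\sup_x\E(\cdot)$; to run your argument literally you would need a uniform-in-$x$ moment bound (obtainable from Kunita's theory via a Kolmogorov--Sobolev argument, but stronger than what you quoted). The detour through $C^m$ is also unnecessary for the stated conclusion: since $H^\beta$ is an $L^2$-type norm in $x$, one can bound $\E\sup_t\|\partial^\gamma(u_0\circ\phi_t^{-1})\|_{L^2}^2$ by $\|u_0\|_{C^{|\gamma|}}^2$ times integrals over $x\in\T^d$ of $\E\sup_t$ of polynomial expressions in the $\partial^\alpha\phi_t^{-1}(x)$, and Fubini's theorem then reduces everything to exactly the $\sup_x\E\sup_t$ bounds you already have. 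This is precisely how the paper argues (it writes out the case $\beta=1$ and asserts the general case is similar). With that adjustment your proof is complete and matches the paper's.
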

\begin{proof}
    The regularity properties of the stochastic flow \eqref{eq: flow} have been proved in \cite{Kun} and \cite[Lemma 3.9]{GessYar}. We only prove the regularity of solution $u$ using these properties. The volume-preserving property of $\phi_t$ implies $\|u(t)\|_{L^2} = \|u_0\|_{L^2}$ $\P$-a.s., so $u \in L^{\infty}(\Omega; L^{\infty}([0,T]; L^{2}(\T^d))$. For $\beta=1$,
    \begin{align*}
        \E \sup_{t \in [0,T]} \|u(t)\|_{H^1}^2 &= \E \sup_{t \in [0,T]} \sum_{i=1}^d \Big\|\frac{\partial}{\partial x_i} u_0(\phi_t^{-1}(x)) \Big\|_{L^2}^2 \\
        & \leq \E \sup_{t \in [0,T]} \sum_{i=1}^d \big\| (\nabla u_0)\circ \phi_t^{-1} \big\|_{L^\infty}^2 \Big\| \frac{\partial}{\partial x_i} \phi_t^{-1} \Big\|_{L^2}^2 \\
        & \le \| \nabla u_0 \|_{L^\infty}^2 \sum_{i=1}^d \E \sup_{t \in [0,T]} \Big\| \frac{\partial}{\partial x_i} \phi_t^{-1} \Big\|_{L^2}^2.
    \end{align*}
    Using Fubini's theorem and the regularity estimate of stochastic flow \eqref{eq: flow}, we obtain
    \begin{align*}
        \E \sup_{t \in [0,T]} \|u(t)\|_{H^1}^2 & \leq \| \nabla u_0 \|_{L^\infty}^2 \sum_{i=1}^d  \sup_{x \in \T^d} \E \sup_{t \in [0,T]} \Big| \frac{\partial}{\partial x_i} \phi_t^{-1} (x) \Big|^2 < +\infty.
    \end{align*}
    For other $\beta>0$, similarly we can get the solution $u$ belongs to $ L^2(\Omega; L^{\infty}([0,T]; H^{\beta}(\T^d) ))$.
\end{proof}

In this paper, we often prove our results first in the case that the initial data $u_0$ is smooth and coefficients $\{\theta_k\}_k$ have compact support,
and then extend them to the general case by approximation and the following classical Lemma (see \cite[Theorem 3.7]{Bre}).

\begin{lemma} \label{lem: convegence}
    Let $E$ be a Banach space and $C$ be a convex subset of $E$. Then $C$ is closed in the weak topology of $E$ if and only if it is closed in the strong topology of $E$.
\end{lemma}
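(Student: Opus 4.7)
The plan is to establish the two implications separately. The forward direction, that weak closedness implies norm closedness, requires no convexity assumption and is immediate from the general topological fact that the weak topology on $E$ is coarser than the norm topology, so every weakly closed subset of $E$ is automatically norm closed.

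For the nontrivial converse, suppose $C$ is convex and norm closed, and (the empty case being trivial) that $C$ is nonempty. I would show that $E\setminus C$ is weakly open. Fix an arbitrary $x_0 \in E\setminus C$. The singleton $\{x_0\}$ is convex and compact, and $C$ is convex, nonempty and norm closed with $\{x_0\}\cap C=\emptyset$, so by the geometric (strict-separation) form of the Hahn--Banach theorem in the normed space $E$, there exist $f \in E^{*}$ and $\alpha \in \R$ such that
\[
  f(x_0) < \alpha \le f(x), \quad \forall\, x \in C.
\]
Consequently, the open half-space $U := \{y \in E : f(y) < \alpha\}$ is a weak neighborhood of $x_0$ that is disjoint from $C$. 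Since $x_0$ was arbitrary, this exhibits $E\setminus C$ as weakly open, so $C$ is weakly closed.

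The only nontrivial ingredient is the Hahn--Banach separation theorem in a normed space, which is the very tool used in the cited reference. I do not expect a genuine obstacle here: once strict separation is applied at each point outside $C$, the conclusion that $C$ is weakly closed is purely topological. One might alternatively argue via Mazur's theorem that any weak limit of a net (not merely a sequence, since the weak topology need not be metrizable on all of $E$) in $C$ is a norm limit of convex combinations from $C$ and hence lies in $C$; but the separation approach above is cleaner and avoids invoking nets.
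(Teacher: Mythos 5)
Your proof is correct and is precisely the standard argument behind the result the paper cites (Brezis, Theorem 3.7): the forward direction follows from the weak topology being coarser, and the converse is the Hahn--Banach strict separation of a point outside the norm-closed convex set, which exhibits the complement as weakly open. The paper itself offers no proof beyond this citation, so there is nothing further to compare.
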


The following Poincar\'e type inequality in $\N$ is necessary to prove Theorem \ref{thm: mixing-exp}.

\begin{lemma} \label{lem: sequence sum}
Let $p > 1$ and $\{a_{n}\}_{n \in \N}$ be a non-negative sequence in $\ell^{p}(\N)$, then
$$\sum_{n \in \N} a_{n}^{p} \leq \frac{2p^2}{p-1} \sum_{n \in \N} (n+1)^2 (a_{n+1}^{p-1}-a_{n}^{p-1}) (a_{n+1}-a_{n}).$$
In particular, when $p=2$, the sequence $\{a_{n}\}_{n \in \N}$ need not be non-negative.
\end{lemma}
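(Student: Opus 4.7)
The plan is to reduce to a discrete Hardy-type inequality at exponent $2$ via an elementary calculus lemma, and then prove that Hardy inequality by summation by parts plus Cauchy--Schwarz.

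First, for $x \ge y \ge 0$ and $p > 1$, I would write $x^{p/2} - y^{p/2} = \frac{p}{2}\int_y^x t^{p/2-1}\,dt$ and apply Cauchy--Schwarz to obtain
\begin{equation*}
(x^{p/2}-y^{p/2})^2 = \frac{p^2}{4}\Bigl(\int_y^x 1\cdot t^{p/2-1}\,dt\Bigr)^2 \le \frac{p^2}{4}(x-y)\int_y^x t^{p-2}\,dt = \frac{p^2}{4(p-1)}(x-y)(x^{p-1}-y^{p-1}).
\end{equation*}
The resulting pointwise inequality is symmetric in $x,y$, so it holds for all $x,y\ge 0$. Setting $c_n := a_n^{p/2}$, this reduces the theorem to the Hardy-type bound $\sum_n c_n^2 \le C \sum_n (n+1)^2(c_{n+1}-c_n)^2$ on $\ell^2(\N)$, where any constant $C\le 8$ suffices (in fact $C=4$ will give the sharper leading constant $\frac{p^2}{p-1}$).

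Next, I would prove the Hardy-type inequality with $C=4$ as follows. Assuming both sides finite (otherwise the conclusion is vacuous), we have $c_n\to 0$, so telescoping and factoring give
\begin{equation*}
c_n^2 = \sum_{k\ge n}(c_k^2 - c_{k+1}^2) = \sum_{k\ge n}(c_k - c_{k+1})(c_k + c_{k+1}).
\end{equation*}
Exchanging the order of summation (justified by absolute convergence via Cauchy--Schwarz on the two target sums) gives $\sum_n c_n^2 = \sum_k (k+1)(c_k - c_{k+1})(c_k + c_{k+1})$. A single application of Cauchy--Schwarz together with $(c_k+c_{k+1})^2\le 2(c_k^2+c_{k+1}^2)$ then yields
\begin{equation*}
\sum_n c_n^2 \le 2\Bigl(\sum_k(k+1)^2(c_{k+1}-c_k)^2\Bigr)^{1/2}\Bigl(\sum_n c_n^2\Bigr)^{1/2},
\end{equation*}
and rearranging gives $\sum_n c_n^2 \le 4\sum_n (n+1)^2(c_{n+1}-c_n)^2$. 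Crucially, this argument never uses $c_n\ge 0$, so in the special case $p=2$ it applies directly to signed sequences, accounting for the ``in particular'' clause of the statement.

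Chaining the two steps yields the theorem, in fact with the stronger constant $\frac{p^2}{p-1}$ (the stated $\frac{2p^2}{p-1}$ leaves a factor of $2$ of slack, which is convenient if one prefers to avoid the sharp Hardy constant). The main technical subtlety is the exchange of summation order in the Hardy step; this is the only point that demands care. It can be handled either by the absolute-convergence argument sketched above, or by computing $\sum_{n=0}^N c_n^2 = \sum_{k=0}^N (k+1)(c_k^2-c_{k+1}^2) + (N+1)c_{N+1}^2$ directly and passing $N\to\infty$ along a subsequence on which $(N+1)c_{N+1}^2\to 0$ (such a subsequence exists whenever $\sum c_n^2<\infty$).
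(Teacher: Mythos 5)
Your proposal is correct, and it takes a genuinely different route from the paper. The paper works directly at exponent $p$: it performs summation by parts against the truncated sequence $c_n^N=a_n^{p-1}$ to get $\sum_{n=0}^N a_n^p=-\sum_{n=0}^{N-1}(n+1)(a_{n+1}^p-a_n^p)+(N+1)a_N^p$, controls $|a^p-b^p|$ by $\frac{p}{\sqrt{p-1}}\sqrt{a^p+b^p}\,\sqrt{(a^{p-1}-b^{p-1})(a-b)}$ using convexity of $x^p$ and $x^{p/(p-1)}$, applies Cauchy--Schwarz, and kills the boundary term $(N+1)a_N^p$ along a subsequence. You instead factor the problem into two independent pieces: the pointwise bound $(x^{p/2}-y^{p/2})^2\le \frac{p^2}{4(p-1)}(x-y)(x^{p-1}-y^{p-1})$ obtained by Cauchy--Schwarz on $\int_y^x t^{p/2-1}\,\d t$, which reduces everything to the quadratic case via $c_n=a_n^{p/2}$, and then a clean $\ell^2$ Hardy-type inequality $\sum c_n^2\le 4\sum(n+1)^2(c_{n+1}-c_n)^2$ proved by telescoping from infinity (legitimate since $c\in\ell^2$ forces $c_n\to 0$, and the Fubini exchange is justified by the absolute-convergence estimate you give). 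Your factorization buys three things: a constant $\frac{p^2}{p-1}$ that improves the paper's $\frac{2p^2}{p-1}$ by a factor of $2$; a transparent explanation of the $p=2$ exception, since non-negativity enters only through the reduction step and the Hardy inequality itself is sign-free; and a reusable quadratic lemma. The paper's one-pass argument is slightly more self-contained but structurally both proofs end with the same ``Cauchy--Schwarz and divide by $(\sum a_n^p)^{1/2}$'' move, and your handling of the convergence issue (either telescoping or the same subsequence trick the paper uses) is sound.
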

\begin{proof}
For any sequence $\{c_{n}\}_{n \in \N}$ with only finite nonzero terms, we have
    \begin{align*}
         \sum_{n \in \N} a_{n} c_{n} & = \sum_{n \in \N}  (n+1) a_{n} c_{n} - \sum_{n \in \N} n  a_n c_n = \sum_{n \in \N}  (n+1) a_{n} c_{n} - \sum_{n \in \N} (n+1)  a_{n+1} c_{n+1}  \\
         & = - \sum_{n \in \N} (n+1)\big( a_{n+1}c_{n+1}-a_{n}c_{n} \big) .
    \end{align*}
    We set $c_{n}^N = a_{n}^{p-1}$ if $0 \leq n \leq N$ and $c_{n}^N = 0$ if $n \geq N+1$. Substituting $\{c_n^N\}_{n \in \N}$ into the above equation, we obtain
    \begin{equation} \label{eq: lem-sequence-sum}
        \sum_{n=0}^{N} a_{n}^p = - \sum_{n=0}^{N-1} (n+1) (a_{n+1}^{p}-a_{n}^{p}) + (N+1)a_{N}^{p}.
    \end{equation}

When $p>1$, the functions $x^p$ and $x^{\frac{p}{p-1}}$ are convex, then for any $a,b \geq 0$, it holds
    \begin{align*}
        p\, b^{p-1} \, (a-b) & \leq  a^{p}-b^p \leq p \, a^{p-1} \, (a-b) ;\\
        \frac{p}{p-1} \, b \, (a^{p-1}-b^{p-1}) & \leq a^{p}-b^p \leq \frac{p}{p-1} \, a \, (a^{p-1}-b^{p-1}).
    \end{align*}
Combining the above two inequalities, we have
    $$|a^{p}-b^{p}| \leq \frac{p}{\sqrt{p-1}} \sqrt{a^p+b^p} \, \sqrt{(a^{p-1}-b^{p-1})(a-b)}.$$
When $p=2$, the formula also holds without requiring $a,b \geq 0$. Using triangle inequality and H\"older's inequality, we obtain
    \begin{align*}
        \left| \sum_{n=0}^{N-1} (n+1) (a_{n+1}^p-a_{n}^p) \right| & \leq \frac{p}{\sqrt{p-1}} \Big( \sum_{n=0}^{N-1} (a_{n}^{p}+a_{n+1}^{p}) \Big)^{1/2} \\
        & \quad \times \Big( \sum_{n=0}^{N-1} (n+1)^2 (a_{n+1}^{p-1}-a_{n}^{p-1}) (a_{n+1}-a_{n}) \Big)^{1/2}.
    \end{align*}
From equality \eqref{eq: lem-sequence-sum} and the above estimate, we have
    \begin{equation*}
        \sum_{n=0}^{N} a_{n}^p \leq \sqrt{\frac{2 p^2}{p-1}} \Big( \sum_{n=0}^{N} a_{n}^p\Big)^{1/2} \Big( \sum_{n=0}^{N-1} (n+1)^2 (a_{n+1}^{p-1}-a_{n}^{p-1}) (a_{n+1}-a_{n}) \Big)^{1/2}+(N+1)a_{N}^{p}.
    \end{equation*}

We claim that for any $\epsilon>0$, there exists a subsequence $\{N_j\}_{j \in \mathbb{N}}$ such that $(N_j+1) a_{N_j}^p \leq \epsilon$. If this claim does not hold, then there exists $L>0$, such that for all $n>L$, we have $(n+1) a_n^p > \epsilon$ for all $n>L$. This results in $a_n^p > \frac{\epsilon}{n+1}$ for all $n>L$, which contradicts with the fact that $\{a_n\} \in \ell^p(\N)$. Thus, the claim is true, and we can conclude with
    \begin{equation*}
        \sum_{n=0}^{N_j} a_{n}^p \leq \sqrt{\frac{2 p^2}{p-1}} \Big( \sum_{n=0}^{N_j} a_{n}^p\Big)^{1/2} \Big( \sum_{n=0}^{N_j-1} (n+1)^2 (a_{n+1}^{p-1}-a_{n}^{p-1}) (a_{n+1}-a_{n}) \Big)^{1/2}+\epsilon.
    \end{equation*}
Taking $j\rightarrow +\infty$ and $\epsilon\rightarrow 0$, we get
    $$\sum_{n \in \N} a_{n}^p \leq \sqrt{\frac{2 p^2}{p-1}} \Big( \sum_{n \in \N} a_{n}^p\Big)^{1/2} \Big( \sum_{n\in N} (n+1)^2 (a_{n+1}^{p-1}-a_{n}^{p-1}) (a_{n+1}-a_{n}) \Big)^{1/2}.$$
Upon simplification, we obtain the desired result.
\end{proof}

\section{Mixing for stochastic transport equation} \label{sec: transport equation}

The purpose of this section is to prove Theorems \ref{thm: mixing-exp} and \ref{thm: mixing-a.s.}. We also establish estimates of positive Sobolev norms for the solution $u$ to stochastic transport equation \eqref{STE-Ito} in Section \ref{subsec: positive-norm}, which provides us with lower bounds of the mixing rate.

\subsection{Mixing in the average sense} \label{subsec: mixing-average}

In this subsection, we will deal with the mixing of the solution $u$ to equation \eqref{STE-Ito} in the average sense. Let $Y_k(t)=\E|\hat{u}_k(t)|^2$ be the second moment of the Fourier coefficient $\hat{u}_k(t)$, $k\in \Z^d_0$. Firstly, we prove Lemma \ref{lem: sum-Fourier-ODE}, which implies that the $\ell^{p}$ norm of sequence $\{ Y_k \}_{k}$ is decreasing for any $p>1$. Next, applying Lemma \ref{lem: sequence sum}, we show in Theorem \ref{thm: Fourier-coefficient-exp-decay} that the $\ell^{p}$ norm of $\{Y_k\}_k$ decays exponentially fast. Finally, we use Theorem \ref{thm: Fourier-coefficient-exp-decay} and H\"older inequality to get the mixing properties of solutions to \eqref{STE-Ito} in the average sense (see Theorem \ref{thm: mixing-exp}).

\begin{lemma} \label{lem: sum-Fourier-ODE}
    If $\supp \theta \subset \Z_0^d$ is compact and $u_0$ is smooth, then for any $p\geq 1$ we have
    \begin{equation*}
        \frac{\d}{\d t} \sum_{k}  Y_{k}^p = -4 \pi^2 C_{d} \kappa \, p  \sum_{l} \sum_{k} \theta_{l}^2 |\Pi_l^\perp k|^2 \big(Y_{k+l}-Y_{k} \big) \big(Y_{k+l}^{p-1}-Y_{k}^{p-1} \big).
    \end{equation*}
\end{lemma}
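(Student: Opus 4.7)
The plan is to start from the ODE for $Y_{k}=\E|\hat u_{k}|^{2}$ given in Proposition \ref{prop: Fourier coefficient}, rewrite it in a manifestly symmetric ``discrete divergence'' form, and then perform a routine symmetrization after differentiating $\sum_{k} Y_{k}^{p}$. First I would observe that for each fixed $l\in\Z^{d}_{0}$, since $\{a_{l,i}\}_{i=1}^{d-1}$ is an orthonormal basis of $l^{\perp}$,
$$\sum_{i=1}^{d-1}(a_{l,i}\cdot k)^{2}=\Big|\Big(\Id-\tfrac{l}{|l|}\otimes\tfrac{l}{|l|}\Big)k\Big|^{2}=|\Pi_{l}^{\perp}k|^{2}.$$
Moreover, by Lemma \ref{lem: matrix-property} and $\|\theta\|_{\ell^{2}}=1$, one has $C_{d}\sum_{l}\theta_{l}^{2}|\Pi_{l}^{\perp}k|^{2}=|k|^{2}$. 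Substituting both identities into \eqref{eq: Fourier ODE-1} turns it into the symmetric form
$$\frac{\d}{\d t}Y_{k}=8\pi^{2}C_{d}\kappa\sum_{l}\theta_{l}^{2}|\Pi_{l}^{\perp}k|^{2}\bigl(Y_{k-l}-Y_{k}\bigr).$$

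Next I would multiply by $pY_{k}^{p-1}$ and sum in $k$. The exchange of $\d/\d t$ with $\sum_{k}$ and the absolute convergence of the resulting double series are both justified by the regularity provided by Lemma \ref{lem: flow} (which gives $u\in L^{2}(\Omega;L^{\infty}([0,T];H^{\beta}))$ for every $\beta>0$ under the present smoothness assumptions on $u_{0}$ and the compact support of $\theta$); this yields
$$\frac{\d}{\d t}\sum_{k}Y_{k}^{p}=8\pi^{2}C_{d}\kappa\, p\sum_{k,l}\theta_{l}^{2}|\Pi_{l}^{\perp}k|^{2}Y_{k}^{p-1}\bigl(Y_{k-l}-Y_{k}\bigr).$$

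Now comes the only nontrivial algebraic manoeuvre, the symmetrization. Since $\theta$ is radially symmetric and $|\Pi_{l}^{\perp}k|=|\Pi_{-l}^{\perp}k|$, sending $l\mapsto -l$ converts $Y_{k-l}-Y_{k}$ into $Y_{k+l}-Y_{k}$ without changing the rest of the summand. Then the change of variables $k\mapsto k+l$ (valid by absolute convergence and the fact that $|\Pi_{l}^{\perp}(k+l)|=|\Pi_{l}^{\perp}k|$) gives a second representation of the same double sum in which $Y_{k}^{p-1}(Y_{k+l}-Y_{k})$ is replaced by $Y_{k+l}^{p-1}(Y_{k}-Y_{k+l})$. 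Averaging the two representations collapses the expression into
$$-\frac{1}{2}\sum_{k,l}\theta_{l}^{2}|\Pi_{l}^{\perp}k|^{2}\bigl(Y_{k+l}^{p-1}-Y_{k}^{p-1}\bigr)\bigl(Y_{k+l}-Y_{k}\bigr),$$
which, multiplied by $8\pi^{2}C_{d}\kappa\,p$, is precisely the stated right-hand side.

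The main obstacle is essentially bookkeeping: ensuring that every interchange of $\d/\d t$, $\sum_{k}$ and $\sum_{l}$ is legitimate, and that the two changes of summation variables are performed on absolutely convergent series. This is where the regularity supplied by Lemma \ref{lem: flow} together with the compactness of $\supp\theta$ is essential, because only finitely many $l$ contribute and the polynomial weights $|\Pi_{l}^{\perp}k|^{2}$ are dominated by $Y_{k}\in h^{\beta}$ for arbitrarily large $\beta$. Once these exchanges are justified, the identity follows by the elementary symmetrization above, and the general case will later be obtained by approximation using Lemma \ref{lem: convegence}.
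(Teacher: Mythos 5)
Your proposal is correct and follows essentially the same route as the paper: derive the evolution identity for $\sum_k Y_k^p$ from Proposition \ref{prop: Fourier coefficient}, use Lemma \ref{lem: matrix-property} to absorb the $-|k|^2 Y_k$ term, justify the interchanges of summation via Lemma \ref{lem: flow}, and symmetrize by the two substitutions $l\mapsto -l$ and $k\mapsto k+l$. The only nit is that the shift $k\mapsto k+l$ should be applied to the original sum containing $Y_k^{p-1}(Y_{k-l}-Y_k)$ rather than after the sign flip in $l$ (as written literally it would produce $Y_{k+l}^{p-1}(Y_{k+2l}-Y_{k+l})$), but the second representation you actually state is the correct one and the averaging goes through verbatim.
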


\begin{proof}
    For any $p \geq 1$, equation \eqref{eq: Fourier ODE-1} yields the following integral equation:
    \begin{align}
    Y_{k}^p(t)-Y_{k}^p(0) & = 8 \pi^2 \kappa \, p \int_{0}^{t} \Big(-  |k|^2 Y_{k}^p (s) + C_d \sum_{l,i} \theta_{l}^2 (a_{l,i} \cdot k)^2 Y_{k-l}(s) Y_{k}^{p-1}(s) \Big) \, \d s \nonumber \\
    & =  8 \pi^2 C_d \kappa \, p \sum_{l,i} \theta_{l}^2 (a_{l,i} \cdot k)^2 \int_{0}^{t} \big(Y_{k-l}(s)-Y_{k}(s)  \big)  Y_{k}^{p-1}(s) \, \d s, \label{eq: Fourier-coefficient-norm-2}
    \end{align}
    where the last step is due to Lemma \ref{lem: matrix-property}. By the fact $\sum_{k} Y_{k}(t) = \E \|u(t)\|_{L^2}^2 \leq \|u(0)\|_{L^2}^2 < +\infty$, we obtain $\sup\limits_{k} Y_{k}(t) \leq \|u(0)\|_{L^2}^2 < + \infty$ and
    $$\sum_{k} Y_{k}^{p}(t) \leq \sup\limits_{k} Y_{k}^{p-1}(t) \sum_{k} Y_{k}(t) \leq \|u(0)\|_{L^2}^{2p}  < +\infty, \quad \forall \, t \geq 0.$$
    Since $\sum_{k} Y_{k}^p (t)<+\infty$, summing equation \eqref{eq: Fourier-coefficient-norm-2} over $k \in \Z_0^d$ yields
    \begin{equation} \label{eq: Fourier sum}
        \sum_{k} Y_{k}^p(t) - \sum_{k} Y_{k}^p (0)= 8 \pi^2 C_d \kappa \, p \sum_{k} \sum_{l,i} \theta_{l}^2 (a_{l,i} \cdot k)^2 \int_{0}^{t} \big(Y_{k-l}(s)-Y_{k}(s) \big) Y_{k}^{p-1}(s) \, \d s.
    \end{equation}

According to Lemma \ref{lem: flow}, $u$ belongs to $ L^2(\Omega; L^{\infty}([0,T]; H^1(\T^d) ))$. Therefore, the series on the right-hand side of equation \eqref{eq: Fourier sum} converges absolutely. By Fubini's theorem, we have
    \begin{equation} \label{eq: Fourier sum-2}
        \sum_{k} Y_{k}^p(t) - \sum_{k} Y_{k}^p (0)= 8 \pi^2 C_d \kappa \, p \sum_{l,i} \sum_{k} \theta_{l}^2 (a_{l,i} \cdot k)^2 \int_{0}^{t} \big(Y_{k-l}(s)-Y_{k}(s) \big) Y_{k}^{p-1}(s) \, \d s.
    \end{equation}
    For any fixed $l \in \Z_0^d$, since $a_{l,i} \cdot l=0$, substituting $k$ for $\Tilde{k}+l$ on the right-hand side of equation \eqref{eq: Fourier sum-2} yields
    \begin{equation*}
        \sum_{k} Y_{k}^p(t) - \sum_{k} Y_{k}^p (0)= 8 \pi^2 C_d \kappa \, p \sum_{l,i} \sum_{\Tilde{k}} \theta_{l}^2 (a_{l,i} \cdot \Tilde{k})^2 \int_{0}^{t} \big(Y_{\Tilde{k}}(s)-Y_{\Tilde{k}+l}(s) \big) Y_{\Tilde{k}+l}^{p-1}(s) \, \d s.
    \end{equation*}
    Substituting $l$ for $-\Tilde{l}$ in the equation \eqref{eq: Fourier sum-2} and using the symmetry \eqref{def: theta_sym} of $\theta$ yield
    \begin{equation*}
        \sum_{k} Y_{k}^p(t) - \sum_{k} Y_{k}^p (0)= 8 \pi^2 C_d \kappa \, p \sum_{\Tilde{l},i} \sum_{k} \theta_{\Tilde{l}}^2 (a_{\Tilde{l},i} \cdot k)^2 \int_{0}^{t} \big(Y_{k+\Tilde{l}}(s)-Y_{k}(s) \big) Y_{k}^{p-1}(s) \, \d s.
    \end{equation*}
    Adding the two equations above, we deduce that $\sum_{k} Y_{k}^p(t)$ satisfies
    $$ \sum_{k} Y_{k}^p(t) - \sum_{k} Y_{k}^p (0)= -4 \pi^2 C_{d} \kappa \, p  \sum_{l,i} \sum_{k} \theta_{l}^2 (a_{l,i} \cdot k)^2\!\! \int_0^t\!\! \big(Y_{k+l}(s)-Y_{k}(s) \big) \big(Y_{k+l}^{p-1}(s)-Y_{k}^{p-1}(s) \big) \,\d s.$$
Finally, noting that
  $$\sum_{i=1}^{d-1}(a_{l,i} \cdot k)^2 = |k|^2 - \frac{(k\cdot l)^2}{|k|^2} = |\Pi_l^\perp k|^2 ,$$
we obtain the desired result.
\end{proof}

By Lemma \ref{lem: sum-Fourier-ODE} and Gr\"onwall's inequality, in order to get the exponential decay of $\sum_{k} Y_{k}^p(t)$, we need
\begin{equation} \label{eq: estimate-for-gronwall}
    \sum_{k} Y_{k}^p \leq c \sum_{l} \sum_{k}\theta_{l}^2 \, |\Pi_l^\perp k|^2 (Y_{k+l}^{p-1}-Y_{k}^{p-1}) (Y_{k+l}-Y_{k})
\end{equation}
holds for some $c>0$. When $p=2$, the right-hand side of the above inequality can be seen as a Dirichlet form in $h^{1}(\Z_0^d)$. Unfortunately, we are not aware of the existence of such an inequality; nonetheless, Lemma \ref{lem: sequence sum} provides us with a similar estimate in one dimension. Thus, a natural approach to establish the Poincar\'e type inequality \eqref{eq: estimate-for-gronwall} is to decompose $\Z_0^d$ into countable orbits and apply Lemma \ref{lem: sequence sum} to each one of them. As the decomposition is quite complicated for $d \geq 3$, in this subsection we only provide the proof for $d=2$; cf. Appendix \ref{appendix: proof} for the higher dimensional case.

\begin{theorem} \label{thm: Fourier-coefficient-exp-decay}
    Let $Y(t):=\{Y_k(t)\}_k $. If $\supp \theta \subset \Z_0^d$ is compact and $u_0$ is smooth, then
    \begin{equation} \label{eq: Fourier-coefficient-exp-decay}
       \|Y(t)\|_{\ell^p} \leq e^{- \kappa C(\theta,d) \, \frac{1}{p} (1-\frac{1}{p})  \, t} \|Y(0)\|_{\ell^p} , \quad \forall \, t>0,
    \end{equation}
    where $C(\theta,d)$ is defined in \eqref{def: C-theta-d}.
\end{theorem}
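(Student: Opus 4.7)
The plan is to combine the energy balance of Lemma \ref{lem: sum-Fourier-ODE} with a discrete Poincar\'e-type inequality on the Fourier lattice, and then conclude by Gr\"onwall's lemma. Applied to our setting, Lemma \ref{lem: sum-Fourier-ODE} yields
\begin{equation*}
\frac{\d}{\d t}\sum_k Y_k^p=-4\pi^2 C_d\kappa p\sum_{l,k}\theta_l^2|\Pi_l^\perp k|^2(Y_{k+l}-Y_k)(Y_{k+l}^{p-1}-Y_k^{p-1}),
\end{equation*}
so \eqref{eq: Fourier-coefficient-exp-decay} will follow once we establish the discrete Poincar\'e-type inequality
\begin{equation*}
C(\theta,2)\sum_k Y_k^p\le \frac{4\pi^2 C_d\,p^2}{p-1}\sum_{l,k}\theta_l^2|\Pi_l^\perp k|^2(Y_{k+l}-Y_k)(Y_{k+l}^{p-1}-Y_k^{p-1});
\end{equation*}
inserting this into the energy balance produces $\frac{\d}{\d t}\sum_k Y_k^p\le-\kappa C(\theta,2)\tfrac{p-1}{p}\sum_k Y_k^p$, and Gr\"onwall together with the identification $\|Y\|_{\ell^p}^p=\sum_k Y_k^p$ yields \eqref{eq: Fourier-coefficient-exp-decay}.

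Before attacking the Poincar\'e-type inequality I would first localise to the favourable case where $\theta$ has compact support in $\Z_0^2$ and $u_0$ is smooth, in which Lemma \ref{lem: flow} guarantees the regularity required to apply Lemma \ref{lem: sum-Fourier-ODE} termwise; the general case then follows by approximation together with Lemma \ref{lem: convegence} and the convexity and lower semicontinuity of $Y\mapsto\|Y\|_{\ell^p}^p$ on $\ell^1(\Z_0^2)$. To prove the Poincar\'e-type inequality itself, I would decompose $\Z_0^2$ into countably many one-dimensional orbits and apply the elementary Lemma \ref{lem: sequence sum} along each. Concretely, for each primitive $l\in\supp\theta$ one can pick $m=m(l)\in\Z_0^2$ with $\det(l,m)=\pm 1$, so that $\{l,m\}$ is a $\Z$-basis of $\Z^2$, every $k\in\Z^2$ has unique decomposition $k=nl+rm$, and an elementary calculation yields $|\Pi_l^\perp k|^2=r^2/|l|^2$. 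Applying Lemma \ref{lem: sequence sum} to $\{Y_{nl+rm}\}_r$ along the appropriate direction (splitting at $r=0$ to handle positive and negative indices), summing over $n$ and then over primitive $l$ weighted by $\theta_l^2$, the radial symmetry \eqref{def: theta_sym} of $\theta$ allows one to collect the prefactor $\sum_l\theta_l^2/|l|^2=\|\theta\|_{h^{-1}}^2$, which, together with the factor $\pi^2$ absorbed from the normalisation in $C(\theta,2)$, reproduces the target constant $C(\theta,2)=\pi^2\|\theta\|_{h^{-1}}^2$.

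The main obstacle is the matching between the weight $(n+1)^2$ provided by Lemma \ref{lem: sequence sum} and the weight $|\Pi_l^\perp k|^2$ appearing in the PDE Dirichlet form: since $|\Pi_l^\perp k|^2$ is \emph{constant} along $l$-orbits but varies \emph{quadratically} along orbits transversal to $l$, Lemma \ref{lem: sequence sum} must be applied along orbits in the complementary direction and the resulting one-dimensional Dirichlet bound must then be paired with the PDE contribution from the complementary direction via the radial symmetry of $\theta$. For $d\ge 3$ the orthogonal complement of $l$ is itself multi-dimensional, so a single complementary direction no longer suffices; one instead decomposes $\Z_0^d$ using the two-dimensional affine planes $P(l_1,l_2)$ introduced in Section \ref{subsec: setting}, which is why the $d\ge 3$ case is deferred to Appendix \ref{appendix: proof}.
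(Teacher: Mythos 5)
Your overall architecture matches the paper's: the energy identity of Lemma \ref{lem: sum-Fourier-ODE}, a discrete Poincar\'e inequality with constant $\frac{p-1}{8p^2}\|\theta\|_{h^{-1}}^2$, and Gr\"onwall; the target constant and the reduction are correct, as is the formula $|\Pi_l^\perp(nl+rm)|^2=r^2/|l|^2$. The gap is in the mechanism you propose for proving the Poincar\'e inequality. You decompose $\Z_0^2$ into straight lines $r\mapsto nl+rm$ and apply Lemma \ref{lem: sequence sum} along the direction $m$ complementary to $l$. But Lemma \ref{lem: sequence sum} then produces differences $Y_{k+m}-Y_k$ in the direction $m$ weighted by $(r+1)^2$, and the only terms of the Dirichlet form containing differences in the direction $m$ are those with outer index $\pm m$, whose weight $|\Pi_m^\perp(nl+rm)|^2=n^2\,|\Pi_m^\perp l|^2$ is \emph{constant} in $r$ along that line. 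The term with outer index $l$ does carry the weight $r^2/|l|^2$ growing quadratically in $r$, but its differences are in the direction $l$, not $m$. So no straight-line orbit ever sees a weight growing quadratically in the index along which its own differences are taken --- which is exactly what Lemma \ref{lem: sequence sum} requires (with a weight that is constant along the orbit the inequality of that lemma is simply false, e.g.\ for a long plateau followed by decay). ``Pairing with the PDE contribution from the complementary direction via radial symmetry'' does not repair this, because that contribution has the same defect with the roles of $n$ and $r$ exchanged.

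The paper's resolution, which is the one genuinely new idea of the proof and is missing from your plan, is to use \emph{staircase} orbits that alternate single steps in the two orthogonal directions $l$ and $l^{\perp}$ (with $|l^{\perp}|=|l|$, so $\theta_{l^{\perp}}=\theta_l$ by \eqref{def: theta_sym} and the contributions of the two directions combine under one coefficient). Along such an orbit $O(\cdot)$ started appropriately in a quadrant, both coordinates grow like $n/2$, so at every step the transversal component with respect to the \emph{current} step direction satisfies $|\Pi^{\perp}_{O(n+1)-O(n)}O(n)|^2\ge (n+1)^2/(4|l|^2)$; this is what matches the $(n+1)^2$ weight of Lemma \ref{lem: sequence sum}. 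One then verifies that the maximal staircases cover each quadrant exactly twice and sums over the four quadrants and over $|l|$-spheres. Your straight-line decomposition cannot be made to close and would need to be replaced by this (or an equivalent) zigzag construction; the same issue, compounded, is what forces the planar decomposition in the appendix for $d\ge 3$.
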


\begin{proof}
     We will first provide a detailed proof for $d=2$ and then briefly explain the difference between $d=2$ and $d \geq 3$. The complete proof for higher dimensions can be found in Appendix \ref{appendix: proof}. The proof for $d=2$ is divided into the following several steps.

     \textbf{Step 1: constructing orbits.} Fixing a vector $l=(a,b) \in \Z_0^2$ and its orthogonal vector $l^{\perp}=(-b,a)$, we will use $l$ and $l^{\perp}$ to construct orbits such that they cover $\Z_0^2$ and have the uniform lower bound estimate \eqref{eq: estimate-orbit-0}. Firstly, we define the first quadrant $Q_1$, which consists of its interior part $\mathring{Q}_{1}$ and boundary sets $\partial Q_{1,1}$, $\partial Q_{1,2}$, defined as follows:
     \begin{align*}
        \mathring{Q}_{1} & =\{z=a(z) \, l + b(z) \, l^{\perp} \in \Z_{0}^2 ; \; a(z) , \, b(z) > 0\}, \\
        \partial Q_{1,1} & = \{z=b(z) \, l^{\perp} \in \Z_{0}^2 ; \; b(z) > 0\}, \\
        \partial Q_{1,2} & = \{z=a(z) \, l \in \Z_{0}^2 ; \;  a(z) > 0\},
    \end{align*}
    where $a(z) = \frac{z \cdot l}{|l|^2}$ and $b(z) = \frac{z \cdot l^{\perp}}{|l^{\perp}|^2}= \frac{z \cdot l^{\perp}}{|l|^2}$. For $i=2,3,4$, we can define the quadrants $Q_i$ by selecting two different base vectors from $\{ l,l^{\perp}, -l,-l^{\perp} \}$ and replacing $l$ and $l^{\perp}$ in the definition of $Q_1$. The corresponding interior set $\mathring{Q}_{i}$ and boundary sets $\partial Q_{i,1}$, $\partial Q_{i,2}$ are defined similarly.

    We introduce two distinct classes of orbits to cover $Q_{1}$. For any $z \in \mathring{Q}_{1} \cup \partial Q_{1,1}$, we define the first class of orbits starting from $z$ as
    \begin{equation*}
     \Gamma_{1,1}(z,n) = z + \Big\lfloor \frac{n+1}{2} \Big\rfloor \, l + \Big\lfloor \frac{n}{2} \Big\rfloor \, l^{\perp},  \quad n \in \N,
    \end{equation*}
    where $\lfloor x \rfloor$ is the largest integer that is less than or equal to $x$. Similarly, for any $z \in \mathring{Q}_{1} \cup \partial Q_{1,2} $, we define the second class of orbits starting from $z$ as
    \begin{equation*}
     \Gamma_{1,2}(z,n) = z + \Big\lfloor \frac{n}{2} \Big\rfloor \, l + \Big\lfloor \frac{n+1}{2} \Big\rfloor \, l^{\perp} ,  \quad n \in \N.
    \end{equation*}
    By the definition of $\mathring{Q}_{1}$, we know that $\Gamma_{1,1}(z,n)$ and $\Gamma_{1,2}(z,n)$ belong to $\mathring{Q}_{1}$ for any $n \geq 1$. The orbits $\Gamma_{i,1}(z,\cdot)$ and $\Gamma_{i,2}(z,\cdot)$ on $Q_i$ can be defined similarly to the orbits on $Q_1$; that is, $\Gamma_{i,1}(z,\cdot)$ and $\Gamma_{i,2}(z,\cdot)$ start from $z \in Q_{i}$ and move alternately along the base vectors of $Q_i$. We also have $\Gamma_{i,1}(z,n)$ and $\Gamma_{i,2}(z,n)$ belong to $\mathring{Q}_i$ for any $n \geq 1$.

    Let $I_{1,1} := \{z \in \mathring{Q}_{1} ; \, z-l^{\perp} \notin Q_1 \} \cup \partial Q_{1,1}$ and $I_{1,2} := \{z \in \mathring{Q}_{1} ; \, z-l \notin Q_1 \} \cup \partial Q_{1,2}$. We assert that the orbits starting from $I_{1,1} $ and $I_{1,2}$ cover $\mathring{Q}_{1}$ exactly twice (see details in Step 3). Figure \ref{fig: decomposition} provides a clear example of the concepts we have defined so far.
    \begin{figure}[ht]
    \centering
    \includegraphics[width=15cm,height=10cm]{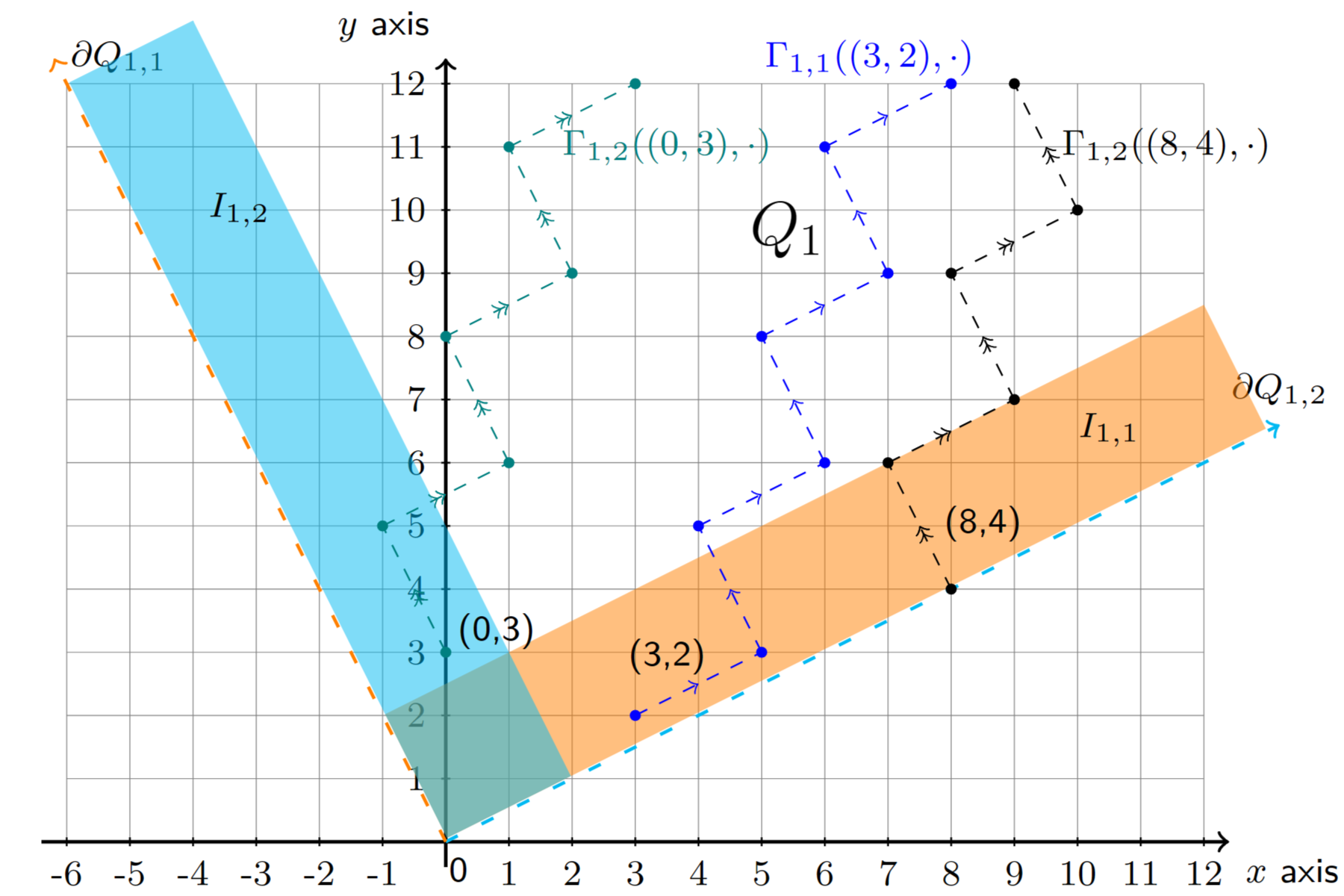}
    \caption{The decomposition of $Q_1$ for $l=(2,1)$} \label{fig: decomposition}
    \end{figure}

    \textbf{Step 2: estimates of orbits.} Let $\O_{1} = \{\Gamma_{1,1}(z,\cdot); z \in \mathring{Q}_{1} \cup \partial Q_{1,1} \} \cup \{\Gamma_{1,2}(z,\cdot); z \in \mathring{Q}_{1} \cup \partial Q_{1,2} \}$ be the collection of orbits on quadrant $Q_1$. Recall that $Y_k= \E|\hat u_k|^2,\, k\in \Z^2_0$. We assert that the orbit $O(\cdot) \in \mathcal{O}_{1}$ satisfies, for any $p>1$,
    \begin{equation} \label{eq: estimate-orbit-0}
        \sum_{n \in \N} |\Pi^{\perp}_{O(n+1)-O(n)} O(n)|^2 \big(Y_{O(n+1)}^{p-1}-Y_{O(n)}^{p-1}\big) \big(Y_{O(n+1)}-Y_{O(n)} \big) \geq \frac{p-1}{8 p^2 |l|^2 } \sum_{n \in \N} Y_{O(n)}^{p} .
    \end{equation}
    For simplification, we denote $\Delta O(n)= O(n+1)-O(n)$ and $ \Pi^{\perp}_{\Delta} O(n):=\Pi^{\perp}_{O(n+1)-O(n)} O(n)$ for all orbit $O(\cdot) \in \O_1$ and any $n \in \N$. By Lemma \ref{lem: sequence sum}, to prove the estimate \eqref{eq: estimate-orbit-0}, we only need to show
    \begin{equation} \label{eq: estimate-orbit-project-0}
        |\Pi^{\perp}_{\Delta} O(n)|^2= |\Pi^{\perp}_{O(n+1)-O(n)} O(n)|^2 \geq \frac{(n+1)^2}{4 |l|^2}, \quad \forall  n \in  \N.
    \end{equation}
    Recall the construction of the orbits; we know that $\Delta O(n)$ equals $l$ or $l^{\perp}$. We first prove the inequality \eqref{eq: estimate-orbit-project-0} for the first class of orbits $\Gamma_{1,1}(z,\cdot)$. When $n$ is even, we have
    \begin{equation*}
        \big|\Pi^{\perp}_{\Delta} O(n) \big|^2 = \big| \Pi^{\perp}_{l} \Gamma_{1,1}(z,n) \big|^2 =\big|\Pi^{\perp}_{l} z \big|^2+n \, \Pi^{\perp}_{l} z \cdot l^{\perp} + \frac{n^2}{4} \big|l^{\perp} \big|^2 .
    \end{equation*}
    Due to $z=a(z) \, l+ b(z) \, l^{\perp} \in \mathring{Q}_{1} \cup \partial Q_{1,1}$, we have $b(z)>0$, then $\Pi^{\perp}_{l} z \cdot l^{\perp} =  b(z) \, \big| l^{\perp} \big|^2 > 0$. Since $z,l$, and $l^{\perp}$ belong to $\Z_0^2$, we can conclude that $\Pi^{\perp}_{l} z \cdot l^{\perp}= z \cdot l^{\perp}$ is a positive integer. This implies that $\Pi^{\perp}_{l} z \cdot l^{\perp} \geq 1$. Similarly, due to $b(z)>0$, we have
    \begin{equation*}
       |\Pi^{\perp}_{l} z|^2 = \Big( z - \frac{z \cdot l}{|l|^2} l\Big) \cdot \Big( z - \frac{z \cdot l}{|l|^2} l\Big)  = \frac{|z|^2|l|^2-(z\cdot l)^2}{|l|^2} \geq \frac{1}{|l|^2}.
    \end{equation*}
    Thus, for even $n\in \N$,
    \begin{equation*}
        |\Pi^{\perp}_{\Delta} O(n)|^2 \geq \frac{1}{|l|^2} \Big(1+n+\frac{n^2}{4} \Big) \geq \frac{(n+1)^2}{4|l|^2 }.
    \end{equation*}
    When $n$ is odd, similar to the estimates as $n$ is even, we have
    \begin{equation*}
        |\Pi^{\perp}_{\Delta} O(n)|^2 = \big|\Pi^{\perp}_{l^{\perp}} z \big|^2 + (n+1) \, \Pi^{\perp}_{l^{\perp}} z \cdot l + \frac{(n+1)^2}{4} \big| l \big|^2 \geq  \frac{(n+1)^2}{4|l|^2 }.
    \end{equation*}
    Combining both estimates above, we obtain that inequality \eqref{eq: estimate-orbit-project-0} holds for all the first class orbits. In the same way, inequality \eqref{eq: estimate-orbit-project-0} also holds for the second class orbits. Using Lemma \ref{lem: sequence sum}, we can derive the inequality \eqref{eq: estimate-orbit-0} for all $O(\cdot) \in \O_{1}$. Similarly, the inequality \eqref{eq: estimate-orbit-0} also holds for all orbits on the quadrant $Q_i$, where $i=2,3,4$.

    \textbf{Step 3: covering $\Z_0^2$ exactly twice.} Let $\Bar{\O}_{1} = \{\Gamma_{1,1}(z,\cdot); z \in I_{1,1} \} \cup \{\Gamma_{1,2}(z,\cdot); z \in I_{1,2} \}$ be the set of all the orbits starting from $I_{1,1}$ and $I_{1,2}$. Recall the definitions of
    \begin{align*}
        I_{1,1} & = \{z \in \mathring{Q}_{1} ; \, z-l^{\perp} \notin Q_1 \} \cup \partial Q_{1,1}, \\
        I_{1,2} & = \{z \in \mathring{Q}_{1} ; \, z-l \notin Q_1 \} \cup \partial Q_{1,2}.
    \end{align*}
    For any $z \in \partial Q_{1,1} \cup \partial Q_{1,2}$, only one orbit in $ \Bar{\O}_1$ passes $z$, and it is the orbit starting from $z$. Taking into account the fact that the boundary sets will be counted twice in different quadrants, to prove that the orbits in $\cup_{i=1}^4 \Bar{\O}_i$ cover $\Z_0^2$ exactly twice, it suffices to prove that the orbits belonging to $\Bar{\O}_i$ (defined similarly as $\Bar{\O}_1$) cover $\mathring{Q}_{i}$ exactly twice.

     We say that $O_1 \preceq O_2$ if there exists $m \in \N$ such that $O_1(\cdot) = O_2(\cdot + m)$. Then $(\O_{1},\preceq)$ is a partially ordered set, and we assert that $\Bar{\O}_{1}$ is the set of all the maximal element of $(\O_{1},\preceq)$. For any orbit $O(\cdot) \in \O_1$ and $m \in \N$, we can define the orbit $O(m, \cdot)$ as follows:
    \begin{equation*}
        O(m, n)= \begin{cases}
        O(0)+ \lfloor \frac{n-m+1}{2} \rfloor \big(O(1)-O(0)\big) + \lfloor \frac{n-m}{2} \rfloor \big(O(2)-O(1)\big), \quad n \geq m; \\
        O(0) - \lfloor \frac{m-n}{2} \rfloor \big(O(1)-O(0)\big) - \lfloor \frac{m-n+1}{2} \rfloor \big(O(2)-O(1)\big), \quad n \leq m-1.
        \end{cases}
    \end{equation*}
    Let $m_0$ be the largest integer such that $O(m_0,n) \in \O_{1}$. It is clear that $m_0$ is finite and $O(m_0,\cdot)$ is the only maximal element corresponding to $O(\cdot)$. If $O(m_0,1)-O(m_0,0)=l$, then $O(m_0,0) \in \partial Q_{1,1}$ or $O(m_0,0) \in \mathring{Q}_1$ but $O(m_0,0) - l^{\perp} \notin Q_1$, so $O(m_0,0) \in I_{1,1}$ and $O(m_0,\cdot) \in \Bar{\O}_1$. Similarly, if $O(m_0,1)-O(m_0,0)=l^{\perp}$, then $O(m_0,\cdot)$ also belongs to $\Bar{\O}_1$.

    It is clear that for any $O_1, O_2 \in \Bar{\O}_1$, if $O_1(m)=O_2(n) $ and $O_1(m+1)=O_2(n+1)$ for some $m,n \in \N$, then $O_1=O_2$. Then using this property and the fact that $\Bar{\O}_{1}$ is the set of all maximal elements in $(\O_{1},\preceq)$, we know that the number of orbits in $\Bar{\O}_1$ passing $z \in \mathring{Q}_{1}$ is equal to the number of the orbits in $\O_1$ starting from $z$. So orbits in $\Bar{\O}_1$ cover $\mathring{Q}_{1}$ exactly twice. As the above discussions, the orbits in $\cup_{i=1}^4 \Bar{\O}_i$ cover $\Z_0^2$ exactly twice.

    \textbf{Step 4: the final step.} Applying inequality \eqref{eq: estimate-orbit-0} to all orbits in $\cup_{i=1}^4 \Bar{\O}_i$ and using the fact that the orbits in $\cup_{i=1}^4 \Bar{\O}_i$ cover $\Z_0^2$ exactly twice, we obtain
    \begin{equation} \label{eq: estimate-orbit-cover-0}
        \sum_{i=1}^{4} \sum_{O \in \Bar{\O}_{i}} \sum_{n \in \N} |\Pi^{\perp}_{\Delta} O(n)|^2 \big(Y_{O(n+1)}^{p-1}-Y_{O(n)}^{p-1}\big) \big(Y_{O(n+1)}-Y_{O(n)} \big)
        \geq \frac{p-1}{4 p^2 |l|^2 } \sum_{k \in \Z_0^2} Y_{k}^{p}.
    \end{equation}
    We denote the summands of the right-hand side of inequality \eqref{eq: estimate-for-gronwall} as
    \begin{equation*}
         S_{k,l}:=  \theta_{l}^2 \,  \big| \Pi^{\perp}_{l} k \big|^2  \big(Y_{k+l}^{p-1}-Y_{k}^{p-1} \big) \big(Y_{k+l}-Y_{k} \big).
    \end{equation*}
   Note that the orbits in $\cup_{i=1}^4 \Bar{\O}_i$ only going in two directions (like orbits in $\Bar{\O}_1$ only moving along $l$ and $l^{\perp}$, but not $-l$ or $-l^{\perp}$); using the fact that $S_{k,l}=S_{k+l,-l}$, we have
    \begin{align*}
        & \sum_{k \in \Z_0^2 } \, \sum_{h=l,l^{\perp},-l,-l^{\perp} } \theta_{h}^2 \, \big|\Pi^{\perp}_{h} k \big|^2  \big(Y_{k+h}^{p-1}-Y_{k}^{p-1} \big) \big(Y_{k+h}-Y_{k} \big) \\
        & \quad \geq 2 \, \theta_{l}^2 \sum_{i=1}^{4} \sum_{O \in \Bar{\O}_{h,i}} \sum_{n \in \N} |\Pi^{\perp}_{\Delta} O(n)|^2 \big(Y_{O(n+1)}^{p-1}-Y_{O(n)}^{p-1}\big) \big(Y_{O(n+1)}-Y_{O(n)} \big).
    \end{align*}
    Using inequality \eqref{eq: estimate-orbit-cover-0}, we obtain
    \begin{align*}
        & \sum_{l \in \Z_0^2} \sum_{k \in \Z_0^2 } \, \theta_{l}^2 \, \big|\Pi^{\perp}_{l} k \big|^2  \big(Y_{k+l}^{p-1}-Y_{k}^{p-1} \big) \big(Y_{k+l}-Y_{k} \big) \\
        & = \frac{1}{4} \sum_{l \in \Z_0^2} \sum_{k \in \Z_0^2 } \, \sum_{h=l,l^{\perp},-l,-l^{\perp} } \theta_{h}^2 \, \big|\Pi^{\perp}_{h} k \big|^2  \big(Y_{k+h}^{p-1}-Y_{k}^{p-1} \big) \big(Y_{k+h}-Y_{k} \big) \\
        &  \geq \frac{p-1}{8 p^2 } \sum_{l \in \Z_0^2} \frac{\theta_{l}^2}{|l|^2} \sum_{k \in \Z_0^2} Y_{k}^{p}=  \frac{p-1}{8 p^2 } \| \theta \|_{h^{-1}}^2 \sum_{k \in \Z_0^2} Y_{k}^{p}.
    \end{align*}
    By Lemma \ref{lem: sum-Fourier-ODE} and Gr\"onwall inequality, we get inequality \eqref{eq: Fourier-coefficient-exp-decay} when $d=2$.

    \textbf{The difference between $d=2$ and $d \geq 3$.} When $d=2$, we can conveniently choose $l^{\perp}$ for any fixed $l \in \Z_0^2$ and use them to construct orbits. However, due to the more intricate structure of $\Z_0^d$ when $d \geq 3$, typically we can only choose $l_2 \in \Z_0^d$ that are linearly independent with fixed $l_1 \in \Z_0^d$. Recall the notations in Section \ref{subsec: setting}. For $d \geq 3$, we will decompose $\Z_0^d$ into $\cup_{h \in P_Z^{\perp}(l_1,l_2)} \S_{h}(l_1,l_2)$ and handle each plane $\S_{h}(l_1,l_2)$ similarly as in the case of $d=2$. When $h \in P_Z^{\perp}(l_1,l_2) \cap \Z_0^d$, we need to treat it additionally because the estimate when $d=2$ does not include it. Since $l_1$ and $l_2$ are not orthogonal, the decomposition and estimation will be more complicated than $d=2$.
\end{proof}

Now we apply Theorem \ref{thm: Fourier-coefficient-exp-decay} and H\"older's inequality to prove the mixing properties of solutions to stochastic transport equation \eqref{STE-Ito} in the average sense. Firstly, we will prove the mixing property of $u$ in the good case, then, using Lemma \ref{lem: convegence}, we get the mixing property in the general case by an approximation argument.

\begin{proof}[Proof of Theorem \ref{thm: mixing-exp}]

We first consider the good case that $\supp \theta$ is compact and the initial data $u_0$ is smooth. In this case, the solution $u$ to \eqref{STE-Ito} satisfies the conditions of Theorem \ref{thm: Fourier-coefficient-exp-decay}.

When $0<\beta \leq \frac{d}{4}$, take $1<p < \frac{d}{d-2\beta}$, $q =\frac{p}{p-1}> \frac{d}{2\beta}$. By H\"older inequality, it holds
\begin{align*}
    \E \| u(t) \|_{H^{-\beta}}^2 & = \E \sum_{k} \frac{|\hat{u}_{k}(t)|^2}{|2 \pi k|^{2\beta}} \leq \Big(\sum_{k} \big(\E |\hat{u}_{k}(t)|^2 \big)^p \Big)^{1/p} \Big(\sum_{k} \frac{1}{|2 \pi k|^{2\beta \,q}}\Big)^{1/q} \\
    & \leq C e^{-\kappa C(\theta,d) \frac{p-1}{p^2} t} \Big( \sum_{k} |\hat{u}_{k}(0)|^{2p} \Big)^{1/p},
\end{align*}
where the last step is due to Theorem \ref{thm: Fourier-coefficient-exp-decay} and the constant $C(\theta,d)$ is defined in \eqref{def: C-theta-d}. For any $\epsilon \in (0,\frac{\beta(d-2\beta)}{d^2})$, choosing $p$ such that $ \frac{1}{p}=\frac{d-2\beta}{d}+\epsilon<1$, we obtain
\begin{align*}
    \E \| u(t) \|_{H^{-\beta}}^2 & \leq C e^{-\kappa C(\theta,d) \, \left( \frac{1}{4}-(\frac{d-2\beta}{d}+\epsilon-\frac{1}{2})^2 \right) t } \, \sup_{k} |\hat{u}_{k}(0)|^{\frac{2p-2}{p}}  \, \Big(\sum_{k} |\hat{u}_{k}(0)|^{2} \Big)^{1/p} \\
    & \leq  C e^{-2 \kappa C(\theta,d) \, \big(\frac{\beta (d-2\beta)}{d^2}-\epsilon \big) \, t} \, \|u(0)\|_{L^2}^{2},
\end{align*}
where the last step is due to the fact $\sup_{k} |\hat{u}_{k}(0)|^{2} \leq \sum_{k} |\hat{u}_{k}(0)|^{2}= \|u_0\|_{L^2}^{2}$.

When $\beta>\frac{d}{4}$, $\sum_{k} \frac{1}{|k|^{4\beta}}<+\infty$. Using H\"older inequality and Theorem \ref{thm: Fourier-coefficient-exp-decay}, we obtain
\begin{align*}
    \E \| u(t) \|_{H^{-\beta}}^2 & = \E \sum_{k} \frac{|\hat{u}_{k}(t)|^2}{|2 \pi k|^{2\beta}} \leq  \Big(\sum_{k} \frac{1}{|2 \pi k|^{4\beta}} \Big)^{1/2} \Big( \sum_{k} \big(\E|\hat{u}_{k}(t)|^2 \big)^2 \Big)^{1/2} \\
    & \leq C e^{- \kappa C(\theta,d) \,t /4} \Big( \sum_{k} |\hat{u}_{k}(0)|^4 \Big)^{1/2} .
\end{align*}
Due to $\sum_{k} |\hat{u}_{k}(0)|^4 \leq \sup_{k} |\hat{u}_{k}(0)|^{2} \sum_{k} |\hat{u}_{k}(0)|^{2} \leq \|u_0\|_{L^2}^{4}$, we get the desired result.

For the general case, we will choose a sequence $\{u^n\}$ satisfying the above estimates and converging to the solution $u$ of \eqref{STE-Ito}. Let $u_0^n \in C^{\infty}(\T^d)$ and $u_0^n$ converge to $u_0$ in $L^2(\T^d)$ as $n$ tends to $+\infty$. For any integer $n>0$ and vector $l \in \Z_0^d$, we set
    $$\theta^{(n)}_{l}=\frac{1}{ \big(\sum_{|l| \leq n } \theta_{l}^2 \big)^{1/2} } \, \theta_{l} \, 1_{ \{|l| \leq n\}},$$
    then $\theta^{(n)}_{l}$ converges to $\theta_{l}$ point-wise. Let $u^n$ be the solution to
    \begin{equation*}
        \d u^n = \kappa \Delta u^n \d t + \sqrt{C_d \kappa} \sum_{k,i} \theta_{k}^{(n)} e_{k} a_{k,i} \cdot \nabla u^n \d W_{t}^{k,i}, \quad u^n(0)=u_0^n.
    \end{equation*}
    Then Theorem \ref{thm: mixing-exp} (i) and (ii) hold for $u^n$ with constant $C(\theta^{(n)},d)$; it is clearly that $C(\theta^{(n)},d)\to C(\theta,d)$ as $n\to \infty$. Using the pathwise uniqueness of equation \eqref{STE-Ito} (see \cite[Theorem 3]{Gal20}), we deduce that there exists a subsequence $\{u^{n_j}\}$ converging weakly to the solution $u$ of equation \eqref{STE-Ito} in $L^2(\Omega; L^2([0,T]; L^2(\T^d)))$ and also in $L^2(\Omega; L^2([0,T]; H^{-\beta}(\T^d)))$. Observe that the collection of processes in $L^2(\Omega; L^2([0,T]; H^{-\beta}(\T^d)))$ satisfying Theorem \ref{thm: mixing-exp} (i) (or (ii)) is a convex, closed subset. Therefore, it is also weakly closed (see Lemma \ref{lem: convegence}), which implies that Theorem \ref{thm: mixing-exp} (i) (or (ii)) also holds for the weak limit $u$.
\end{proof}

We finish this subsection with some heuristic discussions, trying to show that direct computation of negative Sobolev norms may not work for general noises.

\begin{remark}\label{rem-direct-computation}
Consider $d=2$; we deduce from equation \eqref{eq: Fourier ODE-1} and Lemma \ref{lem: matrix-property} that
\begin{equation*}
    \E \|u(t)\|_{H^{-1}}^2 - \|u_0\|_{H^{-1}}^2
    =  4 \kappa  \sum_k \sum_{l} \theta_{l}^2 \frac{(a_{l} \cdot k)^2}{|k|^2} \int_{0}^{t} \big( \E |\hat{u}_{k-l}(s)|^2 -\E |\hat{u}_{k}(s)|^2 \big) \, \d s.
\end{equation*}
Similarly to the treatment of $I_1(t)$ in the proof of Lemma \ref{lem: uniform-bounded} below, we have
\begin{align*}
    \E \|u(t)\|_{H^{-1}}^2 - \|u_0\|_{H^{-1}}^2
    & =  2\kappa  \sum_k \sum_{l} \theta_{l}^2 (a_{l} \cdot k)^2 \Big(\frac{1}{|k+l|^2} + \frac{1}{|k-l|^2} - \frac{2}{|k|^2}\Big) \, \int_{0}^{t}  \E |\hat{u}_{k}(s)|^2 \, \d s \\
    & = 2\kappa  \sum_k \sum_{l} \theta_{l}^2 \frac{(a_{l} \cdot k)^2}{|k|^2} \Big(\frac{ 2(|k|^2+|l|^2) |k|^2} {(|k|^2+|l|^2)^2-4 (l \cdot k)^2 }-2 \Big) \, \int_{0}^{t}  \E |\hat{u}_{k}(s)|^2 \, \d s.
\end{align*}
Consider the special case in which $\supp\, \theta= \{(\pm 1, 0), (0,\pm 1)\}$ consists of four points and $\theta_l=\frac{1}{2}$ for every $l\in \supp\, \theta$; then for any $k=(k_0,k_0) \in \Z_0^2$, we have
\begin{equation*}
    \sum_{l} \theta_{l}^2 \frac{(a_{l} \cdot k)^2}{|k|^2} \Big(\frac{2(|k|^2+|l|^2)|k|^2}{(|k|^2+|l|^2)^2-4 (l \cdot k)^2 }-2 \Big) = \frac{4k_0^4 +2k_0^2}{4 k_0^4+1} -1 \geq \frac{1}{2 k_0^2+3}>0.
\end{equation*}
Suppose we are in the extreme setting where the energy is totally concentrated on the diagonal part, that is
\begin{equation*}
    \Big\{k \in \Z_0^2; \ \int_{0}^{t} \E |\hat{u}_{k}(s)|^2 \, \d s \neq 0 \Big\} \subset A_{diag}:=\{k=(k_0,k_0); \ k_0 \in \Z \},
\end{equation*}
then the expectation $\E \|u(t)\|_{H^{-1}}^2$ is increasing. If $\{k \in \Z_0^2; \,|\hat{u}_{k}(0)|^2 \neq 0 \} \subset A_{diag}$, it is reasonable to expect that the distribution of energy spectrum will not change too much in a short time, which is close to $A_{diag}$, and thus $\E \|u(t)\|_{H^{-1}}^2$ will increase during this short time.
\end{remark}

\subsection{Mixing under $\P$-almost sure sense} \label{subsec: mixing-P-a.s.}

This section will discuss the mixing properties of solution $u$ to \eqref{STE-Ito} in $\P$-a.s. sense. First, we will prove that, for some small $t_0>0$, the map $F(u_0) :=\{u(t)\}_{t\in [0,t_0]} $ is bounded from $H^{-1}(\T^d)$ to $C([0,t_0]; H^{-1}(\T^d))$. Then, using the Markov property of $u$ and the mixing in the average sense, we obtain a uniform decay rate of $\|u\|_{H^{-1}}^2$ over the time intervals $[n t_0,(n+1) t_0]$ for all $n \geq 0$. Finally, we prove the mixing properties in $\P$-a.s sense by Borel-Cantelli lemma.

\begin{lemma} \label{lem: uniform-bounded}
    If initial data $u_0 \in L^2(\T^d)$ and noise coefficient $\theta$ satisfies $ \| \theta \|_{h^1}^2 < + \infty$, then the solution $u$ to the stochastic transport equation \eqref{STE-Ito} satisfies
    \begin{equation*}
        \E \Big[ \sup_{t \in [0,t_0]} \| u(t) \|_{H^{-1}}^2 \Big] \leq 2 \|u_0\|_{H^{-1}}^2,
    \end{equation*}
    where $t_0= \big(\frac{\sqrt{11}-3}{16} \big)^2 \frac{1}{ \pi^2 \, d \, \kappa \, \| \theta \|_{h^1}^2 }>0$.
\end{lemma}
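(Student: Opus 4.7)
I will apply It\^o's formula to $\|u(t)\|_{H^{-1}}^2$ by summing \eqref{eq: Fourier-coefficient-norm} over $k\in\Z^d_0$ against the weights $|2\pi k|^{-2}$, producing the semimartingale decomposition
\begin{equation*}
\|u(t)\|_{H^{-1}}^2 = \|u_0\|_{H^{-1}}^2 + \int_0^t D(u(s))\,\d s + N(t),
\end{equation*}
where $D$ is the drift and $N$ is a local martingale. The strategy is to bootstrap: show that both the integrated drift and the running supremum of the martingale are controlled by \emph{small} multiples of $\E\bigl[\sup_{s\le t_0}\|u(s)\|_{H^{-1}}^2\bigr]$ itself, then rearrange. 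The crucial point (and the reason for the assumption $\|\theta\|_{h^1}^2<\infty$) is that both bounds must be phrased in terms of $\|u\|_{H^{-1}}^2$, not $\|u\|_{L^2}^2$.

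For the drift, using the symmetries $\theta_{-l}=\theta_l$, $a_{-l,i}=a_{l,i}$ to symmetrize in $\pm l$ and Lemma~\ref{lem: matrix-property} to absorb the $-2\kappa\|u\|_{L^2}^2$ term, I arrive at
\begin{equation*}
D(u) = \kappa C_d \sum_{m} |\hat u_m|^2 \sum_{l,i} \theta_l^2 (a_{l,i}\cdot m)^2 \left(\frac{1}{|m+l|^2}+\frac{1}{|m-l|^2}-\frac{2}{|m|^2}\right).
\end{equation*}
A direct algebraic computation rewrites the bracket as $\bigl[-2|l|^2(|m|^2+|l|^2)+8(m\cdot l)^2\bigr]\big/\bigl(|m|^2|m+l|^2|m-l|^2\bigr)$. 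Combining this with the key bounds $\sum_i(a_{l,i}\cdot m)^2 = |m|^2-(m\cdot l)^2/|l|^2 \le \min(|m|^2,|m+l|^2,|m-l|^2)$ (the last two following from $a_{l,i}\perp l$) together with $\max(|m+l|^2,|m-l|^2)\ge \tfrac12(|m+l|^2+|m-l|^2)=|m|^2+|l|^2$ yields the pointwise estimate $|m|^2|T(m)|\lesssim \|\theta\|_{h^1}^2$, so that $|D(u)|\le c_1 \pi^2 d\,\kappa\|\theta\|_{h^1}^2\,\|u\|_{H^{-1}}^2$.

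For the martingale $N(t)=\sum_k|2\pi k|^{-2}M_k(t)$, reindexing the second term of $M_k$ by $k\mapsto k-l$ collapses the two contributions into a single bilinear form with coefficient
\begin{equation*}
f_{l,i}(u) = \sum_k \frac{(a_{l,i}\cdot k)(|l|^2-2\,k\cdot l)}{|k|^2|k-l|^2}\,\overline{\hat u_k}\,\hat u_{k-l}.
\end{equation*}
The identity $|l|^2-2k\cdot l = |k-l|^2-|k|^2$ gives $(|l|^2-2k\cdot l)^2\le 4|l|^2\max(|k|^2,|k-l|^2)$, so Cauchy--Schwarz in $k$ (splitting off one factor of $|k-l|^{-1}|\hat u_{k-l}|$) together with $(a_{l,i}\cdot k)^2\le \min(|k|^2,|k-l|^2)$ produces the ``magic'' cancellation $(a_{l,i}\cdot k)^2(|l|^2-2k\cdot l)^2\le 4|l|^2|k|^2|k-l|^2$, hence $\sum_{l,i}\theta_l^2|f_{l,i}|^2 \le c_2\pi^2 d\,\|\theta\|_{h^1}^2\,\|u\|_{H^{-1}}^4$ and $\d[N]_t/\d t \le c_3\pi^2 d\,\kappa\|\theta\|_{h^1}^2\,\|u\|_{H^{-1}}^4$. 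Doob's $L^2$-inequality combined with Cauchy--Schwarz then gives
\begin{equation*}
\E\bigl[\sup\nolimits_{t\le t_0}|N(t)|\bigr] \le c_4\sqrt{\pi^2 d\,\kappa\|\theta\|_{h^1}^2\,t_0}\,\E\bigl[\sup\nolimits_{s\le t_0}\|u(s)\|_{H^{-1}}^2\bigr].
\end{equation*}

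Adding the two estimates yields $\E[\sup_{t\le t_0}\|u\|_{H^{-1}}^2]\le \|u_0\|_{H^{-1}}^2+(c_1 y+c_4\sqrt{y})\E[\sup]$ with $y:=\pi^2 d\,\kappa\|\theta\|_{h^1}^2\,t_0$; the explicit $t_0$ in the lemma is exactly the value for which $16\sqrt{y}=\sqrt{11}-3$, and the identity $(\sqrt{11}-3)^2+6(\sqrt{11}-3)=2$ is what ensures $c_1 y+c_4\sqrt{y}\le 1/2$, giving the factor $2$ after rearrangement. To justify the rearrangement one needs $\E[\sup_{t\le t_0}\|u\|_{H^{-1}}^2]<\infty$ a priori; I will first prove the inequality for smooth $u_0$ and compactly supported $\theta$, where Lemma~\ref{lem: flow} guarantees this finiteness, and then approximate by $u_0^n\to u_0$ in $L^2$ and $\theta^{(n)}\to\theta$ in $h^1$, passing to the limit via Lemma~\ref{lem: convegence}. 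The main obstacle is obtaining the pointwise bound $|m|^2|T(m)|\lesssim \|\theta\|_{h^1}^2$ for the drift: the naive estimate $|T(m)|\lesssim 1$ only controls $D(u)$ by $\|u\|_{L^2}^2$, which is of the wrong order and cannot close the bootstrap at the $H^{-1}$ level; it is precisely the use of the three-way comparison $\sum_i(a_{l,i}\cdot m)^2\le \min(|m|^2,|m\pm l|^2)$ that extracts the missing factor $|l|^2/|m|^2$.
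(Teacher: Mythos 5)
Your proposal follows essentially the same route as the paper's proof: the same drift/martingale decomposition of $\|u(t)\|_{H^{-1}}^2$ via the Fourier-coefficient equation, the same symmetrization in $\pm l$ leading to the second difference $\frac{1}{|k+l|^2}+\frac{1}{|k-l|^2}-\frac{2}{|k|^2}$, the same key inequality $(a_{l,i}\cdot k)^2\le\min\{|k|^2,|k+l|^2,|k-l|^2\}$ extracting the factor $|l|^2/|k|^2$ that keeps everything at the $H^{-1}$ level, Burkholder--Davis--Gundy plus Cauchy--Schwarz for the martingale, and the same absorption argument with the explicit $t_0$, done first in the regular case and then by approximation. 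The only cosmetic differences are that the paper cancels the $|l|^2$ part of the martingale coefficient exactly by an antisymmetry identity instead of estimating $|l|^2-2k\cdot l=|k-l|^2-|k|^2$ (both yield the same bound $4|l|^2|k|^2|k-l|^2$), and that it passes to general $(u_0,\theta)$ via tightness in $C([0,t_0];H^{-1}(\T^d))$ and truncation rather than via weak closedness of a convex set, the latter being slightly delicate here because $\E\sup_{t}\|\cdot\|_{H^{-1}}^2$ does not interact cleanly with weak limits in $L^2(\Omega\times[0,t_0])$.
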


\begin{proof}
    Firstly, we consider the good case where $\supp \theta \subset \Z_0^d$ is compact and the initial data $u_0$ is smooth. Recall the equation \eqref{eq: Fourier-coefficient-norm} and Lemma \ref{lem: matrix-property}, we obtain
    \begin{align*}
    \sum_k  \frac{|\hat{u}_{k}(t)|^2 }{|2 \pi k|^2} - \|u_0\|_{H^{-1}}^2
    &=  2 C_d \kappa  \sum_k \sum_{l,i} \theta_{l}^2 \frac{(a_{l,i} \cdot k)^2}{|k|^2} \int_{0}^{t} \big( |\hat{u}_{k-l}(s)|^2 -|\hat{u}_{k}(s)|^2 \big) \, \d s   \\
    & \quad +  \frac{{\rm i}}{2 \pi} \sqrt{C_d \kappa} \sum_k \int_{0}^{t}  \sum_{l,i} \theta_l \frac{a_{l,i} \cdot k}{|k|^2} ( \overline{\hat{u}_k} \hat{u}_{k-l}-\hat{u}_k \overline{\hat{u}_{k+l}} ) \, \d W_{s}^{l,i}.
    \end{align*}
    We denote the two terms on the right-hand side of the above equation as $I_1(t)$ and $I_2(t)$, respectively. For fixed $l \in \Z_0^d$, substituting $k$ for $\Tilde{k}+l$ in $I_1(t)$ yields
    \begin{equation*}
        I_1(t) = 2 C_d \kappa  \sum_{\Tilde{k}} \sum_{l,i} \theta_{l}^2 \frac{(a_{l,i} \cdot \Tilde{k})^2}{|\Tilde{k}+l|^2} \int_{0}^{t} \big( |\hat{u}_{\Tilde{k}}(s)|^2 -|\hat{u}_{\Tilde{k}+l}(s)|^2 \big) \, \d s.
    \end{equation*}
    Substituting $l$ for $\Tilde{l}=-l$ in $I_1(t)$ and adding the two equations, we obtain
    \begin{align*}
         2 I_1(t) & = 2 C_d \kappa \int_0^t  \sum_{k} \sum_{l,i} \theta_{l}^2 (a_{l,i} \cdot (k+l))^2 \, \Big( \frac{1}{|k|^2}- \frac{1}{|k+l|^2} \Big) |\hat{u}_{k+l}(s)|^2 \d s \\
         & \quad + 2 C_d \kappa \int_0^t \sum_{l,i} \sum_{k} \theta_{l}^2 (a_{l,i} \cdot k)^2 \, \Big( \frac{1}{|k+l|^2}-\frac{1}{|k|^2} \Big) |\hat{u}_{k}(s)|^2 \d s.
    \end{align*}
    Substituting $k$ for $k-l$ in the first term of the right hand of the above identity, we get
    \begin{align*}
        I_1(t) & = 2 C_d \kappa \int_{0}^{t} \sum_k \sum_{l,i} \theta_{l}^2 (a_{l,i} \cdot k)^2 \Big(\frac{1}{|k+l|^2} + \frac{1}{|k-l|^2} - \frac{2}{|k|^2}\Big) |\hat{u}_{k}(s)|^2  \d s \\
        & = 2 C_d \kappa \int_{0}^{t} \sum_k \sum_{l,i} \theta_{l}^2 (a_{l,i} \cdot k)^2 \Big(\frac{-2k \cdot l - |l|^2}{|k|^2 |k+l|^2} + \frac{2k \cdot l - |l|^2}{|k|^2 |k-l|^2} \Big) |\hat{u}_{k}(s)|^2  \d s \\
        & \leq 2 C_d \kappa \int_{0}^{t} \sum_k \sum_{l,i} \theta_{l}^2 (a_{l,i} \cdot k)^2 \frac{8 (k \cdot l)^2}{|k|^2 |k+l|^2|k-l|^2} |\hat{u}_{k}(s)|^2  \d s.
    \end{align*}
    Due to $\frac{(a_{l,i} \cdot k)^2}{|k+l|^2|k-l|^2} \leq \min \{\frac{1}{|k+l|^2} , \frac{1}{|k-l|^2} \} \leq \frac{1}{|k|^2}$, we get
    \begin{align}
        \sup_{t \in [0,t_0]} I_1(t) & \leq 16 \, d \, \kappa  \| \theta \|_{h^1}^2 \sup_{t \in [0,t_0]}  \int_{0}^{t} \sum_{k} \frac{|\hat{u}_{k}(s)|^2}{|k|^2} \d s \nonumber \\
        & \leq 64 \pi^2 \, d \, \kappa   \| \theta \|_{h^1}^2 \big( \sup_{t \in [0,t_0]} \|u(t)\|_{H^{-1}}^2 \big) \, t_0 . \label{eq: uniform-bounded-1}
    \end{align}

    For fixed $l \in \Z_0^d$, let $k=\Tilde{k}+l$ and $k=\Bar{k} - l$, then the stochastic Fubini theorem yields
    \begin{align*}
        I_2(t) & =  \frac{{\rm i}}{2 \pi} \sqrt{C_d \kappa} \sum_{l,i} \theta_l \sum_{\Tilde{k}} (a_{l,i} \cdot \Tilde{k})  \int_{0}^{t} \frac{1}{|\Tilde{k}+l|^2} \hat{u}_{\Tilde{k}} \overline{\hat{u}_{\Tilde{k}+l}} \, \d W_{s}^{l,i} \\
        & \quad - \frac{{\rm i}}{2 \pi} \sqrt{C_d \kappa}  \sum_{l,i} \theta_l \sum_{\Bar{k}} (a_{l,i} \cdot \Bar{k})  \int_{0}^{t} \frac{1}{|\Bar{k}-l|^2}   \overline{\hat{u}_{\Bar{k}}} \hat{u}_{\Bar{k}-l} \, \d W_{s}^{l,i}.
    \end{align*}
    Adding the two different formula of $I_2(t)$, we obtain $I_2$ equals
    \begin{equation*}
        \frac{i}{4 \pi} \sqrt{C_d \kappa} \sum_k \sum_{l,i} \theta_l (a_{l,i} \cdot k)  \int_{0}^{t} \Big( \Big( \frac{1}{|k|^2}-\frac{1}{|k-l|^2} \Big)  \overline{\hat{u}_k} \hat{u}_{k-l}- \Big( \frac{1}{|k|^2}-\frac{1}{|k+l|^2} \Big) \hat{u}_k \overline{\hat{u}_{k+l}} \Big) \, \d W_{s}^{l,i}.
    \end{equation*}
    In the same way (taking $k=\Tilde{k}+l$ and $k=\Bar{k} - l$, and adding them), we have
    \begin{equation*}
        \sum_k \sum_{l,i} \theta_l (a_{l,i} \cdot k)  \int_{0}^{t} \Big( \frac{|l|^2}{|k|^2|k-l|^2} \overline{\hat{u}_k} \hat{u}_{k-l}- \frac{|l|^2}{|k|^2|k+l|^2} \hat{u}_k \overline{\hat{u}_{k+l}} \Big) \, \d W_{s}^{l,i} =0.
    \end{equation*}
    Combining the above equations, we get
    \begin{equation*}
        I_2(t) = - \frac{{\rm i}}{2 \pi} \sqrt{C_d \kappa} \sum_k \sum_{l,i} \theta_l \frac{(a_{l,i} \cdot k)(l \cdot k)}{|k|^2} \int_{0}^{t} \Big( \frac{1}{|k-l|^2}  \overline{\hat{u}_k} \hat{u}_{k-l}+ \frac{1}{|k+l|^2} \hat{u}_k \overline{\hat{u}_{k+l}} \Big) \, \d W_{s}^{l,i}.
    \end{equation*}

    Then by Burkholder-Davis-Gundy inequality,
    \begin{align*}
        & \E \sup_{t \in [0,t_0]} |I_2 (t)|   \leq 6 \, \E \sup_{t \in [0,t_0]} [I_2,I_2]_t^{1/2} \\
        &  \quad = \frac{3}{\pi} \sqrt{C_d \kappa} \, \E \left[  \sum_{l,i}  \theta_l^2 \int_{0}^{t_0} \Big|   \sum_k  \frac{(a_{l,i} \cdot k)(l \cdot k)}{|k|^2} \Big( \frac{1}{|k-l|^2}  \overline{\hat{u}_k} \hat{u}_{k-l}+ \frac{1}{|k+l|^2} \hat{u}_k \overline{\hat{u}_{k+l}} \Big) \Big|^2  \, \d s \right]^{1/2} \\
        & \quad \leq  \frac{6}{\pi} \sqrt{C_d \kappa} \, \E \left[  \sum_{l,i}  \theta_l^2 \int_{0}^{t_0} \Big(   \sum_k  \frac{(a_{l,i} \cdot k)(l \cdot k)}{|k|^2|k-l|^2}  |\hat{u}_k| |\hat{u}_{k-l}| \Big)^2  \, \d s \right]^{1/2} ,
    \end{align*}
    where in the last passage we used the inequality $(a+b)^2 \leq 2 a^2+ 2 b^2$ and the symmetry in $l$. Applying Cauchy inequality, it holds
    \begin{align}
        \E \sup_{t \in [0,t_0]} |I_2(t)| & \leq  \frac{6}{\pi} \sqrt{C_d \kappa} \E \left[  \sum_{l,i}  \theta_l^2  \int_{0}^{t_0} \Big( \sum_k  \frac{(l \cdot k)^2}{|k|^2} \frac{|\hat{u}_k|^2}{|k|^2} \Big) \Big( \sum_k  \frac{(a_{l,i} \cdot k)^2}{|k-l|^2} \frac{|\hat{u}_{k-l}|^2}{|k-l|^2} \Big) \, \d s \right]^{1/2} \nonumber \\
        & \leq \frac{6}{\pi} \sqrt{d \kappa} \E \left[  \sum_{l}  |l|^2 \theta_l^2  \int_{0}^{t_0} \Big( \sum_k \frac{|\hat{u}_k(s)|^2}{|k|^2} \Big)^2  \, \d s \right]^{1/2} \nonumber \\
        & \leq 24 \pi \sqrt{d \kappa} \, \big(\sum_{l}  |l|^2 \theta_l^2 \big)^{1/2} \, \E \Big[ \sup_{t \in [0,t_0]} \| u(t) \|_{H^{-1}}^2 \Big] \, t_0^{1/2}.  \label{eq: uniform-bounded-2}
    \end{align}

    Recall the choice of $t_0$, and combining estimates \eqref{eq: uniform-bounded-1} and \eqref{eq: uniform-bounded-2}, we obtain
    \begin{align*}
        \E \Big[ \sup_{t \in [0,t_0]} \| u(t) \|_{H^{-1}}^2 \Big] & \leq \E \Big[ \sup_{t \in [0,t_0]} |I_1(t)| \Big] + \E \Big[ \sup_{t \in [0,t_0]} |I_2(t)| \Big] +\|u_0\|_{H^{-1}}^2 \\
        & \leq \frac{1}{2} \, \E \Big[ \sup_{t \in [0,t_0]} \| u(t) \|_{H^{-1}}^2 \Big] + \|u_0\|_{H^{-1}}^2.
    \end{align*}
    So $\E \big[ \sup_{t \in [0,t_0]} \| u(t) \|_{H^{-1}}^2 \big] \leq 2 \|u_0\|_{H^{-1}}^2$, we get the desired result in the good case.

    For general case, we assume the initial data $u_0 \in L^2(\T^d)$ and noise coefficient $\theta$ satisfies $\sum_{l} |l|^2 \theta_l^2 < + \infty$. By applying the compactness method \cite{Simon} and following the proof of \cite[Theorem 2.2]{FGL} and \cite[Theorem A.2]{LuoTang23}, we can obtain a subsequence of stochastic processes $\{u^{n_j} (\cdot) \}_{j}$ satisfying Lemma \ref{lem: uniform-bounded} that is tight in $C([0,t_0]; H^{-1}(\T^d))$ and converges in distribution to a solution $\Tilde{u}$ of equation \eqref{STE-Ito}. The pathwise uniqueness of equation \eqref{STE-Ito} (see \cite[Theorem 3]{Gal20}) implies the distribution uniqueness. Then for any fixed $R>0$, we have
    \begin{equation*}
    \E \Big[ \sup_{t \in [0,t_0]} \| u(t) \|_{H^{-1}}^2
    \wedge R\Big] = \lim_{j \rightarrow + \infty} \E \Big[ \sup_{t \in [0,t_0]} \| u^{n_j}(t) \|_{H^{-1}}^2
    \wedge R\Big] \leq 2 \|u_0\|_{H^{-1}}^2.
    \end{equation*}
    Let $R \rightarrow +\infty$, we get the desired result.
\end{proof}

Using the Markov property of solution $u$ to stochastic transport equation \eqref{STE-Ito}, we will prove a uniformly decaying rate of $\|u\|_{H^{-1}}^2$ over the time interval $[nt_0, (n+1)t_0]$ with the help of Theorem \ref{thm: mixing-exp} and Lemma \ref{lem: uniform-bounded}. The definition of $C(\theta,d)$ can be found in \eqref{def: C-theta-d}.

\begin{lemma} \label{lem: uniform-decay}
    If initial data $u_0 \in L^2(\T^d)$ and noise coefficient $\theta$ satisfies $ \| \theta \|_{h^1}^2= \sum_{l} |l|^2 \theta_l^2 < + \infty$, then the solution $u$ of equation \eqref{STE-Ito} has a uniformly exponential mixing:
\begin{itemize}
    \item[\rm (i)] When $d=2$ or $3$, there exists a constant $C$ that is only dependent on $d$ such that
    $$\E \Big[\sup_{t \in [n t_0,(n+1)t_0]} \|  u(t) \|_{H^{-1}}^2 \Big] \leq C \, e^{- \kappa C(\theta,d) \, n t_0/4} \|u_0\|_{L^2}^{2}, \quad \forall n \in \N.$$

    \item[\rm (ii)] When $d \geq 4$, there exists a constant $C$ that is only dependent on $d$ such that
    $$\E \Big[\sup_{t \in [n t_0,(n+1)t_0]} \| u(t) \|_{H^{-1}}^2 \Big] \leq  C \, e^{- 2 \kappa \frac{d-3}{d^2} \, C(\theta,d) \,  n t_0} \|u_0\|_{L^2}^{2}, \quad \forall n \in \N.$$
\end{itemize}
\end{lemma}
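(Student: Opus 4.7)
The plan is to combine the one-step bound of Lemma \ref{lem: uniform-bounded} with the averaged exponential decay of Theorem \ref{thm: mixing-exp} via the Markov property of the solution to \eqref{STE-Ito}. Since the driving noise $W(t,x)$ has independent increments and the equation is time-homogeneous, the process $\{u(nt_0+s)\}_{s\ge 0}$, conditionally on $\mathcal F_{nt_0}$, has the same law as a solution to \eqref{STE-Ito} starting from the initial datum $u(nt_0,\cdot)\in L^2(\T^d)$ driven by a noise $\widetilde W$ independent of $\mathcal F_{nt_0}$ with the same space-time covariance. This is the point I expect to need the most care, but it follows from pathwise uniqueness (\cite[Theorem 3]{Gal20}) together with the $\mathcal F_{nt_0}$-measurability of $u(nt_0,\cdot)$.

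Assuming this Markov property, applying Lemma \ref{lem: uniform-bounded} on the shifted interval and taking conditional expectation yields
\begin{equation*}
\E\Big[\sup_{t\in[nt_0,(n+1)t_0]}\|u(t)\|_{H^{-1}}^2\,\Big|\,\mathcal F_{nt_0}\Big] \le 2\,\|u(nt_0)\|_{H^{-1}}^2,
\end{equation*}
and then, taking expectation,
\begin{equation*}
\E\Big[\sup_{t\in[nt_0,(n+1)t_0]}\|u(t)\|_{H^{-1}}^2\Big] \le 2\,\E\|u(nt_0)\|_{H^{-1}}^2.
\end{equation*}
The remaining task is simply to bound $\E\|u(nt_0)\|_{H^{-1}}^2$ using Theorem \ref{thm: mixing-exp} with $\beta=1$, splitting according to the dimension.

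For $d=2,3$ we have $\beta=1>d/4$, so part (ii) of Theorem \ref{thm: mixing-exp} applies directly and gives $\E\|u(nt_0)\|_{H^{-1}}^2 \le C\,e^{-\kappa C(\theta,d)\,nt_0/4}\|u_0\|_{L^2}^2$, which yields case (i) of the lemma with constant $2C$. For $d\ge 4$ we have $\beta=1\le d/4$, so part (i) of Theorem \ref{thm: mixing-exp} applies with $\frac{\beta(d-2\beta)}{d^2}=\frac{d-2}{d^2}$; choosing $\epsilon=\tfrac{1}{d^2}$, which satisfies $0<\epsilon<\frac{d-2}{d^2}$ precisely because $d>3$, we obtain $\frac{d-2}{d^2}-\epsilon=\frac{d-3}{d^2}$ and hence
\begin{equation*}
\E\|u(nt_0)\|_{H^{-1}}^2 \le C\,e^{-2\kappa C(\theta,d)\,\frac{d-3}{d^2}\,nt_0}\|u_0\|_{L^2}^2,
\end{equation*}
which is case (ii) up to the factor of $2$. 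The only nontrivial ingredient is the Markov/restart argument; once that is in place the proof is a two-line composition of the two previous estimates with the right choice of $\epsilon$.
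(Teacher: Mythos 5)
Your proposal is correct and follows essentially the same route as the paper: the Markov property plus pathwise uniqueness reduce the supremum over $[nt_0,(n+1)t_0]$ to $2\,\E\|u(nt_0)\|_{H^{-1}}^2$ via Lemma \ref{lem: uniform-bounded}, and then Theorem \ref{thm: mixing-exp} with $\beta=1$ (part (ii) for $d=2,3$; part (i) with $\epsilon=1/d^2$ for $d\ge 4$) gives the stated rates. This is exactly the paper's argument, including the same choice of $\epsilon$.
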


\begin{proof}
    By the Markov property of solution $u$ and the pathwise uniqueness of the solution to the stochastic transport equation \eqref{STE-Ito}, it holds
    \begin{equation*}
        u (t,x,\omega)|_{u_0(\cdot)}=u(t-nt_0,x,\vartheta_{nt_0} \omega)|_{u(nt_0,\cdot,\omega)} , \quad \forall t \in [n t_0,(n+1) t_0],
    \end{equation*}
    where $\vartheta: \R_{+} \times \Omega \rightarrow \Omega$ is the Wiener shift in $\Omega$ and $u (t,x,\omega)|_{u_0(\cdot)}$ is the solution to equation \eqref{STE-Ito} with the initial data $u_0$. Then, by Lemma \ref{lem: uniform-bounded}, we have
    \begin{align*}
        \E \Big[ \sup_{t \in [n t_0,(n+1)t_0]} \|  u(t,x,\omega) \|_{H^{-1}}^2 \Big] & = \E \Big[ \E \Big( \sup_{s \in [0,t_0]} \big\|  u(s,x,\vartheta_{nt_0} \omega)|_{u(nt_0,\cdot,\omega)} \big\|_{H^{-1}}^2 \big| \, \mathcal{F}_{n t_0} \Big) \Big] \\
        & \leq 2 \, \E  \|  u(n t_0) \|_{H^{-1}}^2 .
    \end{align*}
    To achieve the desired result, we use Theorem \ref{thm: mixing-exp} with $\beta=1$ for $d=2$ and $3$; for higher dimensions, we apply the same theorem with $\beta=1$ and $\epsilon=\frac{1}{d^2} < \frac{d-2}{d^2}$.
\end{proof}

Recalling $C(\theta,d)$ defined in \eqref{def: C-theta-d} and $D(\theta,d)$ defined in \eqref{def: D}, we can get the uniform decay estimate of $\E \big[\sup_{t \in [n t_0,(n+1)t_0]} \|  u(t) \|_{H^{-1}}^2 \big]$ from Lemma \ref{lem: uniform-decay} as follows:
\begin{equation*}
    \E \Big[ \sup_{t \in [n t_0,(n+1)t_0]} \| u(t) \|_{H^{-1}}^2 \Big] \leq  C \, e^{- D(\theta,d) \,  n t_0} \|u_0\|_{L^2}^{2}, \quad \forall n \in \N.
\end{equation*}
Using this estimate and Borel-Cantelli lemma, we will get the mixing properties in $\P$-a.s sense.

\begin{proof}[Proof of Theorem \ref{thm: mixing-a.s.}]

Let $t_0= \big(\frac{\sqrt{11}-3}{16} \big)^2 \frac{1}{ \pi^2 \, d \, \kappa \, \| \theta \|_{h^1}^2 }>0$ and constant $C$ be the same as in Lemma \ref{lem: uniform-decay}. We define the events
  $$A_n= \bigg\{\omega\in \Omega: \sup_{t\in [n t_0, (n+1)t_0]} \|u(t) \|_{H^{-1}}^2 > C \|u_0\|_{L^2}^2 \, e^{-\lambda n t_0 } \bigg\}, \quad n \geq 0.$$
Then, by Chebyshev's inequality and Lemma \ref{lem: uniform-bounded}, we obtain
\begin{align*}
    \sum_{n=0}^{+\infty} \P(A_n) & \leq \sum_{n=0}^{+\infty} \frac{e^{\lambda n t_0}}{C \|u_0\|_{L^2}^2 } \E \Big(\sup_{t\in [nt_0, (n+1)t_0]} \|u(t) \|_{H^{-1}}^2 \Big) \\
    &\leq \sum_{n=0}^{+\infty} e^{-(D(\theta,d)  \, \kappa-\lambda) n t_0 } <+\infty.
\end{align*}
By Borel-Cantelli lemma, for $\P$-a.s. $\omega\in \Omega$, there exists a big $N(\omega)\geq 0 $ such that
  \begin{equation*}
      \sup_{t\in [nt_0, (n+1)t_0]} \|u(t) \|_{H^{-1}}^2 \leq C \, e^{-\lambda n t_0 } \|u_0\|_{L^2}^2 \quad \mbox{for all } n \geq N(\omega).
  \end{equation*}
For $0 \leq n<N(\omega)$, we have
  \begin{equation*}
      \sup_{t\in [nt_0, (n+1)t_0]} \|u(t) \|_{H^{-1}}^2  \leq \sup_{t\in [nt_0, (n+1)t_0]} \|u(t) \|_{L^2}^2 \leq \|u_0\|_{L^2}^2 \leq e^{\lambda  (N(\omega) +1) t_0 } \, e^{-\lambda (n+1) t_0 } \|u_0\|_{L^2}^2.
  \end{equation*}
Therefore, defining the random constant $C_{\kappa,\theta,d}(\omega) := e^{\lambda t_0 }  \max \{C,e^{\lambda ( N(\omega)+1) t_0 } \}  $, for any $t\geq 0$, there exists a $n \in \mathbb N$ such that $t \in [n t_0,(n+1)t_0)$, thus
\begin{equation*}
    \|u(t) \|_{H^{-1}}^2  \leq \sup_{t \in [n t_0,(n+1)t_0]} \|u(t) \|_{H^{-1}}^2 \leq C_{\kappa,\theta,d}(\omega) \, e^{-\lambda t }   \|u_0\|_{L^2}^2.
\end{equation*}

It remains to show that $C_{\kappa,\theta,d}(\omega)$ has finite $q$-moment, for which it is enough to prove this for $e^{\lambda N(\omega) t_0 }$. To this end, we need to estimate the tail probability $\P(\{N(\omega) \geq k\})$.
Note that $N(\omega)$ may be defined as the largest integer $n$ such that
\begin{equation*}
    \sup_{t\in [n t_0, (n+1)t_0]} \|u(t) \|_{H^{-1}}^2 > C e^{-\lambda n t_0 }  \|u_0\|_{L^2}^2\, ;
\end{equation*}
hence $\{\omega\in \Omega: N(\omega) \geq k\} = \bigcup_{n=k}^\infty A_n$. Then for any $k \in \N$, we have
\begin{equation*}
    \P(\{N(\omega) \geq k\}) \leq \sum_{n=k}^\infty \P(A_n) \leq \sum_{n=k}^\infty  e^{-(D(\theta,d)  \, \kappa-\lambda) n t_0 } \leq \frac{e^{-(D(\theta,d)  \, \kappa-\lambda) t_0 \, k }}{(D(\theta,d)  \, \kappa-\lambda) t_0} .
\end{equation*}
As a result, for any $0<q<\frac{D(\theta,d)  \, \kappa}{\lambda}-1$,
\begin{align*}
    \E e^{q \lambda  N(\omega) t_0 } & = \sum_{k=0}^{+\infty} e^{q \lambda t_0 k } \, \P(\{N(\omega) = k\}) \\
    & \leq \sum_{k=0}^{+\infty} e^{q \lambda t_0 k } \frac{e^{-(D(\theta,d)  \, \kappa-\lambda) t_0 \, k }}{(D(\theta,d)  \, \kappa-\lambda) t_0} <+ \infty.
\end{align*}
So $C(\omega)$ has finite $q$-th moment.
\end{proof}

\subsection{Positive Sobolev norm estimate} \label{subsec: positive-norm}

In this subsection, we show the exponential growth of $\E \|u(t)\|^2_{H^\beta}$ ($0<\beta \leq 1$), which will lead to a lower bound of the mixing rate of solution $u$.

\begin{theorem} \label{thm: positive-sobolev}
    If $\supp \theta \subset \Z_0^d$ is compact and the initial data $u_0$ is smooth, then the solution $u$ of the stochastic transport equation \eqref{STE-Ito} satisfies
    \begin{equation} \label{eq: positive-sobolev-1}
        \E\| u(t) \|_{H^1}^2 = e^{8 \pi^2 \kappa  \| \theta \|_{h^1}^2 t} \| u_0 \|_{H^1}^2, \quad t \in [0,+\infty).
    \end{equation}
    If $ \| \theta \|_{h^1}^2=\sum_{k} |k|^2 \theta_k^2 < + \infty$ and $u_0 \in H^{\beta}(\T^d)$ for some $\beta \in (0,1]$, then
    \begin{equation} \label{eq: positive-sobolev-2}
        \E\| u(t) \|_{H^{\beta}}^2 \leq e^{8 \pi^2 \kappa \beta  \| \theta \|_{h^1}^2 t} \| u_0 \|_{H^{\beta}}^2 .
    \end{equation}
\end{theorem}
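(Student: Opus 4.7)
The plan is to differentiate $\E\|u(t)\|_{H^\beta}^2 = \sum_k |2\pi k|^{2\beta} \E|\hat u_k(t)|^2$ in time using the infinite-dimensional ODE \eqref{eq: Fourier ODE-1} from Proposition \ref{prop: Fourier coefficient}, obtaining a closed identity (when $\beta=1$) or a Gr\"onwall-type differential inequality (when $0<\beta<1$). I work first in the good case $\supp\theta$ compact, $u_0$ smooth, where Lemma \ref{lem: flow} makes all series absolutely convergent and justifies exchange of sum and time derivative, exactly as in Lemma \ref{lem: sum-Fourier-ODE}. In the positive contribution $\sum_{l,i}\theta_l^2 (a_{l,i}\cdot k)^2\E|\hat u_{k-l}|^2$ I substitute $k = m+l$ and use $a_{l,i}\cdot l = 0$ to get weight $(a_{l,i}\cdot m)^2$; the symmetries $\theta_{-l}=\theta_l$ and $a_{-l,i}=a_{l,i}$ will then kill odd-in-$l$ cross terms.

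For $\beta=1$, expanding $|m+l|^2 = |m|^2 + 2m\cdot l + |l|^2$ eliminates the middle piece by $l\mapsto -l$ symmetry. Two applications of Lemma \ref{lem: matrix-property}, with weights $\theta_l^2$ giving $C_d\sum_{l,i}\theta_l^2(a_{l,i}\cdot m)^2 = |m|^2$ and with the symmetric weights $|l|^2\theta_l^2$ giving $C_d\sum_{l,i}|l|^2\theta_l^2(a_{l,i}\cdot m)^2 = \|\theta\|_{h^1}^2 |m|^2$, show that the $|m|^2$ contribution exactly cancels the dissipative term $-8\pi^2\kappa\sum_k|2\pi k|^2|k|^2\E|\hat u_k|^2$, leaving
\begin{equation*}
\frac{\d}{\d t}\E\|u(t)\|_{H^1}^2 = 8\pi^2\kappa\|\theta\|_{h^1}^2\,\E\|u(t)\|_{H^1}^2,
\end{equation*}
which integrates to the equality \eqref{eq: positive-sobolev-1}.

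For $0<\beta<1$ the same structure works once two convexity inputs have been installed. Concavity of $x\mapsto x^\beta$, combined with the $l\mapsto -l$ symmetry, yields
\begin{equation*}
\sum_{l,i}\theta_l^2|m+l|^{2\beta}(a_{l,i}\cdot m)^2 = \frac{1}{2}\sum_{l,i}\theta_l^2\big(|m+l|^{2\beta}+|m-l|^{2\beta}\big)(a_{l,i}\cdot m)^2 \leq \sum_{l,i}\theta_l^2(|m|^2+|l|^2)^\beta(a_{l,i}\cdot m)^2,
\end{equation*}
and the tangent-line bound $(1+x)^\beta\leq 1+\beta x$ (available since $m\in\Z_0^d$ forces $|m|\geq 1$) then gives $(|m|^2+|l|^2)^\beta\leq|m|^{2\beta}+\beta|m|^{2\beta-2}|l|^2$. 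Applying Lemma \ref{lem: matrix-property} to the two resulting pieces and cancelling the $|m|^{2\beta+2}$ part against the dissipative term produces
\begin{equation*}
\frac{\d}{\d t}\E\|u(t)\|_{H^\beta}^2 \leq 8\pi^2\kappa\beta\|\theta\|_{h^1}^2\,\E\|u(t)\|_{H^\beta}^2,
\end{equation*}
so Gr\"onwall yields \eqref{eq: positive-sobolev-2} in the good case.

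To pass to the general assumptions $\|\theta\|_{h^1}^2<\infty$ and $u_0\in H^\beta(\T^d)$, I truncate $\theta$ by $\theta^{(n)}_l = \theta_l\, 1_{\{|l|\leq n\}}\big/\big(\sum_{|l|\leq n}\theta_l^2\big)^{1/2}$, which converges pointwise to $\theta$ with $\|\theta^{(n)}\|_{h^1}^2 \to \|\theta\|_{h^1}^2$, and approximate $u_0$ by smooth data $u_0^n\to u_0$ in $H^\beta$. Pathwise uniqueness of \eqref{STE-Ito} together with Lemma \ref{lem: convegence} (the set of processes satisfying \eqref{eq: positive-sobolev-2} is convex and strongly closed, hence weakly closed) transfers the bound to the weak limit, exactly as in the proof of Theorem \ref{thm: mixing-exp}. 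The main delicate step is the $0<\beta<1$ case: a crude subadditive bound $(a+b)^\beta\leq a^\beta+b^\beta$ would give exponent $\|\theta\|_{h^1}^2$ without the $\beta$ factor, so the concavity-plus-tangent-line refinement is essential for the stated sharp constant.
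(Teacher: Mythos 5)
Your proposal is correct. For \eqref{eq: positive-sobolev-1} you follow essentially the paper's own computation: the substitution $k\mapsto m+l$, the $l\mapsto -l$ symmetrization killing the cross term $2m\cdot l$, and the two applications of Lemma \ref{lem: matrix-property} (once with weights $\theta_l^2$, once with the still radially symmetric weights $|l|^2\theta_l^2$) are exactly what the paper does with $Z_k=|2\pi k|^2\E|\hat u_k|^2$. Where you genuinely diverge is the case $0<\beta<1$: the paper does not compute on the Fourier side at all, but instead invokes real interpolation, combining the $H^1$ identity with $\E\|u(t)\|_{L^2}^2\le\|u_0\|_{L^2}^2$ via $H^\beta=(H^1,L^2)_{\beta,2}$ and $L^2(\Omega;H^\beta)=(L^2(\Omega;H^1),L^2(\Omega;L^2))_{\beta,2}$ together with the interpolation theorem from Lunardi. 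Your route replaces this with an elementary spectral estimate: Jensen applied to the concave map $x\mapsto x^\beta$ gives $\tfrac12(|m+l|^{2\beta}+|m-l|^{2\beta})\le(|m|^2+|l|^2)^\beta$, and the tangent-line bound $(1+x)^\beta\le 1+\beta x$ splits this into the piece $|m|^{2\beta+2}$ that exactly cancels the It\^o--Stratonovich drift and the piece $\beta|m|^{2\beta}\,\|\theta\|_{h^1}^2$ that drives the Gr\"onwall inequality. Your version is self-contained and makes the sharp factor $\beta$ in the exponent transparent (as you note, plain subadditivity $(a+b)^\beta\le a^\beta+b^\beta$ would lose it), and it sidesteps the question of exact norm constants in the identification of $H^\beta$ with the $K$-method interpolation space, which the abstract route needs in order to produce the clean constant $1$ in front of the exponential; the paper's route is shorter once that machinery is granted. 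The approximation step for general $\theta$ and $u_0$ coincides with the paper's.
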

\begin{proof}
    Firstly, we prove \eqref{eq: positive-sobolev-1} and \eqref{eq: positive-sobolev-2} when $\supp \theta \subset \Z_0^d$ is compact and $u_0$ is smooth. Let $Z_k(t)= |2 \pi k|^2 \E |\hat{u}_k(t)|^2 $ and recall Proposition \ref{prop: Fourier coefficient}, we get
    \begin{equation} \label{eq: positive-norm}
        \sum_k Z_k(t) - \sum_k Z_k(0) =  8 \pi^2 C_d \kappa  \sum_{l,i} \sum_k \theta_{l}^2 (a_{l,i} \cdot k)^2 |k|^2 \int_0^t \left( \frac{Z_{k-l}(s)}{|k-l|^2}- \frac{Z_k(s)}{|k|^2} \right) \d s.
    \end{equation}
    For fixed $l \in \Z_0^d$, substituting $k$ for $\Tilde{k}+l$ in the above equation yields
    \begin{equation*}
        \sum_k Z_k(t) - \sum_k Z_k(0) =  8 \pi^2 C_d \kappa \sum_{l,i} \sum_{\Tilde{k}} \theta_{l}^2 (a_{l,i} \cdot \Tilde{k})^2 |\Tilde{k}+l|^2 \int_0^t \left( \frac{Z_{\Tilde{k}}(s)}{|\Tilde{k}|^2}- \frac{Z_{\Tilde{k}+l}(s)}{|\Tilde{k}+l|^2} \right) \d s.
    \end{equation*}
    Substituting $l$ for $\Tilde{l}=-l$ in \eqref{eq: positive-norm} and adding the above equation, we obtain
    \begin{align*}
         2 \Big( \sum_k Z_k(t) - \sum_k Z_k(0) \Big) & =  8 \pi^2 C_d \kappa \int_0^t \sum_{l,i} \sum_{k} \theta_{l}^2 (a_{l,i} \cdot (k+l))^2 \, \frac{|k|^2 -|k+l|^2}{|k+l|^2 } Z_{k+l}(s) \, \d s \\
         & \quad +  8 \pi^2 C_d \kappa \int_0^t \sum_{l,i} \sum_{k} \theta_{l}^2 (a_{l,i} \cdot k)^2 \,  \frac{|k+l|^2 -|k|^2}{|k|^2} Z_{k}(s) \, \d s
         .
    \end{align*}
    Substituting $k$ for $k-l$ in the first term of the right-hand side of the above identity, we get
    \begin{align*}
        \sum_k Z_k(t) - \sum_k Z_k(0) & = 4 \pi^2 C_d \kappa \int_0^t \sum_{l,i} \sum_{k} \theta_{l}^2 (a_{l,i} \cdot k)^2  \frac{|k+l|^2 + |k-l|^2 -2 |k|^2}{|k|^2 } Z_{k}(s) \, \d s \\
        & = 8 \pi^2 C_d \kappa \int_0^t \sum_{l,i} |l|^2 \theta_{l}^2 \sum_{k} \Big(a_{l,i} \cdot \frac{k}{|k|} \Big)^2  Z_{k}(s) \, \d s \\
        & = 8 \pi^2 \kappa \sum_{l} |l|^2 \theta_l^2 \int_0^t \sum_k Z_{k}(s) \, \d s,
    \end{align*}
    where the last step used Lemma \ref{lem: matrix-property}. Then, we obtain
    \begin{equation*}
        \E\| u(t) \|_{H^1}^2 = \sum_k Z_k(t) = e^{8 \pi^2 \kappa  \| \theta \|_{h^1}^2 t} \sum_k Z_k(0)= e^{8 \pi^2 \kappa  \| \theta \|_{h^1}^2 t} \| u_0 \|_{H^1}^2.
    \end{equation*}

    Now we prove inequality \eqref{eq: positive-sobolev-2} in the above setting. Let $(E_1, E_2)_{\theta, q}$ be the interpolation space of two Banach spaces $E_1$ and $E_2$ given by $K$-method (see \cite {Lunardi18}). Then the desired result can be derived by the interpolation theorem (see \cite[Chapter 1, Theorem 1.6]{Lunardi18}) and combining the following three facts: $\E\|u(t)\|_{L^2}^2\leq \|u_0\|_{L^2}^2$, $H^\beta(\T^d)=(H^1(\T^d), L^2(\T^d))_{\beta,2}$ and $L^2(\Omega; H^\beta(\T^d))=(L^2(\Omega; H^1(\T^d)), L^2(\Omega; L^2(\T^d)))_{\beta,2}$.

    Finally, we prove \eqref{eq: positive-sobolev-2} in the general case that $\sum_{k} |k|^2 \theta_k^2 < + \infty$ and $u_0 \in H^{\beta}(\T^d)$. In this case, we can find a sequence $\{u^n\}_{n}$ solving
    \begin{equation*}
        \d u^n = \kappa \Delta u^n \d t + \sqrt{C_d \kappa} \sum_{k,i} \theta_{k}^{(n)} e_{k} a_{k,i} \cdot \nabla u^n \d W_{t}^{k,i}, \quad u^n(0)=u_0^n,
    \end{equation*}
    where $\theta_{k}^{(n)}$ is defined as in Theorem \ref{thm: mixing-exp}, and the initial data $u_0^n$ converge to $u_0$ in $H^{\beta}(\T^d)$. Then $u^{n}$ satisfies the inequality \eqref{eq: positive-sobolev-2}, so there is a subsequence $\{u^{n_j}\}_j$ converging weakly to $\Tilde{u}$ in $L^2(\Omega; L^2([0,T]; H^{\beta}(\T^d)))$. Similar to the proof of \cite[Theorem 2]{Gal20}, we know $\Tilde{u}$ is just the unique solution $u$ to equation \eqref{STE-Ito}. Due to the collection of processes satisfying inequality \eqref{eq: positive-sobolev-2} in $L^2(\Omega; L^2([0,T]; H^{\beta}(\T^d)))$ is a closed convex set, so it is also weakly closed (see Lemma \ref{lem: convegence}), then the weak limit $u=\Tilde{u}$ also satisfies inequality \eqref{eq: positive-sobolev-2}.
\end{proof}

\begin{remark}
    If $\supp \theta \subset \Z_0^d$ is compact and the initial data $u_0$ is smooth, then for any $0<\beta<1$, the inequality $\|u\|_{L^2}^2 \leq \|u\|_{H^{\beta}} \|u\|_{H^{-\beta}} $ yields the lower bound of $\E \|u\|_{H^{-\beta}}^2$:
     \begin{equation*}
         \E \| u(t) \|_{H^{-\beta}}^2 \geq e^{- 8 \pi^2 \kappa \, \beta  \| \theta \|_{h^1}^2 t}  \, \frac{\|u_0\|_{L^2}^4}{\|u_0\|_{H^{\beta}}^2 }.
     \end{equation*}
\end{remark}

\section{Dissipation enhancement and Capi\'nski's conjecture} \label{sec: heat-equation}

In this section, we consider the stochastic heat equation \eqref{eq: stoch-heat} which is recalled here for reader's convenience:
\begin{equation}\label{eq: stoch-heat-1}
    \d u =\lambda u \, \d t+  \nu \Delta u \, \d t + \sqrt{C_d \kappa} \sum_{k,i} \theta_{k} e_{k} a_{k,i} \cdot \nabla u \circ \d W_{t}^{k,i},
\end{equation}
where $\lambda \geq 0$ and $\nu >0$. We aim to prove Theorem \ref{thm: heat}, which shows the dissipation enhancement and Capi\'nski's conjecture for the solution to \eqref{eq: stoch-heat-1}. Firstly, we prove the mixing properties of solutions to equation \eqref{eq: stoch-heat-1}, which is similar to Theorem \ref{thm: mixing-exp}.
\begin{lemma} \label{lem: heat-mixing}
    If initial data $u_0 \in L^2(\T^d)$ and $\theta \in \ell^2(\Z_0^d)$, then the solution $u$ to \eqref{eq: stoch-heat-1} satisfies
    \begin{equation*}
        \E \| u(t) \|_{H^{-1}}^2 \leq C \, e^{-(-2\lambda+8 \pi^2 \nu+D(\theta,d)  \, \kappa  )  \, t} \|u_0\|_{L^2}^{2},
    \end{equation*}
    where $D(\theta,d)$ is defined in \eqref{def: D} and $C$ is independent of noise coefficients $\kappa$ and $\theta$.
\end{lemma}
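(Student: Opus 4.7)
My plan is to reduce \eqref{eq: stoch-heat-1} to the pure stochastic transport equation by an exponential rescaling, and then compare the resulting Fourier second-moment ODE with the one already controlled in Theorem \ref{thm: mixing-exp}.

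First, I would introduce $v(t) := e^{-\lambda t} u(t)$. A direct Stratonovich product-rule computation shows that $v$ satisfies
\begin{equation*}
    \d v = \nu \Delta v \, \d t + \sqrt{C_d\kappa} \sum_{k,i} \theta_k \sigma_{k,i} \cdot \nabla v \circ \d W^{k,i},
\end{equation*}
which is \eqref{STE} supplemented by the deterministic dissipation $\nu\Delta v$. Since $u(t) = e^{\lambda t} v(t)$, it suffices to prove
\begin{equation*}
    \E\|v(t)\|_{H^{-1}}^2 \leq C\, e^{-(8\pi^2\nu + D(\theta,d)\kappa)\, t}\|u_0\|_{L^2}^2.
\end{equation*}

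Next, I would mirror Proposition \ref{prop: Fourier coefficient}: converting to It\^o form adds the extra $\kappa\Delta v$, so writing $\tilde Y_k(t) := \E|\hat v_k(t)|^2$ gives
\begin{equation*}
    \frac{\d}{\d t}\tilde Y_k = -8\pi^2(\nu+\kappa)|k|^2\tilde Y_k + 8\pi^2 C_d\kappa \sum_{l,i}\theta_l^2(a_{l,i}\cdot k)^2 \tilde Y_{k-l}.
\end{equation*}
Let $W_k(t) := \E|\hat v^{(0)}_k(t)|^2$, where $v^{(0)}$ solves \eqref{STE-Ito} with datum $u_0$; the two ODEs differ only by the extra sink $-8\pi^2\nu|k|^2\tilde Y_k$, which, since $|k|^2 \geq 1$ on $\Z_0^d$, is dominated by $-8\pi^2\nu \tilde Y_k$. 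Setting $Z_k := e^{8\pi^2\nu t}\tilde Y_k$ and $D_k := W_k - Z_k$, one finds $D_k(0)=0$ and
\begin{equation*}
    \frac{\d}{\d t} D_k = -8\pi^2\kappa|k|^2 D_k + 8\pi^2 C_d\kappa \sum_{l,i}\theta_l^2(a_{l,i}\cdot k)^2 D_{k-l} + 8\pi^2\nu(|k|^2-1)Z_k,
\end{equation*}
where the last forcing term is non-negative because $Z_k\geq 0$. A Duhamel representation against the positivity-preserving semigroup of the pure transport ODE then yields $D_k \geq 0$ for every $k$ and $t\geq 0$, that is, $\tilde Y_k(t) \leq e^{-8\pi^2\nu t}\, W_k(t)$.

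Finally, I would apply Theorem \ref{thm: mixing-exp} to $v^{(0)}$ at $\beta = 1$: case (ii) handles $d\in\{2,3\}$ directly, and case (i) with $\epsilon = 1/d^2$ handles $d\geq 4$; a short arithmetic check (already implicit in Lemma \ref{lem: uniform-decay}) shows that in every dimension the resulting decay rate matches $D(\theta,d)\kappa$, so
\begin{equation*}
    \sum_k \frac{W_k(t)}{4\pi^2|k|^2} = \E\|v^{(0)}(t)\|_{H^{-1}}^2 \leq C\, e^{-D(\theta,d)\kappa\, t}\|u_0\|_{L^2}^2.
\end{equation*}
Combining this with the comparison and then restoring $u = e^{\lambda t}v$ yields the claimed bound. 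The only technical subtlety will be justifying the comparison and the termwise sums under the weak hypotheses $\theta\in \ell^2(\Z_0^d)$ and $u_0\in L^2(\T^d)$; I would handle it in the usual way, first treating the regular case of compactly supported $\theta$ and smooth $u_0$ via Lemma \ref{lem: flow}, and then passing to the limit using the pathwise uniqueness of \eqref{STE-Ito} from \cite[Theorem 3]{Gal20} together with the closedness of the target inequality under weak limits (Lemma \ref{lem: convegence}).
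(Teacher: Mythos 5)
Your proposal is correct and reaches the stated rate, but it is organized quite differently from the paper's proof. The paper does not rescale out $\lambda$ or compare with the pure transport equation; instead it re-runs the computation of Lemma \ref{lem: sum-Fourier-ODE} for \eqref{eq: stoch-heat-1}, obtaining $\frac{\d}{\d t}\sum_k Y_k^p = 2\lambda p\sum_k Y_k^p - 8\pi^2\nu p\sum_k|k|^2Y_k^p - 4\pi^2C_d\kappa p\,(\text{Dirichlet form})$, bounds the viscous term by $-8\pi^2\nu p\sum_k Y_k^p$ via $|k|^2\ge 1$ (the same elementary fact you use), and then feeds the result into the orbit-decomposition Poincar\'e inequality of Theorem \ref{thm: Fourier-coefficient-exp-decay} and the H\"older argument of Theorem \ref{thm: mixing-exp} with $\beta=1$ (and $\epsilon=1/d^2$ for $d\ge4$), all at the level of the $\ell^p$ norm of $\{Y_k\}$. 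Your route — rescaling by $e^{-\lambda t}$, then a comparison principle $\tilde Y_k(t)\le e^{-8\pi^2\nu t}W_k(t)$ against the transport-only system, then citing Theorem \ref{thm: mixing-exp} as a black box — buys modularity (Theorem \ref{thm: mixing-exp} is reused verbatim rather than re-proved with extra terms) and your arithmetic for the matching of rates with $D(\theta,d)$ in each dimension is correct. The price is the one step the paper's route avoids: the positivity preservation of the infinite-dimensional ODE system $\dot D=AD+f$ with unbounded diagonal rates $-8\pi^2\kappa|k|^2$. A bare appeal to "the positivity-preserving semigroup" is not quite enough, since a priori the infinite system need not have a unique solution and your $D_k$ is the difference of two specific solutions; you should justify $D\ge 0$ concretely in the good case (compact $\supp\theta$, smooth $u_0$), e.g. by a weighted Gr\"onwall bound on the negative parts using the regularity supplied by Lemma \ref{lem: flow}, before passing to the limit as you describe. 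With that step made precise, your argument is a valid alternative proof.
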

\begin{proof}
    Firstly, we assume that $\supp \theta \subset \Z_0^d$ is compact and the initial data $u_0$ is smooth. Let $u$ be the solution to equation \eqref{eq: stoch-heat-1}, then its Fourier coefficient $\hat{u}_k$ satisfies
    \begin{equation*}
    \frac{\d}{\d t} \E |\hat{u}_{k}|^2 = 2 \lambda \E |\hat{u}_{k}|^2  - 8 \pi^2 (\nu+\kappa) |k|^2 \E |\hat{u}_{k}|^2  + 8 \pi^2 C_d \kappa \sum_{l} \theta_{l}^2 |\Pi_l^\perp k|^2 \E |\hat{u}_{k-l}|^2 .
    \end{equation*}
    Let $Y_k(t)= \E |\hat{u}_{k}(t)|^2$. For any $p>1$, proceeding as in Lemma \ref{lem: sum-Fourier-ODE}, we get
    \begin{align*}
        \frac{\d}{\d t} \sum_{k} Y_{k}^p &= 2 \lambda \, p \sum_{k} Y_{k}^p - 8 \pi^2 \nu p \sum_{k} |k|^2  Y_{k}^p \\
        & \quad -4 \pi^2 C_{d} \kappa \, p  \sum_{l} \! \sum_{k} \theta_{l}^2 |\Pi_l^\perp k|^2 \big(Y_{k+l}-Y_{k} \big) \! \big(Y_{k+l}^{p-1}-Y_{k}^{p-1} \big) \\
        & \leq  (2\lambda- 8 \pi^2 \nu) p \sum_{k} Y_{k}^p  \!-4 \pi^2 C_{d} \kappa \, p  \sum_{l} \! \sum_{k} \theta_{l}^2 |\Pi_l^\perp k|^2 \big(Y_{k+l}-Y_{k} \big) \! \big(Y_{k+l}^{p-1}-Y_{k}^{p-1} \big).
    \end{align*}
    Similar as Theorem \ref{thm: Fourier-coefficient-exp-decay}, we obtain
    \begin{equation*}
       \|Y(t)\|_{\ell^p} \leq e^{- (-2\lambda+8 \pi^2 \nu+ C(\theta,d) \frac{p-1}{p^2} \kappa ) \, t} \|Y(0)\|_{\ell^p}.
    \end{equation*}
    For $d=2,3$, proceeding as in Theorem \ref{thm: mixing-exp} (ii), we have
    \begin{equation*}
        \E \| u(t) \|_{H^{-1}}^2 \leq C \, e^{-(-2\lambda+8 \pi^2 \nu+ \frac{1}{4} C(\theta,d)\kappa )  \, t} \|u_0\|_{L^2}^{2}.
    \end{equation*}
    For $d\geq 4$, proceeding as in Theorem \ref{thm: mixing-exp} (i) and choosing $\epsilon=\frac{1}{d^2}<\frac{d-2}{d}$, we get
    \begin{equation*}
        \E \| u(t) \|_{H^{-1}}^2 \leq C \, e^{-(-2\lambda+8 \pi^2 \nu+ 2 \frac{d-3}{d^2} C(\theta,d) \kappa) \, t} \|u_0\|_{L^2}^{2}.
    \end{equation*}
    Recalling the definition of $D(\theta,d)$ and combining the above estimates, we get the desired result when $u_0$ and $\theta$ are good enough. For general $u_0 \in L^2(\T^d)$ and $\theta \in \ell^2(\Z_0^d)$, following the approximation of Theorem \ref{thm: mixing-exp}, we get Lemma \ref{lem: heat-mixing} also holds for general case.
\end{proof}

Now, we will use the above mixing property to prove Theorem \ref{thm: heat}.

\begin{proof}[Proof of Theorem \ref{thm: heat}]
    By energy identity \eqref{eq: energy-identity-heat}, we have
    \begin{align*}
        - \frac{\d}{\d t} \frac{e^{2 \lambda t}}{ \E \|u(t)\|_{L^2}^2} &= - \frac{2 \lambda \, e^{2 \lambda t}}{ \E \|u(t)\|_{L^2}^2}+ \frac{e^{2 \lambda t}}{ \big(\E \|u(t)\|_{L^2}^2 \big)^2} \, \big( 2 \lambda \E \|u(t)\|_{L^2}^2 -2 \nu \E \|\nabla u(t)\|_{L^2}^2 \big) \\
        & =  - \frac{2 \nu e^{2 \lambda t}}{ \big(\E \|u(t)\|_{L^2}^2 \big)^2} \, \E \|u(t)\|_{H^{1}}^2 .
    \end{align*}
     The inequality $\E \|u(t)\|_{L^2}^2 \leq (\E \|u(t)\|_{H^{1}}^2)^{1/2}(\E \|u(t)\|_{H^{-1}}^2)^{1/2}$ and Lemma \ref{lem: heat-mixing} yield
    \begin{equation*}
        - \frac{\d}{\d t} \frac{e^{2 \lambda t}}{ \E \|u(t)\|_{L^2}^2} \leq  - \frac{2 \nu e^{2 \lambda t}}{ \E \|u(t)\|_{H^{-1}}^2} \leq - \frac{2 \nu}{C \|u_0\|_{L^2}^{2} } \, e^{(8 \pi^2 \nu+D(\theta,d)  \, \kappa  )  \, t}.
    \end{equation*}
    Then we have
    \begin{equation*}
         \frac{1}{ \|u_0\|_{L^2}^2} -  \frac{e^{2 \lambda t}}{ \E \|u(t)\|_{L^2}^2} \leq - \frac{2 \nu}{C (8 \pi^2 \nu+ D(\theta,d)  \, \kappa) } \big(e^{(8 \pi^2 \nu+D(\theta,d)  \, \kappa )  \, t}-1 \big) \frac{1}{\|u_0\|_{L^2}^2}.
    \end{equation*}
    Choosing $C\geq \frac{1}{4 \pi^2}$, we know $ \frac{2 \nu}{C (8 \pi^2 \nu+ D(\theta,d)  \, \kappa) } \leq 1$, then
    \begin{equation*}
        \E \|u(t)\|_{L^2}^2 \leq C \, \frac{8 \pi^2 \nu+ D(\theta,d)  \, \kappa}{2 \nu } \, e^{-(-2 \lambda + 8 \pi^2 \nu+D(\theta,d)  \, \kappa )  \, t} \|u_0\|_{L^2}^2.
    \end{equation*}
    So, we get the dissipation enhancement (or Capi\'nski's conjecture) in the average sense.

    For any $0<s<t$, we have the solution $u$ of \eqref{eq: stoch-heat-1} satisfies a.s.
    $$\|u(t)\|_{L^2}^2 \leq \exp \{ (2 \lambda-8 \pi^2 \nu) (t-s) \} \|u(s)\|_{L^2}^2.$$
    Combining the above energy estimates, for any $n \in \N$, we get
    \begin{align*}
       \E \Big[ \sup_{t\in [n, n+1]} \|u(t) \|_{L^2}^2 \Big] \leq (e^{ (2 \lambda-8 \pi^2 \nu) } \vee 1) \E \|u(n)\|_{L^2}^2 \leq \Tilde{C}  e^{-(-2 \lambda + 8 \pi^2 \nu+D(\theta,d)  \, \kappa )  \, n} \|u_0\|_{L^2}^2,
    \end{align*}
    where $\Tilde{C}:= C \, \frac{8 \pi^2 \nu+ D(\theta,d)  \, \kappa}{2 \nu } (e^{ (2 \lambda-8 \pi^2 \nu)} \vee 1)$. Define the events
    $$A_n= \bigg\{\omega\in \Omega: \sup_{t\in [n, n+1]} \|u(t) \|_{L^2}^2 > \Tilde{C} e^{-\gamma n  } \, \|u_0\|_{L^2}^2 \bigg\}, \quad n \geq 0.$$
Then, by Chebyshev's inequality and the fact $\gamma< -2 \lambda + 8 \pi^2 \nu+D(\theta,d)  \, \kappa$, we obtain
\begin{align*}
    \sum_{n=0}^{+\infty} \P(A_n) & \leq \sum_{n=0}^{+\infty} \frac{e^{\gamma n }}{\Tilde{C} \|u_0\|_{L^2}^2 } \E \Big(\sup_{t\in [n, n+1]} \|u(t) \|_{L^2}^2 \Big) \\
    &\leq \sum_{n=0}^{+\infty} e^{-(-\gamma-2 \lambda + 8 \pi^2 \nu+D(\theta,d)  \, \kappa)\, n } <+\infty.
\end{align*}
By Borel-Cantelli lemma, for $\P$-a.s. $\omega\in \Omega$, there exists a big $N(\omega)\geq 0 $ such that
  \begin{equation*}
      \sup_{t\in [n, n+1]} \|u(t) \|_{L^2}^2 \leq \Tilde{C} \, e^{-\lambda n t_0 } \|u_0\|_{L^2}^2 \quad \mbox{for all } n \geq N(\omega).
  \end{equation*}
For $0 \leq n<N(\omega)$, we have
  \begin{equation*}
      \sup_{t\in [n, n+1]} \|u(t) \|_{L^2}^2 \leq  (e^{ (2 \lambda-8 \pi^2 \nu) N(\omega) } \vee 1) \, e^{\gamma  (N(\omega) +1) } \, e^{-\gamma (n+1) } \|u_0\|_{L^2}^2.
  \end{equation*}
Let $C(\omega) := \max \{\Tilde{C},e^{\gamma  (N(\omega)+1) }, e^{(2 \lambda-8\pi^2 \nu+\gamma)  (N(\omega)+1) } \}  $. For any $t\geq 0$, there exists a $n \in \mathbb N$ such that $t \in [n,n+1)$, thus
\begin{equation*}
    \|u(t) \|_{L^2}^2  \leq \sup_{t \in [n,n+1]} \|u(t) \|_{L^2}^2 \leq C(\omega) \, e^{-\gamma t }   \|u_0\|_{L^2}^2.
\end{equation*}

It remains to show that $C(\omega)$ has finite moments, for which it is enough to show this for $e^{\gamma N(\omega)}$ and $e^{(2 \lambda-8\pi^2 \nu+\gamma)  N(\omega) }$. To this end, we need to estimate the tail probability $\P(\{N(\omega) \geq k\})$.
Note that $N(\omega)$ may be defined as the largest integer $n$ such that
\begin{equation*}
    \sup_{t\in [n, n+1]} \|u(t) \|_{H^{-1}}^2 > \Tilde{C} e^{-\lambda n }  \|u_0\|_{L^2}^2\, ;
\end{equation*}
hence $\{\omega\in \Omega: N(\omega) \geq k\} = \bigcup_{n=k}^\infty A_n$. Then for any $k \in \N$, we have
\begin{align*}
    \P(\{N(\omega) \geq k\}) & \leq \sum_{n=k}^\infty \P(A_n) \leq \sum_{n=k}^\infty  e^{-(-\gamma-2 \lambda + 8 \pi^2 \nu+D(\theta,d)  \, \kappa) \, n } \\
    & \leq \frac{e^{-(-\gamma-2 \lambda + 8 \pi^2 \nu+D(\theta,d)  \, \kappa) \, k }}{ 8 \pi^2 \nu+D(\theta,d)  \, \kappa -\gamma-2 \lambda } .
\end{align*}
As a result, for any $(q+1) \gamma < 8 \pi^2 \nu+D(\theta,d)  \, \kappa -2 \lambda$,
\begin{align*}
    \E \, e^{q \gamma  N(\omega) } & = \sum_{k=0}^{+\infty} e^{q \gamma k } \, \P(\{N(\omega) = k\}) \\
    & \leq \sum_{k=0}^{+\infty} e^{q \gamma k } \frac{e^{-(-\gamma-2 \lambda + 8 \pi^2 \nu+D(\theta,d)  \, \kappa) \, k }}{ 8 \pi^2 \nu+D(\theta,d)  \, \kappa -\gamma-2 \lambda } <+ \infty;
\end{align*}
and for any $(q+1) (2 \lambda-8\pi^2 \nu+\gamma) < D(\theta,d)  \, \kappa$,
\begin{align*}
    \E \,  e^{ q (2 \lambda-8\pi^2 \nu+\gamma) N(\omega) }  & = \sum_{k=0}^{+\infty} e^{q (2 \lambda-8\pi^2 \nu+\gamma) k } \, \P(\{N(\omega) = k\}) \\
    & \leq \sum_{k=0}^{+\infty} e^{ q(2 \lambda-8\pi^2 \nu+\gamma) k } \frac{e^{-(-\gamma-2 \lambda + 8 \pi^2 \nu+D(\theta,d)  \, \kappa) \, k }}{ 8 \pi^2 \nu+D(\theta,d)  \, \kappa -\gamma-2 \lambda } <+ \infty.
\end{align*}
So $C(\omega)$ has finite $q$-th moment for any $0<q<\min\{\frac{-2 \lambda + 8 \pi^2 \nu+D(\theta,d)  \, \kappa}{\gamma}, \frac{D(\theta,d)  \, \kappa}{\gamma+2 \lambda - 8 \pi^2 \nu}\}-1$.
\end{proof}

\section{Mixing for regularized stochastic 2D Euler equations} \label{sec: 2D Euler}

We consider the regularized stochastic 2D Euler equation \eqref{eq: 2D-Euler-approxiamate} with $\alpha>0$ on $\T^2$:
\begin{equation} \label{eq: 2D-Euler-approxiamate-1}
    \begin{cases}
        \d w^{(\alpha)} + u^{(\alpha)} \cdot \nabla w^{(\alpha)} \, \d t =  \sqrt{2 \kappa} \sum_{k} \theta_{k} \sigma_{k} \cdot \nabla w^{(\alpha)} \circ \d W_{t}^{k} , \\
        u^{(\alpha)} = \curl^{-1} (- \Delta/(4 \pi^2) )^{-\alpha/2} w^{(\alpha)},
    \end{cases}
\end{equation}
with initial data $w^{(\alpha)}_0 \in L^2(\T^2)$, where $\curl^{-1}$ is the well-known Biot-Savart operator. Recall that in the 2D case, for any $k\in \Z^2_0$, we write the only divergence free vector field simply as $\sigma_{k}= a_k e_k$. The first equation in \eqref{eq: 2D-Euler-approxiamate-1} is understood in the It\^o form:
  $$\d w^{(\alpha)} + u^{(\alpha)} \cdot \nabla w^{(\alpha)} \, \d t = \kappa\Delta w^{(\alpha)}\,\d t + \sqrt{2 \kappa} \sum_{k} \theta_{k} \sigma_{k} \cdot \nabla w^{(\alpha)} \, \d W_{t}^{k}. $$
Inspired by \cite{BFM11}, we will use Girsanov's theorem to deduce the mixing properties of solutions to \eqref{eq: 2D-Euler-approxiamate-1} from similar properties of equation \eqref{STE-Ito}.

Firstly, we give the meaning of weak solutions to \eqref{eq: 2D-Euler-approxiamate-1}. By a filtered probability space $(\Omega,\mathcal{F}_t,\P)$, we mean a probability space $(\Omega,\mathcal{F}_{\infty},\P)$ and a right-continuous filtration $\{\mathcal{F}_t\}_{t \geq 0}$ such that $\mathcal{F}_{\infty}$ is the $\sigma$-algebra generated by $\cup_{t \geq 0} \mathcal{F}_{t}$.

\begin{definition} \label{def: solution-2D-Euler-approxiamte}
    Given $w^{(\alpha)}_0 \in L^2(\T^2)$, a weak solution of equation \eqref{eq: 2D-Euler-approxiamate-1} consists of a filtered probability space $(\Omega,\mathcal{F}_t,\P)$, a sequence of standard complex Brownian motions $\{W_t^{k}\}_{k \in \Z_0^2}$ on $(\Omega,\mathcal{F}_t,\P)$ and an $L^2(\T^2)$-valued, $\mathcal{F}_t$-progressively measurable process $w^{(\alpha)}$ with weakly continuous paths, such that for every $\phi \in C^{\infty}(\T^2)$, $\P$-a.s. for all $t \geq 0$,
    \begin{align*}
        \< w^{(\alpha)}(t), \phi \> - \< w^{(\alpha)}_0, \phi \> & = \kappa \int_{0}^{t} \< w^{(\alpha)}(s), \, \Delta \phi \> \, \d s + \int_{0}^{t} \< w^{(\alpha)}(s) , u^{(\alpha)}(s) \cdot \nabla \phi \> \, \d s\\
        &\quad - \sqrt{2\kappa} \sum_{k} \theta_k \int_{0}^{t} \<  w^{(\alpha)}(s) , \sigma_{-k} \cdot \nabla \phi \> \, \d W_{s}^{k},
    \end{align*}
    where $u^{(\alpha)} = \curl^{-1} (- \Delta/(4 \pi^2) )^{-\alpha/2} w^{(\alpha)}$. Denote this solution by $\big(\Omega,\mathcal{F}_t,\P,w^{(\alpha)},\{W_t^{k}\}_{k \in \Z_0^2}\big)$, or simply by $w^{(\alpha)}$. We call $w^{(\alpha)}$ an energy controlled weak solution if for any fixed $T>0$,
    \begin{equation*}
       \P \mbox{-a.s.},\quad  \|w^{(\alpha)}(t)\|_{L^2} \leq \|w_0^{(\alpha)}\|_{L^2}, \quad \forall \, t \in [0,T].
    \end{equation*}
\end{definition}

Below we will always consider the special noise coefficient $\{\theta^{(\alpha)}_{k}\}_k$ defined in \eqref{def: special-noise}:
\begin{equation*}
    \theta^{(\alpha)}_{k} = \frac{1}{K_{\alpha}} \frac{1}{|k|^{1+\alpha}}\quad ( k \in \Z_0^2), \quad K_{\alpha}=\Big( \sum_{k} \frac{1}{|k|^{2+2\alpha}} \Big)^{1/2}.
\end{equation*}

Before proving the well-posedness and mixing properties of weak solutions to \eqref{eq: 2D-Euler-approxiamate-1} with the above noise coefficient $\theta=\{\theta^{(\alpha)}_{k} \}_k $, we first show the connection between the energy controlled weak solutions of \eqref{eq: 2D-Euler-approxiamate-1} and the solutions of \eqref{STE-Ito} by Girsanov's transform. Following the idea of \cite{BBF,BFM11,BFM10}, we will find a new probability measure $\Tilde{\P}$ on $(\Omega, \mathcal{F}_{\infty})$ such that
\begin{equation} \label{def: new Brownian motion}
    \Tilde{W}_{t}^{k} = W_{t}^{k} - \int_{0}^{t} \frac{ \<a_k \cdot u^{(\alpha)}(s) , e_k\>}{ \sqrt{2 \kappa} \, \theta_{k}^{(\alpha)}}   \, \d s \quad ( k \in \Z_0^2)
\end{equation}
are complex-valued Brownian motions under $\Tilde{\P}$. Hence, the nonlinear term of \eqref{eq: 2D-Euler-approxiamate-1} is absorbed into the new Brownian motions and equation \eqref{eq: 2D-Euler-approxiamate-1} becomes linear equation \eqref{STE-Ito} under $\Tilde{\P}$.

Assume that $\big(\Omega,\mathcal{F}_t,\P,w^{(\alpha)},\{W_t^{k}\}_{k \in \Z_0^2}\big)$ is an energy controlled solution of \eqref{eq: 2D-Euler-approxiamate-1}. Let $\{\hat{w}_k^{(\alpha)}\}_{k \in \Z_0^2}$ be the Fourier coefficients of vorticity $w^{(\alpha)}$, then the velocity is given by
\begin{equation*}
    u^{(\alpha)} = \curl^{-1} (- \Delta/(4 \pi^2) )^{-\alpha/2} w^{(\alpha)}= \frac{{\rm i}}{2 \pi} \sum_{k} \frac{k^{\perp}}{|k|^{2+\alpha}} \hat{w}_k^{(\alpha)} e_k.
\end{equation*}
Due to $w^{(\alpha)}$ is an energy controlled weak solution, the process
\begin{equation*}
    M_t := \sum_{k \in \Z_0^2} \int_{0}^{t} \frac{ \<a_k \cdot u^{(\alpha)}(s) , e_k\>}{ \sqrt{2 \kappa} \, \theta_{k}^{(\alpha)}}   \, \d W_s^{-k} = \frac{{\rm i}}{2 \pi } K_{\alpha} \sum_{k \in \Z_0^2} \frac{a_k \cdot k^{\perp}}{ \sqrt{2 \kappa} \, |k|} \int_{0}^{t} \hat{w}_k^{(\alpha)}(s) \, \d  W_s^{-k}
\end{equation*}
is a martingale with quadratic variation
\begin{align*}
    [M,M]_t & = \frac{K_{\alpha}^2}{4 \pi^2 \, \kappa } \, \sum_{k \in \Z_0^2}  \frac{(a_k \cdot k^{\perp})^2}{|k|^2}  \, \int_{0}^{t} |\hat{w}_k^{(\alpha)}(s)|^2 \, \d s \\
    & = \frac{K_{\alpha}^2}{4 \pi^2 \, \kappa } \int_{0}^{t} \|w^{(\alpha)}(s)\|_{L^2}^2 \, \d s  \leq \frac{K_{\alpha}^2}{4 \pi^2 \, \kappa } \|w^{(\alpha)}_0 \|_{L^2}^2 \, t.
\end{align*}
By Novikov's criterion, we see that $\mathcal{N}(M)_t:= \exp\big( M_t-\frac{1}{2} [M,M]_t \big)$ is an exponential martingale. We define the set function $\Tilde{\P}$ on $\cup_{t \geq 0} \mathcal{F}_{t}$ by setting
\begin{equation} \label{eq: expositional martingale}
    \frac{\d \Tilde{\P}}{\d \P} \Big|_{\mathcal{F}_t} =\mathcal{N}(M)_t= \exp \bigg( \sum_{k \in \Z_0^2} \int_{0}^{t} \frac{ \<a_k \cdot u^{(\alpha)}(s) , e_k\>}{ \sqrt{2 \kappa} \, \theta_{k}^{(\alpha)}}   \, \d W_s^{-k} - \frac{K_{\alpha}^2}{8 \pi^2 \, \kappa } \int_{0}^{t} \|w^{(\alpha)}(s)\|_{L^2}^2 \, \d s \bigg)
\end{equation}
for every $t \geq 0$, and extend this definition to the terminal $\sigma$-field $\mathcal{F}_{\infty}$. We cannot prove that $\Tilde{\P}$ is absolutely continuous with respect to $\P$ on $\mathcal{F}_{\infty}$, but the strict positivity of $\mathcal{N}(M)_t$ shows $\Tilde{\P}$ and $\P$ are equivalent on each $\mathcal{F}_{t}$. Applying Girsanov's theorem for both real and imaginary parts of $\{ \Tilde{W}_{t}^{k} \}_k$, we deduce that $\{ \Tilde{W}_{t}^{k} \}_k$ are standard complex Brownian motions defined on $(\Omega,\mathcal{F}_t,\Tilde{\P})$, satisfying \eqref{noise.1}.

Since $w^{(\alpha)}$ is a weak solution of \eqref{eq: 2D-Euler-approxiamate-1}, for every $\phi \in C^{\infty}(\T^2)$, $\P$-a.s. for all $t \geq 0$,
\begin{align*}
    \< w^{(\alpha)}(t), \phi \> - \< w^{(\alpha)}_0, \phi \> & = \kappa \int_{0}^{t} \< w^{(\alpha)}(s), \, \Delta \phi \> \, \d s + \int_{0}^{t} \< w^{(\alpha)}(s) , u^{(\alpha)}(s) \cdot \nabla \phi \> \, \d s\\
    &\quad - \sqrt{2\kappa} \sum_{k} \theta_k \int_{0}^{t} \<  w^{(\alpha)}(s) , \sigma_{-k} \cdot \nabla \phi \> \, \d W_{s}^{k}.
\end{align*}
Recalling the definition \eqref{def: new Brownian motion} and using the fact $u^{(\alpha)}= \sum_{k} \<a_k \cdot u^{(\alpha)}(s) , e_k\> \sigma_{k}$, we get
\begin{equation*}
    \< w^{(\alpha)}(t), \phi \> - \< w^{(\alpha)}_0, \phi \> = \kappa \int_{0}^{t} \< w^{(\alpha)}(s), \, \Delta \phi \> \, \d s - \sqrt{2\kappa} \sum_{k} \theta_k \int_{0}^{t} \<  w^{(\alpha)}(s) , \sigma_{-k} \cdot \nabla \phi \> \, \d \Tilde{W}_{s}^{k}.
\end{equation*}
Due to $\Tilde{\P}$ and $\P$ are equivalent on each $\mathcal{F}_{t}$, for any fixed $T>0$, we have
\begin{equation*}
   \Tilde{\P} \mbox{-a.s.}, \quad \|w^{(\alpha)}(t)\|_{L^2} \leq \|w_0^{(\alpha)}\|_{L^2}, \quad \forall \,  t \in [0,T].
\end{equation*}
So we transform the energy controlled weak solution $w^{(\alpha)}$ of \eqref{eq: 2D-Euler-approxiamate-1} to a solution of \eqref{STE-Ito} on the probability space $(\Omega, \mathcal{F}_t, \Tilde{\P})$. Notice that the above transformation process is reversible; similarly, we can transform a solution of \eqref{STE-Ito} to an energy controlled weak solution of \eqref{eq: 2D-Euler-approxiamate-1}.

Following the idea of \cite{BBF,BFM10} and the above discussions, we first prove the existence and uniqueness in the law of weak solutions to \eqref{eq: 2D-Euler-approxiamate-1} with the noise coefficient $\theta=\{\theta^{(\alpha)}_{k} \}_k $.

\begin{lemma} \label{lem: well-posedness 2D Euler-alpha}
    For any initial data $w_0^{(\alpha)} \in L^2(\T^2)$, the equation \eqref{eq: 2D-Euler-approxiamate-1} with the noise coefficient $\theta=\{\theta^{(\alpha)}_{k}  \}_k $ has an energy controlled weak solution and this solution is unique in law.
\end{lemma}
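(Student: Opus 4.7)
The plan is to exploit the reversible Girsanov correspondence outlined in the paragraphs preceding the lemma: energy controlled weak solutions of \eqref{eq: 2D-Euler-approxiamate-1} under $\P$ are in bijection (at the level of laws) with solutions of the linear stochastic transport equation \eqref{STE-Ito} under the auxiliary measure $\Tilde{\P}$ defined by \eqref{eq: expositional martingale}. Since \eqref{STE-Ito} is well-posed by \cite[Theorems 2 and 3]{Gal20}, both existence and uniqueness in law for \eqref{eq: 2D-Euler-approxiamate-1} will follow by running this transformation in each direction.

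For existence, I would start on an auxiliary filtered probability space $(\Omega,\mathcal{F}_t,\Tilde{\P})$ carrying complex Brownian motions $\{\Tilde{W}^k\}_k$ satisfying \eqref{noise.1}, and take $u$ to be the pathwise unique solution of \eqref{STE-Ito} with datum $w_0^{(\alpha)}$. Setting $v := {\curl}^{-1}(-\Delta/(4\pi^2))^{-\alpha/2} u$ and mimicking the computation just above the lemma, the martingale $M_t = \sum_k \int_0^t \frac{\<a_k\cdot v(s),e_k\>}{\sqrt{2\kappa}\,\theta_k^{(\alpha)}}\,\d \Tilde{W}_s^{-k}$ has $[M,M]_t \leq \frac{K_\alpha^2}{4\pi^2\kappa}\|w_0^{(\alpha)}\|_{L^2}^2 \, t$ by the $L^2$ contraction along \eqref{STE-Ito}, so Novikov's criterion makes $\mathcal{N}(M)_t$ a genuine martingale on every $[0,T]$. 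Defining $\P$ on $\mathcal{F}_T$ by $\d\P/\d\Tilde{\P}|_{\mathcal{F}_T}=\mathcal{N}(M)_T$, Girsanov's theorem asserts that the shifted processes $W_t^k := \Tilde{W}_t^k - \int_0^t \frac{\<a_k\cdot v(s),e_k\>}{\sqrt{2\kappa}\,\theta_k^{(\alpha)}}\,\d s$ are standard complex Brownian motions under $\P$ satisfying \eqref{noise.1}; substituting these back into the weak formulation of \eqref{STE-Ito} recovers exactly Definition \ref{def: solution-2D-Euler-approxiamte} with $u^{(\alpha)}=v$, and the $\Tilde{\P}$-a.s. energy bound for $u$ transfers to a $\P$-a.s. bound since the two measures are equivalent on each $\mathcal{F}_T$.

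For uniqueness in law, suppose $w_1^{(\alpha)}$ and $w_2^{(\alpha)}$ are two energy controlled weak solutions on spaces $(\Omega_i,\mathcal{F}_t^i,\P_i)$. I would apply the forward Girsanov procedure written out in the excerpt: define $\Tilde{\P}_i$ on $\mathcal{F}_T^i$ via \eqref{eq: expositional martingale}, so that Girsanov promotes the processes defined by \eqref{def: new Brownian motion} to Brownian motions $\{\Tilde{W}^{i,k}\}_k$ under $\Tilde{\P}_i$, and $w_i^{(\alpha)}$ solves \eqref{STE-Ito} on $(\Omega_i,\mathcal{F}_t^i,\Tilde{\P}_i)$. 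The joint law uniqueness of \eqref{STE-Ito} (a Yamada--Watanabe consequence of pathwise uniqueness in \cite[Theorem 3]{Gal20}) yields that the pairs $(w_i^{(\alpha)},\{\Tilde{W}^{i,k}\}_k)$ have identical joint laws under $\Tilde{\P}_i$. Since the inverse Radon--Nikodym density $\d\P_i/\d\Tilde{\P}_i|_{\mathcal{F}_T^i}$, namely the reciprocal of \eqref{eq: expositional martingale} rewritten as a stochastic integral against the $\{\Tilde{W}^{i,k}\}$ together with a pathwise Lebesgue integral in $w_i^{(\alpha)}$, is a fixed measurable functional of that pair, pushing this functional through gives that the law of $w_i^{(\alpha)}$ under $\P_i$ is a fixed function of its law under $\Tilde{\P}_i$, hence the two solutions have the same law.

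The main obstacle I foresee is packaging the Girsanov density as an explicit measurable functional of $(w^{(\alpha)},\{\Tilde{W}^k\})$ so that uniqueness in law transports cleanly through the change of measure; this requires rewriting the exponent in \eqref{eq: expositional martingale}, originally phrased as an It\^o integral against the $W^k$ under $\P$, as an equivalent It\^o integral against the $\Tilde{W}^k$ under $\Tilde{\P}$ plus a Lebesgue integral in $w^{(\alpha)}$. Extending the consistent family $\{\P|_{\mathcal{F}_T}\}_{T>0}$ to a single measure on $\mathcal{F}_\infty$ is secondary and follows from standard projective limit arguments; in any case, for weak solutions in the sense of Definition \ref{def: solution-2D-Euler-approxiamte} it suffices to work on each finite horizon $[0,T]$ and then let $T\to\infty$.
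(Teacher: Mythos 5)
Your proposal is correct and follows essentially the same route as the paper: existence by running Girsanov backwards from the pathwise-unique solution of \eqref{STE-Ito}, and uniqueness in law by the forward transform, joint law uniqueness of $(w^{(\alpha)},\{\Tilde W^k\}_k)$ from pathwise uniqueness, and transporting the inverse density back. The "obstacle" you flag is resolved exactly as you suggest — the paper rewrites the inverse density as $\exp\big(\Tilde M_t-\tfrac12[\Tilde M,\Tilde M]_t\big)$ with $\Tilde M$ an It\^o integral of $\hat w^{(\alpha)}_k$ against the new Brownian motions, making it a measurable functional of the pair whose law is unique.
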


\begin{proof}
    As discussed above, we can similarly transform a solution of \eqref{STE-Ito} to an energy controlled weak solution of \eqref{eq: 2D-Euler-approxiamate-1}, so we get the existence of energy controlled weak solutions of \eqref{eq: 2D-Euler-approxiamate-1}. Next we turn to prove its uniqueness in law.

    Assume that $\big(\Omega^{(i)},\mathcal{F}_t^{(i)},\P^{(i)},w^{(\alpha,i)},\{ W_t^{(k,i)}\}_{k \in \Z_0^2}\big)$, $i=1,2$, are two energy controlled weak solutions of \eqref{eq: 2D-Euler-approxiamate-1} with the same initial data $w_0^{(\alpha)} \in L^2(\T^2)$. Define set functions $\Tilde{\P}^{(i)}$ by \eqref{eq: expositional martingale} with respect to $\big(\P^{(i)},w^{(\alpha,i)},\{ W_t^{(k,i)}\}_{k} \big)$ and the complex-valued Brownian motions $\{ \Tilde{W}_t^{(k,i)}\}_{k}$ by \eqref{def: new Brownian motion} on $\big(\Omega^{(i)}, \mathcal{F}_t^{(i)}, \Tilde{\P}^{(i)} \big)$ for each $i=1,2$. We define the martingale $\Tilde{M}_t^{(i)}$ under $\Tilde{\P}^{(i)}$ by
    \begin{equation*}
        \Tilde{M}_t^{(i)} := - \frac{{\rm i}}{2 \pi} K_{\alpha} \sum_{k \in \Z_0^2} \frac{a_k \cdot k^{\perp}}{ \sqrt{2 \kappa} \, |k|} \int_{0}^{t} \hat{w}_k^{(\alpha,i)}(s) \, \d  \Tilde{W}_s^{(-k,i)},
    \end{equation*}
    then $(\d \P^{(i)} / \d \Tilde{\P}^{(i)}) \big|_{\mathcal{F}_t} =\exp \big(\Tilde{M}_t^{(i)} -\frac{1}{2} \big[ \Tilde{M}^{(i)},\Tilde{M}^{(i)} \big]_t \big)$. As mentioned earlier in this subsection, $w^{(\alpha,i)}$ is a solution of \eqref{STE-Ito} under $\Tilde{\P}^{(i)}$. Applying the pathwise uniqueness of \eqref{STE-Ito}, as proved in \cite[Theorem 3]{Gal20}, we conclude that $(\Tilde{M}^{(1)}, w^{(\alpha,1)})$ and $(\Tilde{M}^{(2)}, w^{(\alpha,2)})$ have the same law.

    For any fixed $T>0$, given $n \geq 1$, $t_1,t_2,\cdots,t_n \in [0,T]$ and a bounded measurable function $F: \big(L^2(\T^2)\big)^n \rightarrow \R$, we obtain that for each $i=1,2$,
    \begin{align*}
        & \E^{\P^{(i)}} \Big[ F \big(w^{(\alpha,i)}(t_1),w^{(\alpha,i)}(t_2),\cdots, w^{(\alpha,i)}(t_n) \big) \Big] \\
        & \quad = \E^{\Tilde{\P}^{(i)}} \Big[ \exp \Big(\Tilde{M}_T^{(i)} -\frac{1}{2} \big[\Tilde{M}^{(i)}, \Tilde{M}^{(i)} \big]_T \Big) \, F \big(w^{(\alpha,i)}(t_1),w^{(\alpha,i)}(t_2),\cdots, w^{(\alpha,i)}(t_n) \big) \Big],
    \end{align*}
    where $\E^{\P^{(i)}}$ and $\E^{\Tilde{\P}^{(i)}}$ are the expectations under $\P^{(i)}$ and $\Tilde{\P}^{(i)}$, respectively. Using the uniqueness of the law of $(\Tilde{M}^{(i)}, w^{(\alpha,i)})$ with respect to $\Tilde{\P}^{(i)}$, we have
    \begin{equation*}
        \E^{\P^{(1)}} \! \Big[ F \big(w^{(\alpha,1)}(t_1),w^{(\alpha,1)}(t_2),\cdots, w^{(\alpha,1)}(t_n) \big) \Big]= \E^{\P^{(2)}} \! \Big[ F \big(w^{(\alpha,2)}(t_1),w^{(\alpha,2)}(t_2),\cdots, w^{(\alpha,2)}(t_n) \big) \Big].
    \end{equation*}
    Then we obtain the uniqueness in law of $w^{(\alpha,i)}$ with respect to $\P^{(i)}$ on $C([0,T]; L_w^2(\T^2))$.
\end{proof}

Now we will use Girsanov's transform and the mixing properties of solutions of \eqref{STE-Ito} to prove that the solution to \eqref{eq: 2D-Euler-approxiamate-1} is exponentially mixing in the averaged sense.

\begin{proof}[Proof of Theorem \ref{thm: mixing-Euler}]

    Assume that $\big(\Omega,\mathcal{F}_t,\P,w^{(\alpha)},\{W_t^{k}\}_{k \in \Z_0^2}\big)$ is an energy controlled weak solution of \eqref{eq: 2D-Euler-approxiamate-1}. Lemma \ref{lem: well-posedness 2D Euler-alpha} gives the uniqueness of the law of $w^{(\alpha)}$, so we only need to prove this weak solution has the mixing property.

    Recall the discussions in the beginning of this subsection.
    Define set function $\Tilde{\P}$ by \eqref{eq: expositional martingale} and the complex-valued Brownian motions $\{ \Tilde{W}_t^{k}\}_{k}$ by \eqref{def: new Brownian motion}. Then $\big(\Omega,\mathcal{F}_t,\Tilde{\P},w^{(\alpha)},\{\Tilde{W}_t^{k}\}_{k \in \Z_0^2}\big)$ is a solution to stochastic transport equation \eqref{STE-Ito}. Applying Theorem \ref{thm: mixing-exp}, we obtain
    \begin{equation*}
        \E^{\Tilde{\P}}\|w^{(\alpha)}(t) \|_{H^{-1}}^2 \leq C_0 \, e^{- \frac{\pi^2}{4} \kappa  \, \| \theta \|_{h^{-1}}^2 t } \, \|w^{(\alpha)}_0\|_{L^2}^{2}, \quad \forall\, t \geq 0,
    \end{equation*}
    where $\E^{\Tilde{\P}}$ is the expectation under $\Tilde{\P}$. Using H\"older's inequality and \eqref{eq: expositional martingale}, we have
    \begin{align*}
        \E \| w^{(\alpha)}(t) \|_{H^{-1}}^2 & = \E^{\Tilde{\P}} \bigg[ \| w^{(\alpha)}(t) \|_{H^{-1}}^2\, \frac{\d \P}{\d \Tilde{\P}} \Big|_{\mathcal{F}_t} \bigg] \leq \Big[ \E^{\Tilde{\P}} \| w^{(\alpha)}(t) \|_{H^{-1}}^4 \Big]^{1/2} \, \bigg[ \E^{\Tilde{\P}} \Big( \frac{\d \P}{\d \Tilde{\P}} \Big|_{\mathcal{F}_t} \Big)^2 \bigg]^{1/2} \\
        & \leq \esssup_{(\Omega, \Tilde{\P})} \| w^{(\alpha)}(t) \|_{H^{-1}} \, \big( \E^{\Tilde{\P}}\|w^{(\alpha)}(t) \|_{H^{-1}}^2 \big)^{1/2} \, \Big[ \E \exp \Big(-M_t + \frac{1}{2} [M,M]_t \Big) \Big]^{1/2}.
    \end{align*}
    Due to $w^{(\alpha)}$ is an energy controlled solution and the equivalence of $\Tilde{\P}$ and $\P$ on $\mathcal{F}_{t}$, it holds
    \begin{equation*}
        \Tilde{\P} \mbox{-a.s.}, \quad \|w^{(\alpha)}(t) \|_{H^{-1}}^2 \leq \frac{1}{4 \pi^2} \|w^{(\alpha)}(t) \|_{L^2}^2 \leq \frac{1}{4 \pi^2}\|w_0^{(\alpha)} \|_{L^2}^2.
    \end{equation*}
    Recall that $\|w_0^{(\alpha)} \|_{L^2}^2 \leq R$, then $\|w^{(\alpha)}(s) \|_{L^2}^2 \leq \|w_0^{(\alpha)} \|_{L^2}^2 \leq R $ holds $\P$-a.s. for all $s \in [0,t]$. Since $\exp\big(-M_t -\frac{1}{2} [M,M]_t \big)$ is an exponential martingale under $\P$, we obtain
    \begin{align*}
        \E \exp \Big(-M_t + \frac{1}{2} [M,M]_t \Big) & \leq \esssup_{(\Omega,\P)} \, \exp \big( [M,M]_{t} \big) \times \E \exp \Big(-M_t - \frac{1}{2} [M,M]_t \Big)  \\
        & = \esssup_{(\Omega,\P)} \, \exp \Big(\frac{K_{\alpha}^2}{4 \pi^2 \kappa } \int_{0}^{t} \|w^{(\alpha)}(s) \|_{L^2}^2\, \d s\Big) \leq \exp \Big(\frac{ K_{\alpha}^2 \, R }{4 \pi^2 \kappa } \, t \Big).
    \end{align*}
    Combining the above estimates, we have
    \begin{equation*}
        \E \| w^{(\alpha)}(t) \|_{H^{-1}}^2 \leq \frac{\sqrt{C_0}}{2 \pi} \, \exp \Big\{ \Big( \frac{ K_{\alpha}^2 \, R }{8 \pi^2 \kappa }- \frac{\pi^2}{8} \kappa  \, \| \theta \|_{h^{-1}}^2 \Big) t \Big\} \, \|w^{(\alpha)}_0\|_{L^2}^{2}.
    \end{equation*}
    For any fixed $\lambda>0$, we choose $\kappa$ big enough such that
    \begin{equation*}
        \frac{\pi^2}{8} \kappa  \, \| \theta \|_{h^{-1}}^2 = \frac{\pi^2 \kappa }{8} \, \frac{1}{K_{\alpha}^2} \sum_{l} \frac{1}{|l|^{4+2\alpha}}  \geq \frac{ K_{\alpha}^2 \, R}{8 \pi^2 \kappa } +\lambda.
    \end{equation*}
    Inserting it into the estimate of $\E \|w^{(\alpha)}(t) \|_{H^{-1}}^2$, we get the desired result.
\end{proof}

\begin{remark}
    Due to $u^{(\alpha)} = \curl^{-1} (-\Delta/(4\pi^2))^{-\alpha/2} w^{(\alpha)}$, we have
    \begin{equation*}
    \E \|u^{(\alpha)}(t) \|_{H^{\alpha}}^2 \leq C \, e^{-\lambda t} \, \|w_0^{(\alpha)}\|_{L^2}^2, \quad \forall \, t \ge 0.
    \end{equation*}
\end{remark}

\begin{remark}
The above arguments may not work for the stochastic 2D Euler equation with transport noise, namely, \eqref{eq: 2D-Euler-approxiamate-1} with $\alpha=0$. To relate the stochastic 2D Euler equation with the linear equation \eqref{STE-Ito} by Girsanov's transform, a sufficient condition is
\begin{equation*}
    \P \mbox{-a.s.}, \quad \sum_{k \in \Z_0^2} \int_{0}^{t}  \frac{|\hat{w}_k^{(\alpha)}(s)|^2}{|k|^2 \theta_k^2}  \, \d s  < +\infty.
\end{equation*}
But the existence of weak solutions with such regularity is out of reach for the moment, see e.g. Theorem \ref{thm: positive-sobolev}; further discussions can be found in \cite[Section 6]{GalLuo23}.
\end{remark}

\appendix

\section{The proof of Theorem \ref{thm: Fourier-coefficient-exp-decay} ({$d \geq 3$})} \label{appendix: proof}

\begin{proof}[Proof of Theorem \ref{thm: Fourier-coefficient-exp-decay} ($d \geq 3$)]

Recall the notations in Section \ref{subsec: setting}. Similar to the proof of Theorem \ref{thm: Fourier-coefficient-exp-decay} ($d =2$), we need the following steps to prove Theorem \ref{thm: Fourier-coefficient-exp-decay} when $d \geq 3$.

\textbf{Step 1: constructing orbits.} For any fixed $l_1 \in \mathbb{Z}_{0}^d$, there exists $l_2 \in \mathbb{Z}_{0}^d$ such that $l_1, l_2$ are linearly independent and $|l_1|=|l_2|$. For any $l_2 \in \mathbb{Z}_0^d$ with $|l_2| = |l_1|$, it is linearly independent with $l_1$ if and only if $l_2 \neq \pm l_1$. For the two linearly independent vectors $l_1,l_2 \in \Z_0^d$, we can decompose $\Z_0^d$ as $\cup_{h \in P_Z^{\perp}(l_1,l_2)} \S_h(l_1,l_2)$. For fixed $h \in P_Z^{\perp}(l_1,l_2)$, we define the quadrant $Q_{h,1}:=\mathring{Q}_{h,1} \cup \partial Q_{h,1,1} \cup \partial Q_{h,1,2} $ as follows:
\begin{align*}
    \mathring{Q}_{h,1} & =\{z=a(z) \, l_1+ b(z) \, l_2+ h \in \Z_{0}^d ; \; a(z) , \, b(z) > 0\}, \\
    \partial Q_{h,1,1} & = \{z=b(z) \, l_2+ h \in \Z_{0}^d ; \; b(z) > 0\}, \\
    \partial Q_{h,1,2} & = \{z=a(z) \, l_1+ h \in \Z_{0}^d ; \;  a(z) > 0\},
\end{align*}
where for $z\in \mathring{Q}_{h,1}$, $a(z)$ and $b(z)$ are determined by
  $$\begin{pmatrix} a(z) \\ b(z) \end{pmatrix} = \begin{pmatrix} |l_1|^2 &\, l_1\cdot l_2 \\ l_1\cdot l_2 &\, |l_2|^2 \end{pmatrix}^{-1} \! \begin{pmatrix} z\cdot l_1 \\ z\cdot l_2 \end{pmatrix}= \frac{1}{|l_1|^2|l_2|^2- (l_1 \cdot l_2)^2} \begin{pmatrix} |l_2|^2 &\, -l_1 \cdot l_2 \\ -l_1 \cdot l_2 &\, |l_1|^2 \end{pmatrix}
  \begin{pmatrix} z\cdot l_1 \\ z\cdot l_2 \end{pmatrix} . $$
For $i=2,3,4$, the other quadrants $Q_{h,i}$ can be defined similarly. Note that if $h \in \S_{h}(l_1, l_2) \cap \Z_0^d$, then $ \cup_{i=1}^4 Q_{h,i}$ is equal to $\S_{h}(l_1, l_2) \setminus \{ h \}$. Thus, we need some additional discussions in Step 4.

Now, we define two classes of orbits to cover $\mathring{Q}_{h,1}$. For any $z \in \mathring{Q}_{h,1} \cup \partial Q_{h,1,1}$, we define the first class of orbits starting from $z$ as
\begin{equation*}
     \Gamma_{h,1,1}(z,n) = z + \Big\lfloor \frac{n+1}{2} \Big\rfloor \, l_1 + \Big\lfloor \frac{n}{2} \Big\rfloor \, l_2,  \quad n \in \N,
\end{equation*}
where $\lfloor x \rfloor$ is the largest integer that is less than or equal to $x\in \R$. Similarly, for any $z \in \mathring{Q}_{h,1} \cup \partial Q_{h,1,2}$, we define the second class of orbits starting from $z$ as
\begin{equation*}
     \Gamma_{h,1,2}(z,n) = z + \Big\lfloor \frac{n}{2} \Big\rfloor \, l_1 + \Big\lfloor \frac{n+1}{2} \Big\rfloor \, l_2,  \quad n \in \N.
\end{equation*}
The orbit $\Gamma_{h,i,1}$ and $\Gamma_{h,i,2}$ on $Q_{h,i}$ can be defined similarly to the orbits on $Q_{h,1}$.

\textbf{Step 2: estimates of orbits.} Let $\O_{h,1} = \{\Gamma_{h,1,1}(z,\cdot); z \in \mathring{Q}_{h,1} \cup \partial Q_{h,1,1} \} \cup \{\Gamma_{h,1,2}(z,\cdot); z \in \mathring{Q}_{h,1} \cup \partial Q_{h,1,2} \}$ be the set of orbits on $Q_{h,1}$. We assert that the orbit $O(\cdot) \in \mathcal{O}_{h,1}$ satisfies
\begin{equation} \label{eq: estimate-orbit}
    \sum_{n \in \N} |\Pi^{\perp}_\Delta O(n)|^2 \big(Y_{O(n+1)}^{p-1}-Y_{O(n)}^{p-1}\big) \big(Y_{O(n+1)}-Y_{O(n)} \big) \geq \frac{p-1}{8 p^2 |l_1|^2 } \sum_{n \in \N} Y_{O(n)}^{p}
\end{equation}
for any $p>1$, where $\Pi^{\perp}_{\Delta} O(n)= \Pi^{\perp}_{\Delta O(n)} O(n)$ with $\Delta O(n)= O(n+1) -O(n)$. By Lemma \ref{lem: sequence sum}, to prove the estimate \eqref{eq: estimate-orbit}, we only need to prove
\begin{equation} \label{eq: estimate-orbit-project}
    |\Pi^{\perp}_{\Delta} O(n)|^2= |\Pi^{\perp}_{O(n+1)-O(n)} O(n)|^2 \geq \frac{(n+1)^2}{4 |l_1|^2}, \quad \forall \, n \in  \N.
\end{equation}
   By the construction of the orbits, we know that $\Delta O(n)$ equals $l_1$ or $l_2$. We start with proving the inequality \eqref{eq: estimate-orbit-project} for the first class of orbits $\Gamma_{h,1,1}(z,\cdot)$. When $n$ is even, we have
\begin{equation*}
    \big|\Pi^{\perp}_{\Delta} O(n) \big|^2 = \big| \Pi^{\perp}_{l_1} \Gamma_{h,1,1}(z,n) \big|^2 =\big|\Pi^{\perp}_{l_1} z \big|^2+n \, \Pi^{\perp}_{l_1} z \cdot  \Pi^{\perp}_{l_1} l_2 + \frac{n^2}{4} \big|\Pi^{\perp}_{l_1} l_2\big|^2 .
\end{equation*}
Due to $z=a(z) \, l_1+ b(z) \, l_2 +h \in \mathring{Q}_{h,1} \cup \partial Q_{h,1,1}$, we have $b(z)>0$ and
\begin{equation*}
    \Pi^{\perp}_{l_1} z \cdot  \Pi^{\perp}_{l_1} l_2   = b(z) \, \big| \Pi^{\perp}_{l_1} l_2\big|^2+ \Pi^{\perp}_{l_1} h \cdot \Pi^{\perp}_{l_1} l_2
    = b(z) \, \big| \Pi^{\perp}_{l_1} l_2 \big|^2 > 0,
\end{equation*}
where the last step follows from the fact $\Pi^{\perp}_{l_1} h \cdot \Pi^{\perp}_{l_1} l_2  = h \cdot \Pi^{\perp}_{l_1} l_2 =0$. Due to $z,l_1$, and $l_2$ belong to $\Z_0^d$, the positivity of $\Pi^{\perp}_{l_1} z \cdot  \Pi^{\perp}_{l_1} l_2$ implies
\begin{equation*}
   \Pi^{\perp}_{l_1} z \cdot \Pi^{\perp}_{l_1} l_2= \Big( z - \frac{z \cdot l_1}{|l_1|^2} l_1\Big) \cdot \Big( l_2 - \frac{l_2 \cdot l_1}{|l_1|^2} l_1\Big)  = \frac{(z \cdot l_2)|l_1|^2- (z \cdot l_1) (l_2 \cdot l_1)}{|l_1|^2} \geq \frac{1}{|l_1|^2}.
\end{equation*}
Similarly, we have $|\Pi^{\perp}_{l_1} z|^2 \geq  \frac{1}{|l_1|^2}$ and $|\Pi^{\perp}_{l_1} l_2|^2 \geq \frac{1}{|l_1|^2}$. Then, when $n$ is even,
\begin{equation*}
    |\Pi^{\perp}_{\Delta} O(n)|^2 \geq \frac{1}{|l_1|^2} \Big(1+n+\frac{n^2}{4} \Big) \geq \frac{(n+1)^2}{4|l_1|^2 }.
\end{equation*}
For odd $n\in \N$, similar to the estimates when $n$ is even, we have
\begin{align}
    |\Pi^{\perp}_{\Delta} O(n)|^2 & = \big|\Pi^{\perp}_{l_2} z\big|^2 + (n+1) \, \Pi^{\perp}_{l_2} z \cdot \Pi^{\perp}_{l_2} l_1 + \frac{(n+1)^2}{4} \big| \Pi^{\perp}_{l_2} l_1\big|^2 \nonumber \\
    & \geq \frac{(n+1)^2}{4} \big| \Pi^{\perp}_{l_2} l_1\big|^2 \geq  \frac{(n+1)^2}{4|l_2|^2 }. \label{eq: odd-estimate}
\end{align}
By the same arguments as in the proof of $d=2$, Lemma \ref{lem: sequence sum} and the above estimates yield that the inequality \eqref{eq: estimate-orbit} holds for all orbits on the quadrant $Q_{h,i}$, where $i=1,2,3,4$.

\textbf{Step 3: covering $\S_{h}(l_1,l_2)\setminus \{h\}$ exactly twice.} Let $I_{h,1,1}$ and $I_{h,1,2}$ be
\begin{align*}
       I_{h,1,1} & = \{z \in \mathring{Q}_{h,1} ; \, z-l_2 \notin Q_{h,1} \} \cup \partial Q_{h,1,1}, \\
       I_{h,1,2} & = \{z \in \mathring{Q}_{h,1} ; \, z-l_1 \notin Q_{h,1} \} \cup \partial Q_{h,1,2}.
\end{align*}
Let $\Bar{\O}_{h,1} = \{\Gamma_{h,1,1}(z,\cdot); z \in I_{h,1,1} \} \cup \{\Gamma_{h,1,2}(z,\cdot); z \in I_{h,1,2} \}$ be the set of all the orbits starting from $I_{h,1,1}$ and $I_{h,1,2}$. Similarly, we can define the orbits space $\Bar{\O}_{h,i}$ for each $i=2,3,4$. By the same arguments as in the proof of $d=2$, we know that the orbits in $\cup_{i=1}^4 \Bar{\O}_{h,i}$ cover $\S_{h}(l_1,l_2)\setminus \{h\}$ exactly twice.

\textbf{Step 4: the additional step.} As mentioned in Step 1, when $h \in \S_{h}(l_1, l_2) \cap \Z_0^d$, we need an additional step. To take $h$ into account, we define the new orbits as follows:
\begin{align*}
    O_{h,1}(n) := h + \Big\lfloor \frac{n}{2} \Big\rfloor \, l_1 + \Big\lfloor \frac{n+1}{2} \Big\rfloor \, l_2,  \quad \forall n \in \N; \\
    O_{h,2}(n) := h + \Big\lfloor \frac{n+1}{2} \Big\rfloor \, l_1 + \Big\lfloor \frac{n}{2} \Big\rfloor \, l_2,  \quad \forall n \in \N.
\end{align*}
Then $O_{h,1}(1+\cdot)$ and $ O_{h,2}(1+\cdot)$ belong to $\Bar{\O}_{h,1}$. We define
\begin{equation*}
    \Bar{\O}_{h,1}^{\ast}:= \{ O_{h,1}(\cdot), \, O_{h,2}(\cdot) \} \cup \Bar{\O}_{h,1} \setminus \{ O_{h,1}(1+\cdot), \, O_{h,2}(1+\cdot) \}.
\end{equation*}
Then $\cup_{i=2}^4 \Bar{\O}_{h,i} \cup \Bar{\O}_{h,1}^{\ast}$ cover $\S_{h}(l_1,l_2)$ exactly twice. We need to establish a new estimate for $O_{h,1}$ and $O_{h,2}$ like \eqref{eq: estimate-orbit}. For orbit $O_{h,1}$, when $n$ is even, we have
\begin{align*}
    |\Pi^{\perp}_{\Delta} O_{h,1}(n)|^2 =|h|^2+ \frac{n^2}{4} \big|\Pi^{\perp}_{l_1} l_2\big|^2 \geq  \frac{1}{|l_1|^2} \Big( 1+\frac{n^2}{4} \Big) \geq \frac{(n+1)^2}{5|l_1|^2 }.
\end{align*}
When $n$ is odd, the estimate is similar to \eqref{eq: odd-estimate}. So for any $n \in \N$, we have $|\Pi^{\perp}_{\Delta} O_{h,1}(n)|^2 \geq \frac{(n+1)^2}{5 |l_1|^2}$ and similarly $|\Pi^{\perp}_{\Delta} O_{h,2}(n)|^2 \geq \frac{(n+1)^2}{5 |l_1|^2}$. By Lemma \ref{lem: sequence sum}, for $i=1,2$, we have
\begin{equation*}
    \sum_{n \in \N} |\Pi^{\perp}_\Delta O_{h,i}(n)|^2 \big(Y_{O_{h,i}(n+1)}^{p-1}-Y_{O_{h,i}(n)}^{p-1}\big) \big(Y_{O_{h,i}(n+1)}-Y_{O_{h,i}(n)} \big) \geq \frac{p-1}{10 p^2 |l_1|^2 } \sum_{n \in \N} Y_{O_{h,i}(n)}^{p}.
\end{equation*}

\textbf{Step 5: the final step.} By the same arguments as in the proof of $d = 2$, we obtain
\begin{align}
        & \sum_{k } \, \sum_{l=l_1,l_2,-l_1,-l_2 } \theta_{l}^2 \, \big|\Pi^{\perp}_{l} k \big|^2  \big(Y_{k+l}^{p-1}-Y_{k}^{p-1} \big) \big(Y_{k+l}-Y_{k} \big) \nonumber \\
       & \quad = \sum_{h \in P_{Z}^{\perp}(l_1,l_2)} \sum_{k \in \S_{h}(l_1,l_2) } \, \sum_{l=l_1,l_2,-l_1,-l_2 } \theta_{l}^2 \, \big|\Pi^{\perp}_{l} k \big|^2  \big(Y_{k+l}^{p-1}-Y_{k}^{p-1} \big) \big(Y_{k+l}-Y_{k} \big) \nonumber \\
       & \quad \geq \frac{2(p-1)}{5 p^2 } \frac{\theta_{l_1}^2}{|l_1|^2} \sum_{h \in P_{Z}^{\perp}(l_1,l_2)} \sum_{k \in \S_h(l_1,l_2)} Y_{k}^{p}= \frac{2(p-1)}{5 p^2 } \frac{\theta_{l_1}^2}{|l_1|^2} \sum_{k } Y_{k}^{p}. \label{eq: estimate-for-gronwall-1}
\end{align}

For any fixed $l_0 \in \Z_0^d$, there are many choices of $l_1$ and $l_2$ such that $|l_1|=|l_2|=|l_0|$ and $l_1,l_2$ are linearly independent. We define the set of all the pairs $(l_1,l_2)$ as
\begin{equation*}
    L_{|l_0|}:= \big\{ (l_1,l_2) \in \Z_0^d \otimes \Z_0^d; \, \mbox{$l_1$ and $l_2$ are linearly independent and } |l_1|=|l_2|=|l_0| \big\}.
\end{equation*}
Let $N_{|l_0|}=\# \{l \in \Z_0^d ; \, |l|=|l_0| \}$. By the discussion in Step 1, we get $\# L_{|l_0|}= N_{|l_0|} (N_{|l_0|}-2) $.

For any fixed $l_0 \in \Z_0^d$, the inequality \eqref{eq: estimate-for-gronwall-1} gives
\begin{align*}
    & \sum_{k} \, \sum_{ \substack{l \in \Z_0^d \\ |l|=|l_0|} } \theta_{l}^2 \, \big|\Pi^{\perp}_{l} k \big|^2  \big(Y_{k+l}^{p-1}-Y_{k}^{p-1} \big) \big(Y_{k+l}-Y_{k} \big) \\
    &= \frac{1}{4 \,  (N_{|l_0|}-2 )}  \sum_{k} \, \sum_{(l_1,l_2) \in L_{|l_0|}} \sum_{l=l_1,l_2,-l_1,-l_2} \theta_{l}^2 \, \big|\Pi^{\perp}_{l} k \big|^2  \big(Y_{k+l}^{p-1}-Y_{k}^{p-1} \big) \big(Y_{k+l}-Y_{k} \big) \\
    & \geq \frac{1}{4 \, (N_{|l_0|}-2 )} \sum_{(l_1,l_2) \in L_{|l_0|}}  \frac{2(p-1)}{5 p^2 }  \frac{\theta_{l_1}^2}{|l_1|^2}  \sum_{k } Y_{k}^{p} = \frac{p-1}{10 p^2 } \sum_{ \substack{l \in \Z_0^d \\ |l|=|l_0|} } \frac{\theta_{l}^2}{|l|^2} \sum_{k } Y_{k}^{p}.
\end{align*}
Then by Lemma \ref{lem: sum-Fourier-ODE} and Gr\"onwall inequality, we get inequality \eqref{eq: Fourier-coefficient-exp-decay}.
\end{proof}

\section*{Acknowledgements}

The first author is grateful to the National Key R\&D Program of China (No. 2020YFA0712700), the National Natural Science Foundation of China (Nos. 11931004, 12090010, 12090014) and the Youth Innovation Promotion Association, CAS (Y2021002). The second author is grateful to the National Natural Science Foundation of China (No.12231002). The third author is grateful to the National Natural Science Foundation of China (Nos. 12288201, 12271352, 12201611).

\end{document}